\newtheorem{thm}{Theorem}[section]
\newtheorem{lem}[thm]{Lemma}
\newtheorem{prop}[thm]{Proposition}
\newtheorem{defn}[thm]{Definition}
\newtheorem{alg}[thm]{Algorithm}
\newcommand{\Z}{\mathbb{Z}}
\numberwithin{equation}{section}
\begin{document}
 
\title{All trees are seven-cordial}

\author{Keith Driscoll}
\address{Keith Driscoll (\tt keithdriscoll@clayton.edu)}

\date{\today}

\begin{abstract}
For any integer $k>0$, a tree $T$ is $k$-cordial if there exists a labeling of the vertices of $T$ by $\mathbb{Z}_k$, inducing edge-weights as the sum modulo $k$ of the labels on incident vertices to a given edge, which furthermore satisfies the following conditions: 
\begin{enumerate}
\item  Each label appears on at most one more vertex than any other label.
\item Each edge-weight appears on at most one more edge than any other edge-weight.
\end{enumerate}
Mark Hovey (1991) conjectured that all trees are $k$-cordial for any integer $k$. Cahit (1987) had shown earlier that all trees are $2$-cordial and Hovey proved that all trees are $3,4,$ and $5$-cordial. Driscoll, \textit{et. al} (2017), used an adjustment to Hovey's test to show that all trees are $6$-cordial.  It is shown here that all trees are $7$-cordial by that same adjustment.
\\[\baselineskip] 
	2010 Mathematics Subject Classification: 05C78
\\[\baselineskip]
	Keywords: graph labeling, cordial, $k$-cordial
\end{abstract}

\maketitle

\section{Introduction}
All graphs will be finite and simple. For basic graph theoretic notation and definitions, we refer the reader to D. West \cite{West}. For a survey of graph labeling problems and results, see Gallian \cite{Gallian}. 

For any tree $T$ and any integer $k>0$, a \emph{k-cordial} labeling of $T$ is a function $f:V(T)\rightarrow \Z_k$ inducing an edge-weighting also denoted by $f$, defined by $f(uv)=f(u)+f(v) \pmod k$ for any edge $(uv)$ of $T$, which satisfy the following conditions:
\begin{enumerate}[label=(\roman*)]
\item Each label appears on at most one more vertex than any other label.
\item Each weight appears on at most one more edge than any other edge-weight.
\end{enumerate}

In other words, if for any $a\in \Z_k$ we define $v_a$ and $e_a$ as the number of vertices and edges, respectively, which are labeled $a$, then the above conditions can be rewritten as
\begin{enumerate}[label=(\roman*)]
\item $|v_a-v_b|\leq 1$ for any distinct $a,b\in \Z_k$
\item $|e_a-e_b|\leq 1$ for any distinct $a,b\in \Z_k$
\end{enumerate}

Cahit \cite{Cahit} was the first to define $2$-cordial labelings (which he called cordial) as a simplification of graceful and harmonious labelings. Motivated by the Graceful Tree Conjecture of Rosa \cite{Rosa} and the Harmonious Tree Conjecture (HTC) of Graham and Sloane \cite{GrahamSloane}, he showed that all trees are $2$-cordial. The extension of the definition to groups is due to Hovey \cite{Hovey} who also showed that all trees are $3,4,$ and $5$-cordial. Hovey conjectured that trees are $k$-cordial for all $k$ and that all graphs are $3$-cordial. More recently, Driscoll, Krop, and Nguyen \cite{Driscoll} showed that all trees are $6$-cordial.  Cichacz, G\"orlich, and Tuza \cite{CGT} have extended this problem to hypergraphs while Pechenik and Wise \cite{PW} considered the existence of cordial labelings for the smallest non-cyclic group $V_4$.

It should be noted that a solution to Hovey's first conjecture implies the HTC.

\section{Definitions and Facts}

The following two simple properties can be found in \cite{GrahamSloane}.

\begin{lem}\label{add}
For any $k>0$, if $f$ is $k$-cordial labeling of a graph $G$, then $f+a$ is a $k$-cordial labeling of $G$ for any $a\in \mathbb{Z}_k$.
\end{lem}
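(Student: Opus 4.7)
The plan is to verify the two cordiality conditions directly by tracking how the vertex-count vector $(v_0,\dots,v_{k-1})$ and the edge-count vector $(e_0,\dots,e_{k-1})$ transform when we replace $f$ by $g := f+a$, and to observe that in each case the transformation is merely a reindexing (a cyclic permutation) of the multiset of counts. Since conditions (i) and (ii) depend only on the multiset of counts (being statements about the maximum gap between entries), the conditions are preserved.

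First I would handle the vertex condition. A vertex $v$ satisfies $g(v)=b$ if and only if $f(v)=b-a$, so writing $v_b(g)$ and $v_b(f)$ for the respective vertex counts we have $v_b(g)=v_{b-a}(f)$ for every $b\in\Z_k$. Hence $\{v_b(g):b\in\Z_k\}$ and $\{v_b(f):b\in\Z_k\}$ are the same multiset, so $|v_a(g)-v_b(g)|\leq 1$ for all $a\ne b$ follows immediately from the corresponding inequality for $f$.

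Next I would handle the edge condition. For any edge $uv$, the induced weight under $g$ is
\[
g(uv) \;=\; g(u)+g(v) \;=\; (f(u)+a)+(f(v)+a) \;=\; f(uv)+2a \pmod{k}.
\]
Thus $e_b(g)=e_{b-2a}(f)$ for every $b\in\Z_k$, so again the multiset of edge-counts is simply reindexed, and condition (ii) is inherited from $f$.

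There is no genuine obstacle here; the only thing to keep in mind is the factor of $2$ appearing in the edge-weight shift (so that the shift for edges is by $2a$ rather than $a$), which is harmless because a cyclic reindexing of the counts by any amount preserves the relevant inequalities. Combining the two parts yields that $g=f+a$ is a $k$-cordial labeling, completing the proof.
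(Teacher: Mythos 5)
Your proof is correct: the key observations that $v_b(f+a)=v_{b-a}(f)$ and $e_b(f+a)=e_{b-2a}(f)$, together with the fact that translation by any element (in particular by $2a$) is a bijection of $\mathbb{Z}_k$ and hence only reindexes the count multisets, are exactly what is needed. The paper itself gives no proof of this lemma (it cites Graham and Sloane), and your direct verification is the standard argument one would supply; the only cosmetic issue is that you reuse the symbol $a$ both for the shift and as a dummy index in condition (i).
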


\begin{lem}\label{unitmult}
For any $k>0$, if $f$ is $k$-cordial labeling of a graph $G$, then $-f$ is a $k$-cordial labeling of $G$.
\end{lem}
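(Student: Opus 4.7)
The plan is to show that replacing $f$ by $-f$ merely permutes the vertex-label and edge-weight counts among the elements of $\Z_k$. Since the cordiality conditions require only that these counts be nearly equal (differing by at most $1$), any permutation of them automatically preserves the conditions.

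Concretely, I would let $v'_a$ and $e'_a$ denote the vertex and edge counts of the labeling $-f$. A vertex $u$ carries label $a$ under $-f$ exactly when $f(u) = -a$, so $v'_a = v_{-a}$ for every $a \in \Z_k$. For an edge $uv$, the induced weight under $-f$ is
\[
(-f)(u) + (-f)(v) \equiv -\bigl(f(u)+f(v)\bigr) \equiv -f(uv) \pmod{k},
\]
so an edge has weight $a$ under $-f$ exactly when it had weight $-a$ under $f$, giving $e'_a = e_{-a}$.

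Because negation is a bijection $\Z_k \to \Z_k$, the families $\{v'_a\}_{a\in\Z_k}$ and $\{e'_a\}_{a\in\Z_k}$ are reindexings of $\{v_a\}$ and $\{e_a\}$. The hypothesis that $f$ is $k$-cordial gives $|v_a - v_b| \leq 1$ and $|e_a - e_b| \leq 1$ for all $a,b \in \Z_k$, and these inequalities are invariant under reindexing, so $-f$ inherits the cordiality conditions. There is essentially no obstacle here; the lemma follows directly from the fact that $x \mapsto -x$ is an automorphism of the additive group $\Z_k$, mirroring the role of the translation $x \mapsto x+a$ used in Lemma \ref{add}.
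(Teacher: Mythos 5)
Your proof is correct and is the standard argument: the paper itself gives no proof of this lemma, merely citing Graham and Sloane, and your observation that negation is a group automorphism of $\Z_k$ permuting both the label counts ($v'_a = v_{-a}$) and the weight counts ($e'_a = e_{-a}$) is exactly the reasoning the citation relies on.
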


\begin{defn}
A \emph{caterpillar} is a tree $T$ such that for a maximum path $P$, all vertices are of distance at most one from $P$.
\end{defn}

\begin{defn}
A \emph{lobster} is a tree $T$ such that for a maximum path $P$, all verticies are of distance at most two from $P$.
\end{defn}

The next result can be found in \cite{Hovey}.

\begin{thm}\label{cat}
Caterpillars are $k$-cordial for all $k>0$.
\end{thm}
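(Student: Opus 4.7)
My plan is to label the caterpillar by sweeping along its spine $P = v_1 v_2 \cdots v_n$, committing at each step to the label $c_i := f(v_i)$ of the current spine vertex together with labels for all of its pendant leaves. By Lemma \ref{add} I may set $c_1 = 0$ without loss of generality. The driving observation is that once $c_i$ has been fixed, giving a leaf at $v_i$ the label $a \in \Z_k$ contributes $1$ to the vertex count $v_a$ and simultaneously $1$ to the edge-weight count $e_{a+c_i}$. So assigning labels to the $\ell_i := |L_i|$ leaves at $v_i$ amounts to picking a multiset $M_i$ of size $\ell_i$ from $\Z_k$; this adds $M_i$ to the vertex tally $V = (v_a)$ and the shifted copy $M_i + c_i$ to the edge tally $E = (e_a)$, while the spine edge $v_i v_{i+1}$ adds $1$ to $e_{c_i+c_{i+1}}$.

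The main inductive step would be to show that for any ``admissible'' state after processing $v_1, \ldots, v_i$, I can choose $c_{i+1}$ and $M_{i+1}$ so that the state after processing $v_{i+1}$ is again admissible, where admissible means the tallies $V$ and $E$ are within the cordial tolerance relative to the vertices and edges committed so far. In the flexible regime $\ell_{i+1} \ge k$, the multiset $M_{i+1}$ affords enough freedom to absorb any reasonable imbalance in $V$ while, via the shift by $c_{i+1}$, also correcting $E$. In the tight regime $\ell_{i+1}=0$, the only degree of freedom is $c_{i+1} \in \Z_k$, and I would invoke a pigeonhole argument over the $k$ possible shifts to find a choice that preserves admissibility. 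Lemma \ref{unitmult} can be brought in at the end to realign the tallies if a single label class is overrepresented by the final count.

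The main obstacle is the coupling between the vertex and edge tallies: any correction to $V$ via a leaf label is simultaneously a correction to $E$ at a position shifted by $c_i$, so the two tallies cannot be adjusted independently. This rules out a naive greedy strategy and forces a more carefully engineered invariant, one that tracks not only that $V$ and $E$ are nearly balanced but also precisely which imbalance patterns are recoverable by a single move $(c_{i+1}, M_{i+1})$. Verifying that this refined invariant is preserved across the boundary cases, most notably at the terminal vertex $v_n$ (where there is no next spine vertex to absorb a correction) and at spine vertices with zero or very few leaves, will be the technical heart of the argument.
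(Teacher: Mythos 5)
What you have written is a plan, not a proof: the key objects are never constructed and the key step is never verified. You do not define the ``admissible'' invariant, you do not prove the inductive step in either regime, and you explicitly defer the hardest parts (``I would invoke a pigeonhole argument,'' ``will be the technical heart of the argument''). The obstacle you correctly identify --- that choosing a leaf label $a$ at $v_i$ perturbs $v_a$ and $e_{a+c_i}$ simultaneously, so the two tallies cannot be corrected independently --- is precisely what a greedy left-to-right sweep cannot obviously overcome, and nothing in the proposal overcomes it. A concrete warning sign: for a bare path (a caterpillar with no leaves, your ``tight regime'' at every step) the natural sweep $c_i=i$ produces weights $2i+1$, which for even $k$ only ever hit odd residues; so local freedom in $c_{i+1}$ alone is not enough to keep $E$ balanced, and a per-step pigeonhole argument will not rescue this without a much stronger invariant that you have not formulated.

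The paper's proof (Grace's sequential labeling, as used by Hovey) avoids the coupling problem entirely by abandoning the spine order: draw the caterpillar as a planar bipartite graph with parts $A$ and $B$, label all of $A$ by consecutive integers top to bottom, then all of $B$ by the next consecutive integers, and reduce mod $k$. The vertex labels are then $|A|+|B|$ consecutive integers, hence balanced mod $k$, and --- this is the point your approach misses --- the planarity of the drawing forces the edge weights, read off in the natural order, to also be consecutive integers, hence balanced mod $k$. Both cordiality conditions become immediate, with no invariant to maintain and no boundary cases at the ends of the spine. If you want to salvage your approach you would need to state and prove a preserved invariant strong enough to handle long leafless stretches of spine, which is substantially harder than the two-line verification the sequential labeling affords.
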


The proof of the above theorem is obtained by the sequential labeling of Grace \cite{Grace}. We include its description since we use variants of this labeling throughout the paper.

\begin{alg}{$k$-cordial labelings of caterpillars}

Given a caterpillar $T$, draw $T$ as a planar bigraph with partite sets $A$ and $B$. Choose any nonnegative integer $\ell$ and label the vertices of $A$ sequentially by $\ell, \dots, \ell+|A|$, starting at the top (bottom). Next label the vertices of $B$ sequentially by $\ell+|A|+1, \dots, \ell +|A|+|B|$ starting at the top (bottom). Reduce all labels modulo $k$ to obtain a $k$-cordial labeling.

\end{alg}

The following is consequence of Grace's algorithm:

\begin{prop}\label{fact2}
If $T$ is a rooted caterpillar with $k$ vertices (not counting the root) for some positive integer $k$, longest path $P$, and root $r$ at distance $1$ from an endpoint of $P$, then $T$ is $k$-cordial with every weight appearing exactly once.
\end{prop}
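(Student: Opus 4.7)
The plan is to apply Grace's sequential labeling (the algorithm described just after Theorem \ref{cat}) directly to $T$. Since $T$ is a tree on $k+1$ vertices, it has exactly $k$ edges, matching the order of $\Z_k$. I would draw $T$ as a planar bigraph with partite sets $A$ and $B$ and run the algorithm with starting value $\ell = 0$, so the vertices receive the integers $0, 1, \dots, k$, each exactly once, before reduction modulo $k$.

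After reduction, the label $k$ collapses to $0$, so $v_0 = 2$ and $v_a = 1$ for every other $a \in \Z_k$, which immediately gives condition (i) of $k$-cordiality. For the edge weights, the key property of Grace's sequential labeling on a caterpillar is that all induced edge weights are distinct (this is the strength underlying the proof of Theorem \ref{cat}); because there are exactly $k$ edges and $|\Z_k| = k$, \emph{distinct} forces each element of $\Z_k$ to appear precisely once as an edge weight. This is both condition (ii) and the stronger refinement claimed in the proposition.

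There is essentially no obstacle to this argument: it reduces entirely to the correctness of Grace's algorithm together with the count of vertices and edges. The hypothesis that the root $r$ lies at distance one from an endpoint of $P$ plays no role in establishing $k$-cordiality itself; rather, it is a structural condition specifying that $r$ is a peripheral vertex of $T$, which is the form in which the rooted caterpillar will be glued to larger trees through $r$ in later constructions of the paper.
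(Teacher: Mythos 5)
Your argument does prove something true, but not the statement that the paper needs, and the step where you declare the hypothesis on the root irrelevant is exactly where the gap lies. This proposition lives in the rooted framework: the $k$ vertices in the count exclude the root, because when $T$ is pasted onto a larger tree $T_0$ the root is identified with an already-labeled vertex of $T_0$, so its label is prescribed (by Lemma \ref{add} one may take it to be $0$), and what must be shown is that the $k$ \emph{non-root} vertices can then be labeled with each element of $\Z_k$ exactly once while every weight appears exactly once. If you run Grace's algorithm starting from an arbitrary extreme vertex of a part, as you propose, the two vertices receiving $0$ and $k\equiv 0$ are the first and last in the sequential order; generically neither of them is the root, so among the non-root vertices the label $0$ appears twice and the root's label appears zero times. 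That multiset is useless for the pasting argument (and even violates the balance condition of Hovey's rooted definition, where $|v_i-v_j|\le 1$ is required over the non-root vertices).

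The paper's proof avoids this by starting the sequential labeling \emph{at the root} with label $0$, so that the non-root vertices receive $1,2,\dots,k$, i.e., each residue exactly once, with the only repeated label being $0$ on the root and on the final vertex of the other part. Starting at the root without destroying the consecutive-weights property of Grace's algorithm is only possible if the root is the first or last vertex of its part in the planar bipartite drawing, and that is precisely what the hypothesis ``$r$ at distance $1$ from an endpoint of $P$'' guarantees (it forces $r$ to be the penultimate spine vertex, hence extremal in its part). So the hypothesis you dismissed as playing ``no role'' is the load-bearing one; your weight computation (sequential labels give $k$ consecutive weights on $k$ edges, hence all of $\Z_k$ once) is fine, but the vertex-label bookkeeping must be redone relative to the root.
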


\begin{proof}
We label the vertices of $T$ by drawing the caterpillar as a bipartite graph and applying Grace's algorithm begining by labeling the root $0$. Since the root is either the first or last vertex of its part, proceed in Grace's algorithm by labeling the rest of the vertices in the part sequentially. The only repeated label will be zero on the final vertex in the part not containing the root. Since the labeling is sequential, all weights are represented.
\end{proof}

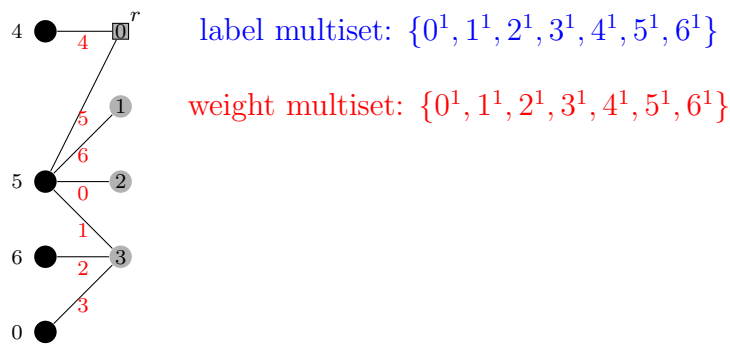
\begin{figure}[ht]
\begin{center}
\begin{tikzpicture}[scale=1]
\tikzstyle{vert}=[circle,fill=black,inner sep=3pt]
\tikzstyle{overt}=[circle,fill=black!30, inner sep=3pt]
\tikzstyle{root}=[rectangle,draw,fill=black!30,inner sep=3pt]

  \node[vert, label=left:\tiny{0}] (u9) at (2.5,-6) {};
  \node[vert, label=left:\tiny{6}] (u1) at (2.5,-5) {};
  \node[overt, label=center:\tiny{3}] (u2) at (3.5,-5) {};
  \node[vert, label=left:\tiny{5}] (u3) at (2.5,-4) {};
  \node[root, label=center:\tiny{0}] (u4) at (3.5,-2) {};
  \node[overt, label=center:\tiny{2}] (u6) at (3.5,-4) {};  
  \node[overt, label=center:\tiny{1}] (u7) at (3.5,-3) {};
  \node[vert, label=left:\tiny{4}] (u8) at (2.5,-2) {};

  \draw[color=black] 
   (u1)--(u2)--(u3)--(u4)
   (u6)--(u3) (u7)--(u3) (u8)--(u4) (u2)--(u9)
;

\node [color=black] at (3.70,-1.80){\tiny{$r$}};

  \node [color=red] at (3,-5.15) {\tiny{2}};
  \node [color=red] at (3,-4.65) {\tiny{1}};
  \node [color=red] at (3,-4.15) {\tiny{0}};
  \node [color=red] at (3,-3.65) {\tiny{6}};
  \node [color=red] at (3,-3.15) {\tiny{5}};
  \node [color=red] at (3,-2.15) {\tiny{4}};
  \node [color=red] at (3,-5.65) {\tiny{3}};

\node at (8,-2){\color{blue}label multiset: $\{0^1,1^1,2^1,3^1,4^1, 5^1,6^1\}$};
\node at (8,-3){\color{red}weight multiset: $\{0^1,1^1,2^1,3^1,4^1,5^1,6^1\}$};

\end{tikzpicture}
\caption{Rooted Caterpillar as in Proposition \ref{fact2}}
\end{center}
\end {figure}

The following two facts can be easily verified.

\begin{prop}\label{fact1}
Every tree on at most six vertices is a caterpillar.

\end{prop}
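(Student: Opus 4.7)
My plan is to argue by contradiction using a longest path. Suppose $T$ has at most six vertices but is not a caterpillar, and fix a longest path $P = v_0 v_1 \cdots v_d$ in $T$. By the failure of the caterpillar condition there must exist a vertex $u$ at distance at least $2$ from $P$; let $u = u_0, u_1, \ldots, u_t = v_i$ be the unique $u$-to-$P$ path, where $v_i$ is the first (and only) vertex of $P$ that this path visits. By the distance assumption, $t \geq 2$, and by the choice of $v_i$ the vertices $u_0, \ldots, u_{t-1}$ are disjoint from $P$.

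The main step is to squeeze $d$ from below using the maximality of $P$ in two different ways. First, $v_i$ cannot be an endpoint of $P$: if $i=0$, then prepending $u_{t-1}, u_{t-2}, \ldots, u_0$ to $P$ would produce a path of length $d+t > d$, contradicting maximality; the case $i=d$ is symmetric. Hence $1 \leq i \leq d-1$. Second, the concatenated path $u_0 u_1 \cdots u_t v_{i+1} v_{i+2} \cdots v_d$ has length $t + (d-i)$, which must be at most $d$, so $t \leq i$; and symmetrically $t \leq d-i$. Adding these inequalities gives $d \geq 2t \geq 4$.

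Counting vertices then finishes the argument: $T$ contains the $d+1 \geq 5$ vertices of $P$ together with the $t \geq 2$ off-path vertices $u_0, \ldots, u_{t-1}$, for a total of at least $7$, contradicting $|V(T)| \leq 6$. No step should present real difficulty; the only subtlety is being careful that $u_0, \ldots, u_{t-1}$ are genuinely disjoint from $P$ so that they are counted separately, which is built into choosing $v_i$ as the \emph{first} point of contact. The argument also pinpoints the smallest non-caterpillar tree, namely the seven-vertex spider obtained by subdividing each edge of $K_{1,3}$ exactly once, which is the extremal case $d=4$, $t=2$.
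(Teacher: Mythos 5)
Your argument is correct. The paper offers no proof at all for this proposition --- it is dismissed with ``can be easily verified,'' presumably meaning a direct inspection of the (few) isomorphism classes of trees on at most six vertices. Your longest-path argument is a genuine, and more informative, alternative: the two inequalities $t\leq i$ and $t\leq d-i$ obtained from the maximality of $P$ give $d\geq 2t\geq 4$, and the vertex count $|V(T)|\geq (d+1)+t\geq 7$ then shows that \emph{every} non-caterpillar tree has at least seven vertices, with the spider obtained by subdividing each edge of $K_{1,3}$ realizing the extremal case. One small remark: your first step (showing $v_i$ is not an endpoint of $P$) is subsumed by the second, since $t\leq i$ and $t\leq d-i$ with $t\geq 2$ already force $2\leq i\leq d-2$; but this is a redundancy, not an error. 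The disjointness of $u_0,\dots,u_{t-1}$ from $P$ is handled correctly by taking $v_i$ to be the first vertex of $P$ on the $u$--$P$ path.
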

\begin{prop}\label{fact2}
Every tree on at most seven vertices is either a caterpillar or a lobster.
\end{prop}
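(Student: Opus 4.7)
The plan is to analyze a longest path $P = v_0 v_1 \cdots v_d$ in the tree $T$ and show that no vertex of $T$ can lie at distance more than $2$ from $P$; this directly gives that $T$ is a lobster (and in particular, $T$ is a caterpillar or a lobster).

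First I would establish the standard observation that the endpoints $v_0$ and $v_d$ of a longest path are necessarily leaves: any additional neighbor of $v_0$ (respectively $v_d$) would give a path strictly longer than $P$, contradicting maximality. Consequently, for any vertex $v \notin V(P)$, the unique vertex of $P$ closest to $v$ in $T$, call it $v_i$, satisfies $1 \leq i \leq d-1$.

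Next, letting $k$ denote the distance from $v$ to $P$ (so the path from $v$ to $v_i$ has length $k$), I would concatenate this path with the two subpaths of $P$ from $v_i$ to $v_0$ and from $v_i$ to $v_d$, producing paths in $T$ of lengths $k+i$ and $k+(d-i)$. Maximality of $P$ forces both to be at most $d$, so
\[
k \leq \min(i,\, d-i).
\]
If $k \geq 3$, this forces $i \geq 3$ and $d-i \geq 3$, hence $d \geq 6$, so $P$ has at least $7$ vertices. Since $|V(T)| \leq 7$, this would force $T = P$, contradicting the existence of a vertex $v \notin V(P)$. Therefore every vertex lies within distance $2$ of $P$, and $T$ is a caterpillar or lobster.

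Rather than an obstacle, the potential pitfall is purely one of bookkeeping: making sure the endpoints-are-leaves observation is cleanly applied so that $v_i$ is an interior vertex of $P$, and using the tight cardinality bound $|V(T)| \leq 7$ at the exact moment the argument needs $d \geq 6$. Proposition \ref{fact1} (trees on at most six vertices are caterpillars) is not strictly needed for the argument, but could be invoked to handle the trivial case when $|V(T)| \leq 6$ separately if one wished to restrict attention to the case $|V(T)| = 7$.
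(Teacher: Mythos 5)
Your argument is correct and complete: the inequality $k \leq \min(i, d-i)$ obtained from maximality of the longest path, combined with the cardinality bound $|V(T)| \leq 7$, rules out any vertex at distance $3$ or more from $P$, and the endpoints-are-leaves observation correctly guarantees the attachment point $v_i$ is interior. The paper offers no proof of this proposition (it is stated as an easily verified fact alongside Proposition \ref{fact1}), so there is nothing to compare against; your write-up supplies a clean, rigorous verification of exactly the kind the author presumably had in mind.
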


Hovey \cite{Hovey} defined $A$-cordiality of rooted forests for any abelian group $A$ and studied the particular case when $A$ is cyclic. In all but one instance, we apply his definition for the special case when the rooted forest is a rooted tree. Note that in the following definition, the root set is not considered a subset of the vertex set of the rooted forest. Thus, a rooted forest $F$ of order $k$ has $k$ vertices and a set of roots $R$ that are not the vertices of $F$ and where the size of $R$ is the same as the number of components of $F$. 

\begin{defn}
For any integer $k>0$, a rooted forest $F$ with vertex set $V$, edge set $E$, and root set $R$ is $k$-cordial if for every labeling $g:R\rightarrow \Z_k$ and every $\ell\in \Z_k$, there is a function $f:V\rightarrow \Z_k$ satisfying 
\begin{enumerate}
\item $|v_i - v_j|\leq 1$ for all $i,j\in \Z_k$ 
\item $|e_i - e_j|\leq 1$ for all $i,j\in \Z_k$ where neither $i$ nor $j$ is $\ell$
\item $0\leq e_{\ell}-e_i\leq 2$ for all $i\in \Z_k$
\end{enumerate}
\end{defn}

\begin{thm}[Hovey \cite{Hovey}]\label{test}
For any $k>0$, if all trees and rooted forests with $k$ vertices are $k$-cordial, then all trees are $k$-cordial.
\end{thm}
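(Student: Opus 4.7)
The plan is to proceed by induction on $n = |V(T)|$. The base cases, where $n \leq k$, can be handled directly: either $n < k$ (where trees are small enough that a caterpillar-based or ad hoc argument applies, invoking Theorem~\ref{cat}) or $n = k$ (where the hypothesis on trees with $k$ vertices is exactly what is required).

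For the inductive step, suppose $n > k$ and that every tree on fewer than $n$ vertices is $k$-cordial. I would decompose $T$ into a subtree $T'$ on $n - k$ vertices (to be labeled by induction) and a rooted forest $F$ on exactly $k$ vertices (to be labeled via the rooted forest hypothesis). Such a decomposition always exists: in any tree one can greedily grow a connected subgraph $T'$ of any prescribed size from any starting vertex, and setting $S := V(T) \setminus V(T')$, the induced subgraph $T[S]$ is a forest in which each component is attached to $T'$ by exactly one edge. Together with these attachment edges, $S$ carries the natural structure of a rooted forest $F$ with root set $R \subset V(T')$, $|V(F)| = k$, and $|E(F)| = k$.

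Next, I would invoke the inductive hypothesis to obtain a $k$-cordial labeling $f_1$ of $T'$, set $g := f_1|_R$, and choose $\ell \in \mathbb{Z}_k$ to be an edge-weight occurring with minimum multiplicity under $f_1$. The hypothesis on rooted forests with $k$ vertices then produces a labeling $f_2 \colon V(F) \to \mathbb{Z}_k$ satisfying conditions (1)--(3) for this $g$ and $\ell$. Define $f := f_1 \cup f_2$ as the candidate labeling of $T$.

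The main obstacle is verifying $k$-cordiality of $f$. Vertex counts are immediate: condition (1) together with $|V(F)| = k$ and $|\mathbb{Z}_k| = k$ forces $v_a(F) = 1$ for every $a$, so $v_a(T) = v_a(T') + 1$ inherits cordiality from $f_1$. For edge weights, analysis of (2) and (3) shows that $f_2$ can yield only one of two distributions on $F$: \emph{uniform}, with $e_a(F) = 1$ for all $a$; or \emph{bumped}, with $e_\ell(F) = 2$, $e_{\ell'}(F) = 0$ for some $\ell' \neq \ell$, and the remaining $e_a(F) = 1$. The uniform case combines seamlessly with $f_1$ to yield cordiality of $T$. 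The bumped case requires $\ell'$ to be an over-represented weight of $f_1$; arranging this, by judicious choice of $\ell$ among the under-represented weights and exploiting any remaining freedom afforded by condition (3), is the combinatorial crux of the proof.
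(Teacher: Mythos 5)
The paper does not actually prove this theorem --- it is quoted from Hovey --- so your proposal can only be measured against the way the peeling strategy is deployed in the paper's main argument, and that comparison exposes a genuine gap at exactly the point you label ``the combinatorial crux.'' Your decomposition is fine (a connected $T'$ on $n-k$ vertices plus a rooted forest $F$ on $k$ vertices and $k$ edges always exists), and your classification of the admissible weight distributions on $F$ as \emph{uniform} or \emph{bumped} is correct. The problem is the bumped case for general $n$. Write $n-1=qk+r$ with $0\le r<k$; then $T'$ has $(q-1)k+r$ edges, so it has $k-r$ minority edge-weights. The combined labeling is cordial in the bumped case only if $\ell$ is a minority weight of $T'$ \emph{and} the zeroed weight $\ell'$ is a majority weight of $T'$. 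You control $\ell$, but the definition of a $k$-cordial rooted forest only asserts the \emph{existence} of some labeling for the given $(g,\ell)$; it gives you no control over $\ell'$. Whenever $T'$ has two or more minority weights (i.e.\ whenever $n\not\equiv 0\pmod k$), the forest labeling may zero out a second minority weight, leaving $e_\ell - e_{\ell'}=2$ in $T$, and no ``judicious choice of $\ell$ among the under-represented weights'' can prevent this, since for every admissible $\ell$ the adversarial labeling can pick a bad $\ell'$. Translation or negation of the forest labeling does not help either: it would disturb the prescribed root labels, and it preserves the offending difference.

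This obstruction is precisely why the actual arguments are structured differently. The induction peels a $k$-vertex rooted forest only from trees of order divisible by $k$, where $T'$ has a \emph{unique} minority weight, so $\ell'$ is automatically a majority weight and the bumped case is harmless; the intermediate orders $mk+j$ are then reached by Lemma~\ref{hovey} (leaf attachment, covering $j\le\lfloor k/2\rfloor+1$) and, in this paper, by separate peelings of $5$- and $6$-vertex rooted structures (Propositions~\ref{split5} and~\ref{split6}) equipped with stronger guarantees, such as two labelings with disjoint sets of minority weights so that one of them always avoids the minority weight of $T_0$. A secondary issue: your base case for $n<k$ is not covered by the hypothesis and is not trivial, since for $k\ge 8$ there are non-caterpillar trees on fewer than $k$ vertices, so ``a caterpillar-based or ad hoc argument'' does not suffice as stated. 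To repair the proposal you would need to restrict the $k$-peeling to orders $\equiv 0\pmod k$ and supply the missing machinery for the other residues.
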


\begin{lem}[Hovey \cite{Hovey}]\label{hovey}
If all trees on $mk$ vertices are $k$-cordial, then so are all trees $T$ with $mk\leq |T|\leq mk + \lfloor\frac{k}{2}\rfloor+1$.
\end{lem}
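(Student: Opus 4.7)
The plan is to proceed by induction on $r := |T| - mk$, taking the base case $r = 0$ from the hypothesis. For the inductive step with $1 \le r \le \lfloor k/2 \rfloor + 1$, I pick any leaf $v \in T$ with neighbor $u$, apply the inductive hypothesis to $T - v$ (on $mk + r - 1$ vertices, which still lies in the claimed range) to obtain a $k$-cordial labeling $f$, and extend by choosing $f(v) = a$ for a suitable $a \in \Z_k$.

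Writing $p = f(u)$, the value $a$ must satisfy two conditions, one for vertex-count balance and one for edge-count balance. Since $T - v$ has $mk + r - 1$ vertices and $mk + r - 2$ edges, each with max-min $\le 1$ on its counts, the distributions in $T - v$ are essentially forced: for $r \ge 2$ there are $r - 1$ labels at $m + 1$ and $r - 2$ weights at $m + 1$, with the rest at $m$; while for $r = 1$ all labels lie at $m$ but one weight $\ell$ sits at $m - 1$. Routine bookkeeping then shows that the set $A_1 \subseteq \Z_k$ of labels keeping the vertex balance of $T$ satisfied has $|A_1| = k - r + 1$, while the set $A_2 \subseteq \Z_k$ of choices of $a$ keeping the edge balance satisfied has $|A_2| = k - r + 2$ when $r \ge 2$, and reduces to the single point $\{\ell - p\}$ when $r = 1$.

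The case $r = 1$ is immediate since $A_1 = \Z_k$ contains the unique element of $A_2$. For $r \ge 2$, inclusion-exclusion gives
\[
|A_1 \cap A_2| \ge (k - r + 1) + (k - r + 2) - k = k - 2r + 3,
\]
and the hypothesis $r \le \lfloor k/2 \rfloor + 1$ is precisely the condition needed to force $k - 2r + 3 \ge 1$. Any $a$ in the intersection then extends $f$ to a $k$-cordial labeling of $T$.

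The one thing to handle with care is the case split in enumerating the forced distributions on $T - v$ and $T$, in particular the qualitative jump at $r = 1$ where the edge distribution dips to $m - 1$ rather than rising to $m + 1$. Past that, the argument is a short pigeonhole computation in which the tight bound $\lfloor k/2 \rfloor + 1$ emerges transparently.
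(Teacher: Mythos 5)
Your argument is correct and is essentially the paper's own proof in expanded form: both peel off a leaf, invoke induction (or the hypothesis) on the smaller tree, and then count the labels permitted by vertex balance against the weights forbidden by edge balance; your inclusion--exclusion bound $|A_1|+|A_2|-k = k-2r+3 \ge 1$ is exactly the paper's condition $k-j > j-1$ with $j = r-1$. Your explicit treatment of the $r=1$ case (where the pendant edge's weight is forced to be the unique deficient weight $\ell$) is a welcome clarification of the paper's terser ``when $j=0$ any label is allowed and there is a choice for a weight.''
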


\begin{proof}
Attaching a leaf to a tree with $mk+j$ vertices allows for $k-j$ labels on the pendant vertex and forbids $j-1$ weights on the pendant edge, when $j>0$. Such a labeling exists whenever $k-j>j-1$. When $j=0$ any label is allowed and there is a choice for a weight.
\end{proof}

\begin{defn}
For any tree $T$ we say $T$ is \emph{split} at roots $v_1,\dots, v_m$ if $T$ can be written as a union of a tree $T_0$ and rooted trees $T_1, \dots, T_m$, for some positive integer $m$,  with roots $v_1,\dots, v_m$, so that for any distinct $i,j\in \{0,\dots, m\}, T_i \cap T_j$ is either $v_i$ or $v_j$.
\end{defn}

\begin{figure}[ht]
\begin{center}
\begin{tikzpicture}[scale=1]
\tikzstyle{vertex}=[circle, draw, inner sep=0pt, minimum size=6pt]
\tikzstyle{vert}=[circle,fill=black,inner sep=3pt]
\tikzstyle{overt}=[circle,fill=black!30, inner sep=3pt]
\tikzstyle{root}=[rectangle,fill=black,inner sep=3pt]

  \node[vertex, label=above:\tiny{v}] (r) at (2,3) {};
  \node[vertex, label=left:\tiny{}](u1) at (1,2){};
  \node[vertex, label=left:\tiny{}](u2) at (2,2){};
  \node[vertex, label=left:\tiny{}](u3) at (3,2){};
  \node[vertex, label=left:\tiny{}](u4) at (1,1){};
  \node[vertex, label=left:\tiny{}](u5) at (2,1){};
  \node[vertex, label=left:\tiny{}](u6) at (3,1){};
  \node[vertex, label=left:\tiny{}](u7) at (2,0){};

  \node[vertex, label=left:\tiny{}](v1) at (0,3){};
  \node[vertex, label=left:\tiny{}](v2) at (1,3){};
  \node[vertex, label=left:\tiny{}](v3) at (3,3){};
  \node[vertex, label=left:\tiny{}](v4) at (4,3){};

 \draw[color=black] 
   (r)--(u1)--(u4) (r)--(u2)--(u5)--(u7) (r)--(u3)--(u6)
   (v1)--(v2)--(r)--(v3)--(v4);

\node at (1,4) {$T$};

\node at (5,2.5) {$\longrightarrow$};

\node at (6,4) {$T_0$};

\node at (6,1) {$T_1$};

  \node[root, label=above:\tiny{$v_1$}] (v) at (9,2) {};
  \node[vertex, label=left:\tiny{}](u1) at (8,1){};
  \node[vertex, label=left:\tiny{}](u2) at (9,1){};
  \node[vertex, label=left:\tiny{}](u3) at (10,1){};
  \node[vertex, label=left:\tiny{}](u4) at (8,0){};
  \node[vertex, label=left:\tiny{}](u5) at (9,0){};
  \node[vertex, label=left:\tiny{}](u6) at (10,0){};
  \node[vertex, label=left:\tiny{}](u7) at (9,-1){};

  \node[vertex, label=above:\tiny{v}] (r) at (9,4) {};
  \node[vertex, label=left:\tiny{}](v1) at (7,4){};
  \node[vertex, label=left:\tiny{}](v2) at (8,4){};
  \node[vertex, label=left:\tiny{}](v3) at (10,4){};
  \node[vertex, label=left:\tiny{}](v4) at (11,4){};

 \draw[color=black] 
   (v)--(u1)--(u4) (v)--(u2)--(u5)--(u7) (v)--(u3)--(u6)
   (v1)--(v2)--(r)--(v3)--(v4);

\end{tikzpicture}
\caption{Splitting a Tree}
\end{center}

\end {figure}
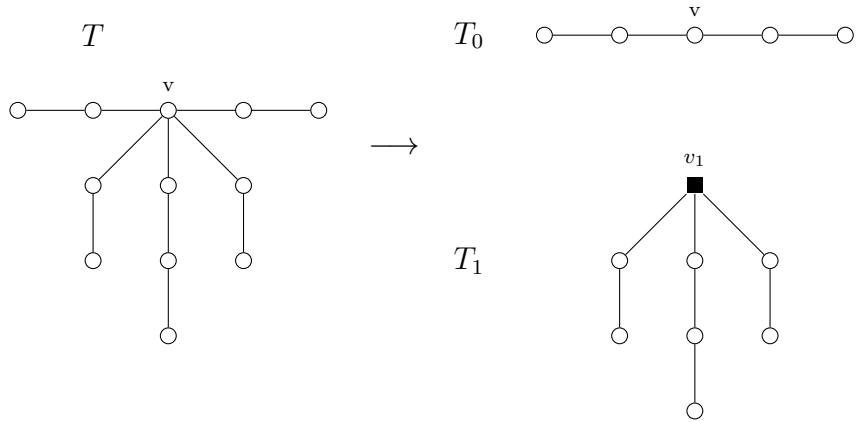

Note that when splitting a tree, one has the choice of placing branches starting at the root, in $T_0$ or in a rooted tree. 

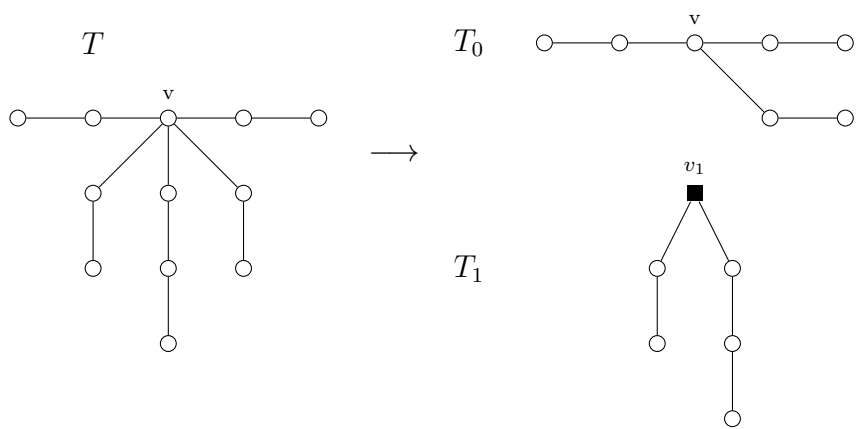
\begin{figure}[ht]
\begin{center}
\begin{tikzpicture}[scale=1]
\tikzstyle{vertex}=[circle, draw, inner sep=0pt, minimum size=6pt]
\tikzstyle{vert}=[circle,fill=black,inner sep=3pt]
\tikzstyle{overt}=[circle,fill=black!30, inner sep=3pt]
\tikzstyle{root}=[rectangle,fill=black,inner sep=3pt]

  \node[vertex, label=above:\tiny{v}] (r) at (2,3) {};
  \node[vertex, label=left:\tiny{}](u1) at (1,2){};
  \node[vertex, label=left:\tiny{}](u2) at (2,2){};
  \node[vertex, label=left:\tiny{}](u3) at (3,2){};
  \node[vertex, label=left:\tiny{}](u4) at (1,1){};
  \node[vertex, label=left:\tiny{}](u5) at (2,1){};
  \node[vertex, label=left:\tiny{}](u6) at (3,1){};
  \node[vertex, label=left:\tiny{}](u7) at (2,0){};

  \node[vertex, label=left:\tiny{}](v1) at (0,3){};
  \node[vertex, label=left:\tiny{}](v2) at (1,3){};
  \node[vertex, label=left:\tiny{}](v3) at (3,3){};
  \node[vertex, label=left:\tiny{}](v4) at (4,3){};

 \draw[color=black] 
   (r)--(u1)--(u4) (r)--(u2)--(u5)--(u7) (r)--(u3)--(u6)
   (v1)--(v2)--(r)--(v3)--(v4);

\node at (1,4) {$T$};

\node at (5,2.5) {$\longrightarrow$};

\node at (6,4) {$T_0$};

\node at (6,1) {$T_1$};

  \node[root, label=above:\tiny{$v_1$}] (v) at (9,2) {};
  \node[vertex, label=left:\tiny{}](u1) at (8.5,1){};
  \node[vertex, label=left:\tiny{}](u2) at (9.5,1){};
  \node[vertex, label=left:\tiny{}](u4) at (8.5,0){};
  \node[vertex, label=left:\tiny{}](u5) at (9.5,0){};
  \node[vertex, label=left:\tiny{}](u7) at (9.5,-1){};

  \node[vertex, label=above:\tiny{v}] (r) at (9,4) {};
  \node[vertex, label=left:\tiny{}](v1) at (7,4){};
  \node[vertex, label=left:\tiny{}](v2) at (8,4){};
  \node[vertex, label=left:\tiny{}](v3) at (10,4){};
  \node[vertex, label=left:\tiny{}](v4) at (11,4){};
  \node[vertex, label=left:\tiny{}](w1) at (10,3){};
  \node[vertex, label=left:\tiny{}](w2) at (11,3){};

 \draw[color=black] 
   (v)--(u1)--(u4) (v)--(u2)--(u5)--(u7)  (r)--(w1)--(w2)
   (v1)--(v2)--(r)--(v3)--(v4);

\end{tikzpicture}
\caption{Another Splitting}
\end{center}

\end {figure}

\pagebreak

\begin{prop}\label{split5}
Every tree on at least $5$ vertices can be split into a tree and either
\begin{enumerate}[label=(\roman*)]
\item a rooted tree with five vertices  
\item the rooted tree taken from $\{T1, T2, T3, T4\}$ 
\item the rooted forest $F1$
\end{enumerate}
\end{prop}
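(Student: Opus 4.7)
The plan is to root $T$ at an arbitrary vertex $r$ and to peel off a small rooted subtree from a deep part of $T$. When $|T|\geq 6$, take $v$ to be a deepest vertex for which the subtree $T_v$ rooted at $v$ has at least six vertices; such a $v$ exists since $|T_r|=|T|\geq 6$. By this deepest choice, every child $c$ of $v$ satisfies $|T_c|\leq 5$. Writing $n_1,\ldots,n_k$ for the subtree sizes of the children $c_1,\ldots,c_k$ of $v$, we have $n_i\leq 5$ and $\sum_i n_i=|T_v|-1\geq 5$. A short argument shows that any split of $T$ at root $v_1=v$ leaving $T_0$ connected must take $V(T_1)=\{v\}\cup\bigcup_{c_i\in S}T_{c_i}$ for some $S\subseteq\{c_1,\ldots,c_k\}$: if $u\in V(T_1)\setminus\{v\}$ had a descendant $w\notin V(T_1)$, then the removed vertex $u$ would sit on the unique $w$--$v$ path in $T_0$, disconnecting it.

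The first step is to realize option~(i) via a subset-sum argument: if some $S$ satisfies $\sum_{c_i\in S}n_i=5$, then $T_1=\{v\}\cup\bigcup_{c_i\in S}T_{c_i}$ is a rooted subtree with exactly five non-root vertices and $T_0=T-(V(T_1)\setminus\{v\})$ is a tree, yielding option~(i). An enumeration of the ways to write $5$ as a sum of positive integers shows that the multisets $\{n_i\}\subseteq\{1,2,3,4,5\}$ with $\sum n_i\geq 5$ admitting \emph{no} subset of sum $5$ are exactly those avoiding a $5$, a $\{1,4\}$-pair, a $\{2,3\}$-pair, $\{1,1,3\}$, $\{1,2,2\}$, $\{1,1,1,2\}$, and five or more $1$'s. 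Up to these exclusions, the obstruction multisets are those consisting of (a) only $3$'s and $4$'s, (b) only $2$'s and $4$'s, or (c) a single $1$ together with two or more $3$'s.

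For each exceptional multiset I would then analyze the possible local structure of $T_v$ and match the split to option~(ii) or~(iii). Because each $T_{c_i}$ with $|T_{c_i}|\leq 4$ is a caterpillar by Proposition~\ref{fact1} of highly restricted shape, the small exceptional subtrees $T_v$ coincide, after rooting at $v$, with one of the prescribed rooted trees $T1,T2,T3,T4$. The larger exceptional families, in which $|T_v|$ can be arbitrarily large (e.g.\ the all-$4$'s family $\{4,4,4,\ldots\}$), are handled by a multi-root split that takes several of the $c_i$ as simultaneous roots, producing the rooted forest $F1$. Finally, the residual edge case $|T|=5$ is handled separately: $T$ is a caterpillar on five vertices by Proposition~\ref{fact1}, and a trivial split in which $T_0$ consists of a single chosen leaf yields a $T_1$ of four vertices matching one of $T1,\ldots,T4$.

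The main obstacle is the exceptional case analysis, especially verifying that every large obstruction multiset admits a consistent multi-root split matching the single forest $F1$ regardless of the specific shapes of the individual $T_{c_i}$, and checking the finitely many small exceptional shapes against $T1,\ldots,T4$. The useful leverage is the tight bound $|T_{c_i}|\leq 4$ available in the exceptional cases, which restricts each child subtree to a short explicit list of shapes.
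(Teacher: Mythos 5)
Your overall strategy---locate a vertex $v$ at which the hanging subtrees have bounded size, try to assemble exactly five vertices from them, and fall back on the special shapes $T1$--$T4$ and $F1$ when that fails---is the same idea the paper implements with a longest path and its ``critical pairs,'' just organized by subtree sizes from an arbitrary root instead of by position along the path. Your subset-sum enumeration of the obstruction multisets is correct. The difficulty is that the entire content of the proposition then sits in the exceptional analysis, which you defer, and which does not go through as sketched.

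Two concrete failures. First, options (ii) and (iii) are rigid in a way your reduction ignores. Each of $T1,\dots,T4$ has exactly four non-root vertices, so realizing (ii) requires some vertex of $T$ to carry a union of branches of total size exactly $4$ and of the right shape; in the obstruction multiset $\{3,3\}$ the branch-unions at $v$ have sizes $3$ and $6$ only, so no candidate exists at $v$, and the exceptional $T_v$ cannot ``coincide'' with $T1,\dots,T4$, which are strictly smaller than it. Second, $F1$ is not a generic two-component forest: it is a pendant path on two vertices together with a vertex-disjoint pendant path on three vertices, so shape information (not just the size multiset) is essential. Take $v$ carrying one pendant path $v$--$c_1$--$x$--$y$ and one pendant $2$-leaf star $c_2$ with leaves $z_1,z_2$: the child sizes are $\{3,3\}$, no branch-union at $v$ has size $4$ or $5$ once $|T|\geq 13$, and the only pendant $2$-path ($\{x,y\}$ below $c_1$) overlaps the only pendant $3$-path ($\{c_1,x,y\}$ below $v$), so neither (ii) nor (iii) can be extracted from this part of the tree. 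Your framework, committed to the single vertex $v$, has no mechanism for this; and attaching two such gadgets to a common vertex appears to defeat every option of the proposition at once, so you should test the statement itself on such a tree before investing further---this is also exactly the configuration on which the paper's own one-line claim for the $(3,4)$ critical pair is least transparent. Two smaller slips: a split at $v$ may also use the component of $T-v$ containing the parent of $v$, so your structural claim about splits at $v$ is incomplete; and the base case $|T|=5$ is not settled by deleting a leaf, since $P_5$ rooted at an end is a pendant $4$-path, not one of $T1,\dots,T4$ (root $P_5$ at its centre to obtain $T1$).
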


\begin{figure}[ht]
\begin{center}
\begin{tikzpicture}[scale=.75]
\tikzstyle{vertex}=[circle, draw, inner sep=0pt, minimum size=6pt]
\tikzstyle{vert}=[circle,fill=black,inner sep=3pt]
\tikzstyle{overt}=[circle,fill=black!30, inner sep=3pt]
\tikzstyle{root}=[rectangle,fill=black,inner sep=3pt]

  \node[root, label=above:\tiny{}] (r) at (2,3) {};
  \node[vertex, label=left:\tiny{}](u1) at (1,2){};
  \node[vertex, label=left:\tiny{}](u3) at (3,2){};
  \node[vertex, label=left:\tiny{}](u4) at (1,1){};
  \node[vertex, label=left:\tiny{}](u6) at (3,1){};

 \draw[color=black] 
   (r)--(u1)--(u4)  (r)--(u3)--(u6);

\node at (1,4) {$T1$};

  \node[root, label=above:\tiny{}] (r) at (5,3) {};
  \node[vertex, label=left:\tiny{}](u1) at (5,2){};
  \node[vertex, label=left:\tiny{}](u2) at (4,1){};
  \node[vertex, label=left:\tiny{}](u3) at (5,1){};
  \node[vertex, label=left:\tiny{}](u4) at (6,1){};

 \draw[color=black] 
   (r)--(u1)--(u2) (u1)--(u3) (u1)--(u4);

\node at (4.5,4) {$T2$};

  \node[root, label=above:\tiny{}] (r) at (8,3) {};
  \node[vertex, label=left:\tiny{}](u1) at (8,2){};
  \node[vertex, label=left:\tiny{}](u2) at (8,1){};
  \node[vertex, label=left:\tiny{}](u3) at (7,0){};
  \node[vertex, label=left:\tiny{}](u4) at (9,0){};

 \draw[color=black] 
   (r)--(u1)--(u2)--(u3) (u2)--(u4);

\node at (7.5,4) {$T3$};

  \node[root, label=above:\tiny{}] (r) at (10,3) {};
  \node[vertex, label=left:\tiny{}](u1) at (10,2){};
  \node[vertex, label=left:\tiny{}](u2) at (10,1){};
  \node[vertex, label=left:\tiny{}](u3) at (11,1){};
  \node[vertex, label=left:\tiny{}](u4) at (10,0){};

 \draw[color=black] 
   (r)--(u1)--(u2)--(u4) (u1)--(u3);

\node at (10,4) {$T4$};

  \node[root, label=above:\tiny{}] (r1) at (1,-1) {};
  \node[vertex, label=left:\tiny{}](u1) at (1,-2){};
  \node[vertex, label=left:\tiny{}](u2) at (1,-3){};

  \node[root, label=left:\tiny{}](r2) at (2,-1){};
  \node[vertex, label=left:\tiny{}](u4) at (2,-2){};
  \node[vertex, label=left:\tiny{}](u5) at (2,-3){};
  \node[vertex, label=left:\tiny{}](u6) at (2,-4){};


 \draw[color=black] 
   (r1)--(u1)--(u2)  (r2)--(u4)--(u5)--(u6);

\node at (1,-4) {$F1$};

\end{tikzpicture}
\end{center}

\end {figure}

\begin{proof}
Let $T$ be a tree on at least $5$ vertices and let $P=\{v_0,v_1,\dots, v_p\}$ be a longest path in $T$. We attempt to split $T$ at $v_i$ for $i=1,2,3,4,5$. Notice that if $T$ cannot be split to a rooted tree with $5$ vertices, then for some $i$, $1\leq i \leq 4$, there is a split at $v_i$ that produces a rooted tree that has less than $5$ vertices, and every split at $v_{i+1}$ produces a rooted tree with more than $5$ vertices. We call such a pair of vertices with indices $(i,i+1)$ \emph{critical}, such a split at $v_i$ \emph{deficient} and such a split at $v_{i+1}$ \emph{excessive}.

We only need consider critical pairs of vertices with indices $(2,3)$, $(3,4)$, and $(4,5)$. For the $(2,3)$ critical pair, it is easy to see that one can produce the rooted trees $T1$ and $T2$ by splitting at $v_2$. For the $(3,4)$ critical pair, we can produce $T3$ and $T4$ by splitting at $v_3$. For the $(4,5)$ critical pair, one can produce rooted forest $F1$ by splitting at $v_4$.
\end{proof}


\begin{prop}\label{split6}
Every tree on at least $6$ vertices can be split into a tree and either
\begin{enumerate}[label=(\roman*)]
\item a rooted tree with six vertices  
\item the rooted tree taken from $\{T5, T6, T7, T8\}$ 
\item the rooted forest taken from $\{F2, F3, F4, F5\}$
\end{enumerate}
\end{prop}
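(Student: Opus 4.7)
The plan is to mimic the proof of Proposition~\ref{split5}, with the target split size raised from $5$ to $6$. Let $T$ be a tree on at least six vertices, fix a longest path $P = v_0 v_1 \cdots v_p$, and for each $i \in \{1, 2, \ldots, 6\}$ in turn attempt to split $T$ at $v_i$ by assigning a subset of the branches at $v_i$ (other than the one toward $v_p$) to the rooted tree $T_1$. If some choice yields $|V(T_1)| = 6$, then case (i) of the proposition holds and we are done.

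Otherwise, as in the preceding proposition, there must exist a critical pair $(i, i+1)$ with $2 \leq i \leq 5$: some split at $v_i$ gives a rooted tree on fewer than six vertices (deficient), while every split at $v_{i+1}$ gives a rooted tree on more than six vertices (excessive). Since the transition from $v_i$ to $v_{i+1}$ merely absorbs $v_i$ together with its remaining side branches, the critical-pair condition translates into tight inequalities on the cumulative counts $i + S_1 + \cdots + S_i$, where $S_j$ denotes the total size of the side branches at $v_j$, together with the non-realizability of the gap value as a subset sum of the side-branch sizes at $v_{i+1}$. This produces a short list of possible local configurations near $v_i$ and $v_{i+1}$.

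I would then treat the four critical pairs $(2,3), (3,4), (4,5), (5,6)$ in turn. In each case, the longest-path assumption forces every side branch at $v_j$ to have depth at most $\min(j, p-j)$, which cuts the enumeration down to finitely many branch shapes. I expect the $(2,3)$ and $(3,4)$ pairs to yield the rooted trees $T5$--$T8$ as $T_1$, obtained by splitting at $v_i$ and bundling the backward path with a forced side branch, while I expect the $(4,5)$ and $(5,6)$ pairs to yield the rooted forests $F2$--$F5$, obtained by splitting $T$ at two vertices simultaneously so that two rooted components together absorb branches that a single split cannot separate cleanly.

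The main obstacle will be the breadth of the case analysis. Compared to Proposition~\ref{split5}, the target size increases by one, the range of critical indices expands, and the allowable depth of side branches at interior vertices grows from $1$ up to $2$; this creates many more candidate branch shapes at $v_2, v_3, v_4$. The $(3,4)$ and $(4,5)$ critical pairs, where multiple nontrivial side branches may coexist on different vertices of the path, will require the most careful bookkeeping to verify that the lists $T5$--$T8$ and $F2$--$F5$ are exhaustive.
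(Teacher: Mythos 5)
Your overall strategy is the paper's: fix a longest path, observe that failure to split off a piece of the target size forces a critical pair $(i,i+1)$ with a deficient split at $v_i$ and excessive splits at $v_{i+1}$, bound the side-branch depths by the longest-path property, and enumerate the resulting local configurations. However, your predicted correspondence between critical pairs and outcomes is not just unverified but structurally impossible in several cases, and the enumeration itself --- which is the entire content of the proposition --- is never carried out. Concretely: you expect the $(2,3)$ and $(3,4)$ pairs to produce the rooted trees $T5$--$T8$, but $T6$, $T7$, and $T8$ each contain a path of length four descending from the root, so they can only be cut off at $v_4$ or later; and you expect the forests to come only from the $(4,5)$ and $(5,6)$ pairs, whereas $F3$, $F4$, $F5$ each consist of a depth-$3$ component of four vertices together with a rooted path of two, which is exactly the shape of a split at $v_3$. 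The paper's actual breakdown is: no $(2,3)$ critical pair can occur; $(3,4)$ pairs yield $T5$, $F3$, $F4$, or $F5$ (splitting at $v_3$); $(4,5)$ pairs yield $F2$, $T6$, $T7$, or $T8$ (splitting at $v_4$); and $(5,6)$ pairs yield $F2$ (splitting at $v_5$). Your plan would also miss the nonexistence of $(2,3)$ critical pairs, which requires an argument (a jump past size six at $v_2$ forces a large star-like branch at $v_1$ or $v_2$, from which a six-vertex rooted subtree can be extracted by re-rooting), rather than producing trees from that case.

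Beyond the misassignment, the proposal explicitly defers the case analysis (``I would then treat\dots'', ``I expect\dots''), so the exhaustiveness of the lists $\{T5,\dots,T8\}$ and $\{F2,\dots,F5\}$ --- the only nontrivial claim in the proposition --- is not established. As written, this is a plan whose announced execution would terminate with the wrong lists; to repair it you would need to redo the bookkeeping of which subtree shapes of each depth can hang behind $v_2$ through $v_6$, and you would discover the assignment stated in the paper's proof of Proposition~\ref{split6}.
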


\begin{figure}[ht]
\begin{center}
\begin{tikzpicture}[scale=.5]
\tikzstyle{vertex}=[circle, draw, inner sep=0pt, minimum size=6pt]
\tikzstyle{vert}=[circle,fill=black,inner sep=3pt]
\tikzstyle{overt}=[circle,fill=black!30, inner sep=3pt]
\tikzstyle{root}=[rectangle,fill=black,inner sep=3pt]

  \node[root, label=above:\tiny{}] (r) at (2,7) {};
  \node[vertex, label=left:\tiny{}](u1) at (1,6){};
  \node[vertex, label=left:\tiny{}](u3) at (3,6){};
  \node[vertex, label=left:\tiny{}](u4) at (1,5){};
  \node[vertex, label=left:\tiny{}](u6) at (3,5){};
  \node[vertex, label=left:\tiny{}](u7) at (3,4){};

 \draw[color=black] 
   (r)--(u1)--(u4)  (r)--(u3)--(u6)--(u7);

\node at (1,8) {$T5$};

  \node[root, label=above:\tiny{}] (r) at (6,7) {};
  \node[vertex, label=left:\tiny{}](u1) at (6,6){};
  \node[vertex, label=left:\tiny{}](u2) at (6,5){};
  \node[vertex, label=left:\tiny{}](u3) at (6,4){};
  \node[vertex, label=left:\tiny{}](u4) at (6,3){};
  \node[vertex, label=left:\tiny{}](u5) at (7,5){};

 \draw[color=black] 
   (r)--(u1)--(u2)--(u3)--(u4) (u1)--(u5);

\node at (5,8) {$T6$};

  \node[root, label=above:\tiny{}] (r) at (10,7) {};
  \node[vertex, label=left:\tiny{}](u1) at (10,6){};
  \node[vertex, label=left:\tiny{}](u2) at (10,5){};
  \node[vertex, label=left:\tiny{}](u3) at (10,4){};
  \node[vertex, label=left:\tiny{}](u4) at (10,3){};
  \node[vertex, label=left:\tiny{}](u5) at (11,4){};

 \draw[color=black] 
   (r)--(u1)--(u2)--(u3)--(u4) (u2)--(u5);

\node at (9,8) {$T7$};

  \node[root, label=above:\tiny{}] (r) at (14,7) {};
  \node[vertex, label=left:\tiny{}](u1) at (14,6){};
  \node[vertex, label=left:\tiny{}](u2) at (14,5){};
  \node[vertex, label=left:\tiny{}](u3) at (14,4){};
  \node[vertex, label=left:\tiny{}](u4) at (14,3){};
  \node[vertex, label=left:\tiny{}](u5) at (15,3){};

 \draw[color=black] 
   (r)--(u1)--(u2)--(u3)--(u4) (u3)--(u5);

\node at (13,8) {$T8$};

  \node[root, label=above:\tiny{}] (r1) at (3,2) {};
  \node[vertex, label=left:\tiny{}](u1) at (3,1){};
  \node[vertex, label=left:\tiny{}](u2) at (3,0){};

  \node[root, label=left:\tiny{}](r2) at (1,2){};
  \node[vertex, label=left:\tiny{}](u4) at (2,1.25){};
  \node[vertex, label=left:\tiny{}](u5) at (1,.5){};
  \node[vertex, label=left:\tiny{}](u6) at (2,-.25){};
  \node[vertex, label=left:\tiny{}](u7) at (1,-1){};

 \draw[color=black] 
   (r1)--(u1)--(u2)  (r2)--(u4)--(u5)--(u6)--(u7);

\node at (1,-2) {$F2$};

  \node[root, label=above:\tiny{}] (r) at (6,2) {};
  \node[vertex, label=left:\tiny{}](u3) at (6,1){};
  \node[vertex, label=left:\tiny{}](u4) at (5,-1){};
  \node[vertex, label=left:\tiny{}](u6) at (6,0){};
  \node[vertex, label=left:\tiny{}](u7) at (7,-1){};

 \draw[color=black] 

  (r)--(u3)--(u6)--(u7) (u6)--(u4);

  \node[root, label=above:\tiny{}] (r1) at (7,2) {};
  \node[vertex, label=left:\tiny{}](u1) at (7,1){};
  \node[vertex, label=left:\tiny{}](u2) at (7,0){};

 \draw[color=black] 
   (r1)--(u1)--(u2) ;

\node at (5,-2) {$F3$};

  \node[root, label=above:\tiny{}] (r) at (10,2) {};
  \node[vertex, label=left:\tiny{}](u1) at (10,1){};
  \node[vertex, label=left:\tiny{}](u2) at (9,0){};
  \node[vertex, label=left:\tiny{}](u3) at (10,0){};
  \node[vertex, label=left:\tiny{}](u4) at (10,-1){};

 \draw[color=black] 

  (r)--(u1)--(u3)--(u4) (u1)--(u2);

  \node[root, label=above:\tiny{}] (r1) at (11,2) {};
  \node[vertex, label=left:\tiny{}](u1) at (11,1){};
  \node[vertex, label=left:\tiny{}](u2) at (11,0){};

 \draw[color=black] 
   (r1)--(u1)--(u2) ;

\node at (9,-2) {$F4$};

  \node[root, label=above:\tiny{}] (r) at (14,2) {};
  \node[vertex, label=left:\tiny{}](u1) at (13,1){};
  \node[vertex, label=left:\tiny{}](u2) at (15,1){};
  \node[vertex, label=left:\tiny{}](u3) at (13,0){};
  \node[vertex, label=left:\tiny{}](u4) at (15,0){};

 \draw[color=black] 

  (r)--(u1)--(u3) (r)--(u2)--(u4);

  \node[root, label=above:\tiny{}] (r1) at (16,2) {};
  \node[vertex, label=left:\tiny{}](u1) at (16,1){};
  \node[vertex, label=left:\tiny{}](u2) at (16,0){};

 \draw[color=black] 
   (r1)--(u1)--(u2) ;

\node at (13,-2) {$F5$};

\end{tikzpicture}

\end{center}

\end{figure}

\begin{proof}
The argument is similar to that in Proposition \ref{split5}. Let \linebreak$P=\{v_0,\dots, v_p\}$ be a longest path of $T$ and consider critical pairs with indices $(2,3),(3,4),(4,5),$ and $(5,6)$. There can be no critical pair with indices $(2,3)$. Critical pairs with indices $(3,4)$ produce either $T5, F3, F4,$ or $F5$ when splitting at $v_3$. Critical pairs with indices $(4,5)$  produce $F2, T6, T7,$ or $T8$ when splitting the tree at $v_4$. Critical pairs with indices $(5,6)$ produce $F2$ when splitting the tree at $v_5$.

\end{proof}

\begin{prop}\label{split7}
Every tree on at least $7$ vertices can be split into a tree and either
\begin{enumerate}[label=(\roman*)]
\item a rooted tree with seven vertices  
\item the rooted tree on 6 vertices taken from $\{T9~-~T14\}$ 
\item the rooted forest taken from $\{F6~-~F41\}$
\end{enumerate}
\end{prop}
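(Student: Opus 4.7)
The argument follows the same template as Propositions \ref{split5} and \ref{split6}. Let $P = \{v_0, v_1, \ldots, v_p\}$ be a longest path in $T$ and attempt, for successive values of $i$, to split $T$ at $v_i$ so that one piece is a rooted tree on exactly seven vertices. If this ever succeeds we are in case (i). Otherwise some \emph{critical pair} $(i, i+1)$ must exist: some split at $v_i$ is deficient (its rooted piece has fewer than seven vertices), while every split at $v_{i+1}$ is excessive (its rooted piece has more than seven vertices). Exactly as in the earlier propositions, only critical pairs with $i \in \{2, 3, 4, 5, 6\}$ need to be considered, since splits at $v_1$ only ever attach leaves and so their achievable sizes form a contiguous interval with no gap to $v_2$.

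For each potential critical pair, the plan is to enumerate the possible shapes of $T$ at and below $v_i$. The longest-path property bounds the depth of any branch attached to $v_j$ (for $j \leq i$) by $\min(j, p - j)$, and the deficiency condition forces the rooted piece at $v_i$ to contain at most six non-root vertices. Together these constraints leave only finitely many local shapes. First I would rule out $(2, 3)$ on arithmetic grounds, exactly as in Proposition \ref{split6}: the gap between the maximum split size attainable at $v_2$ and the minimum forced at $v_3$ is too narrow to skip over seven. For each of the surviving critical pairs $(3, 4), (4, 5), (5, 6),$ and $(6, 7)$, I would list the possible rooted trees or rooted forests that arise from a deficient split at $v_i$. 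A split that naturally yields a single rooted tree on six vertices is catalogued in the list $T9$--$T14$; a split in which the excess at $v_{i+1}$ must be isolated by an additional root (producing a multi-component piece) is catalogued in the list $F6$--$F41$.

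The main obstacle is the sheer size of the enumeration. For the $(6, 7)$ critical pair, branches at the relevant path vertices can reach depth up to five and several branches may coexist at each of $v_1, \ldots, v_6$, so the number of configurations grows considerably compared with Propositions \ref{split5} and \ref{split6}. The saving constraint is that the deficient piece contains at most six non-root vertices, which sharply limits both branch multiplicities and branch depths in any particular configuration. Organizing the cases first by critical pair and then, within each, by how those six vertices are distributed among $v_0, v_1, \ldots, v_{i-1}$ and the branches attached to them, produces (by routine but lengthy verification) precisely the catalog $T9$--$T14$ together with $F6$--$F41$ asserted in the statement.
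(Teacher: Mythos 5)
Your overall strategy---longest path, critical pairs, finite enumeration of the deficient local configurations---is the same as the paper's. But your range of critical pairs is wrong at both ends, and one of the two errors is fatal. You propose to \emph{rule out} the $(2,3)$ critical pair ``on arithmetic grounds, exactly as in Proposition~\ref{split6}.'' That exclusion is specific to target size $6$ and does not transfer to $7$: the paper's proof derives $T9$, $T10$, $T11$, $T12$, $F6$ and $F7$ precisely from $(2,3)$ critical pairs (splitting at $v_3$), and the Appendix explicitly labels those six structures as coming from $(2,3)$ pairs. A concrete tree your plan would miss: take the path $v_0v_1\cdots v_6$ and attach to $v_3$ two extra branches, each a vertex with two leaf children. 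The subtree hanging below $v_2$ has only $2$ vertices (deficient), while the four branches at $v_3$ have sizes $3,3,3,3$, so the rooted-tree sizes achievable at $v_3$ are $3,6,9$ and $7$ is never attained (nor is it attainable at any other vertex). The only admissible extraction is the two cherries at $v_3$, which is exactly the $6$-vertex tree $T9$ of case~(ii); with $(2,3)$ excluded, this tree falls through your case analysis entirely, so your enumeration cannot reproduce the asserted catalog.

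The other discrepancy is harmless but symptomatic: you add a $(6,7)$ critical pair, which is vacuous. The subtree hanging below $v_6$ already contains the six vertices $v_0,\dots,v_5$, so a deficient split at $v_6$ yields exactly six vertices, whence the branch of $v_7$ through $v_6$ has exactly seven and case~(i) applies---no such critical pair can form. The correct range, as in the paper, is $(2,3)$, $(3,4)$, $(4,5)$, $(5,6)$. Beyond the indexing, your write-up, like the paper's, defers the actual enumeration to ``routine verification,'' so what remains after fixing the range is to actually carry out the $(2,3)$ case, which contributes six of the forty-two catalogued structures.
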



\begin{proof}

Again, the argument is similar to that in Proposition \ref{split5}.  If \linebreak$P=\{v_0,\dots, v_p\}$ is a longest path of $T$ we consider critical pairs with indices $(2,3),(3,4),(4,5),$ and $(5,6)$.  Critical pairs with indicies $(2,3)$ produce $T9, T10, T11, T12, F6$ or $F7$ when splitting at $v_3$.  Critical pairs with indicies $(3,4)$ produce $T13, T14$ or one of $F8~-~F23$ when splitting at $v_4$.  Critical pairs with indicies $(4,5)$ produce $F24~-~F36$ when splitting at $v_5$.  Critical pairs with indicies $(5,6)$ produce $F37~-~F41$ when splitting at $v_6$.  Note that $T9~-~T14$ and $F6~-~F41$ are shown in the the Appendix.

\end{proof}

\section{Seven-Cordial Trees}

Theorem \ref{test} implies that for any integer $k>0$, if all trees and rooted forests with $k$ vertices are $k$-cordial, then all trees are $k$-cordial. This yields a method to check if trees are $k$-cordial for any integer $k$. We shorten and simplify this test for the case $k=7$.

\begin{thm}
Every tree is $7$-cordial
\end{thm}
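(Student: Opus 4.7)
The plan is to proceed by strong induction on $|T|$, invoking Theorem \ref{test} and Lemma \ref{hovey} to reduce matters to trees whose order is a multiple of $7$: once every tree on $7m$ vertices is known to be $7$-cordial, Lemma \ref{hovey} covers the orders $7m \le |T| \le 7m+4$, and the remaining residues $7m+5, 7m+6$ are absorbed as $7(m+1)-2$ and $7(m+1)-1$ by the next application of the lemma. The base case $|T| \le 7$ follows from Propositions \ref{fact1}/\ref{fact2} together with Theorem \ref{cat}: trees on at most six vertices are caterpillars, and the few non-caterpillar trees on seven vertices (which are lobsters) can be labeled directly. In the spirit of Theorem \ref{test}, I would prove the rooted-forest form in tandem with the tree form, so that the inductive hypothesis returns an adequate labeling of any smaller tree even in the presence of a specified deficit weight at a designated root.

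For the inductive step on $|T| \ge 8$, apply Proposition \ref{split7} to write $T = T_0 \cup F$, where $F$ is one of: a rooted tree on $7$ vertices, one of the rooted trees $T9$--$T14$ on $6$ vertices, or one of the rooted forests $F6$--$F41$ on $7$ vertices. The inductive hypothesis supplies a $7$-cordial labeling of $T_0$, viewed as a rooted tree whose root is the attachment point; the task reduces to extending every such root labeling across $F$ in a way that respects the global balance conditions on vertex labels and edge weights (with the single excepted-weight slack permitted by the rooted-forest definition). Symmetries from Lemmas \ref{add} and \ref{unitmult} (translation and negation) then cut the number of essentially distinct root labelings by a factor of up to $14$, and Grace's sequential algorithm (as in Proposition \ref{fact2}) supplies a default labeling of the caterpillar subpieces of $F$ that already balances the induced edge weights.

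The central obstacle is the sheer bulk of the case analysis: approximately $42$ structural pieces, each requiring verification across up to $49$ root-label and excepted-weight combinations, even after the symmetry reductions. The delicate point — and the place where the argument could fail to close — is ensuring that the deficit patterns produced by the inductive labeling of $T_0$ always land in the catalog of root labelings that the attached piece $F$ can actually handle for each $\ell \in \Z_7$; a mismatch at any piece would break the induction. My plan is therefore to catalog, for each special piece $F$, the set of root labelings under which an extension exists for every excepted weight, and then verify that the rooted-forest form of the inductive hypothesis applied to $T_0$ can always be steered to produce root labels lying inside this catalog, completing the induction.
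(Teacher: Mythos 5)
Your overall architecture (induction on $|T|$, Hovey's leaf-attachment lemma, splitting off a small rooted piece via Proposition \ref{split7}, then a catalog of labelings of the small pieces keyed to the minority weight of $T_0$) is the same as the paper's. But there are two genuine problems.

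First, your reduction to orders divisible by $7$ is incorrect. Lemma \ref{hovey} works by \emph{attaching a leaf} to a tree on $7m+j$ vertices, so from the $7$-cordiality of trees on $7m$ vertices it yields only the orders $7m,\dots,7m+4$; it cannot be run downward, so the residues $7m+5$ and $7m+6$ are \emph{not} ``absorbed as $7(m+1)-2$ and $7(m+1)-1$ by the next application of the lemma.'' Knowing that trees on $7(m+1)$ vertices are cordial gives no information about trees on $7(m+1)-1$ vertices via that lemma. These two residue classes require their own splitting results (the paper's Propositions \ref{split5} and \ref{split6}) and their own catalogs of rooted trees and forests on $5$ and $6$ vertices, with different bookkeeping: e.g.\ a $5$-vertex piece attached to $T_0$ on $7(m-1)$ vertices leaves three weights in the minority, and one must arrange that the minority weight of $T_0$ is not among them, which forces several alternative labelings per piece. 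Your plan omits this third of the argument entirely.

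Second, you correctly identify the case analysis as the central obstacle and then defer it (``my plan is therefore to catalog\dots''). That verification \emph{is} the theorem; without the explicit labelings of the roughly forty special rooted trees and forests for every relevant root label and deficit weight, nothing has been proved. The paper supplies these lists explicitly. Relatedly, your proposal to carry the full rooted-forest induction hypothesis of Theorem \ref{test} ``in tandem'' would require extensions for \emph{every} root labeling and every excepted weight $\ell$, which enlarges the verification burden; the paper instead needs only the weaker information that the cordial labeling of $T_0$ has one prescribed minority weight (or, at order $\equiv 1 \pmod 7$, one majority label), which is what makes the catalog finite and checkable.
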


\begin{proof}
We induct on the order of any tree $T$. If $|T|\leq 7$ and a catepillar, then by Proposition \ref{fact1} and Theorem \ref{cat}, $T$ is $7$-cordial. If $T$ is a lobster, we show below that the single lobster admits a $7$-cordial labeling.  Next suppose all trees on $7(m-1)$ vertices are $7$-cordial. By Lemma \ref{hovey} it is enough to show that all trees on $7(m-1)+5$, $7(m-1)+6$, and $7m$ vertices are $7$-cordial. Let $T$ be a tree on $7m$ vertices. We apply Proposition \ref{split7} and consider case $(i)$. That is, suppose $T$ splits into a tree $T_0$ on $7(m-1)$ vertices and a rooted tree $T_1$ with root $v$, on seven vertices not counting the root. We consider the degrees of $v$ in $T_1$.

\begin{figure}[ht]
\begin{center}
\begin{tikzpicture}[scale=.65]
\tikzstyle{vertex}=[circle, draw, inner sep=0pt, minimum size=6pt]
\tikzstyle{vert}=[circle,fill=black,inner sep=3pt]
\tikzstyle{overt}=[circle,fill=black!30, inner sep=3pt]
\tikzstyle{root}=[rectangle,fill=black,inner sep=3pt]

  \node[overt, label=center:\tiny{0}](u1) at (1,2){};
  \node[overt, label=center:\tiny{3}](u2) at (2,2){};
  \node[overt, label=center:\tiny{1}](u3) at (3,2){};
  \node[overt, label=center:\tiny{6}](u4) at (4,2){}; 
  \node[overt, label=center:\tiny{2}](u5) at (5,2){};
  \node[overt, label=center:\tiny{4}](u6) at (3,1){};
  \node[overt, label=center:\tiny{5}](u7) at (3,0){};

  \node [color=red] at (1.5,2.15) {\tiny{3}};
  \node [color=red] at (2.5,2.15) {\tiny{4}};
  \node [color=red] at (3.5,2.15) {\tiny{0}};
  \node [color=red] at (4.5,2.15) {\tiny{1}};
  \node [color=red] at (2.85,1.5) {\tiny{5}};
  \node [color=red] at (2.85,0.5) {\tiny{2}};

 \draw[color=black] 
  (u1)--(u2)--(u3)--(u4)--(u5) (u3)--(u6)--(u7);
\end{tikzpicture}
\caption{Seven Cordial Labeling of a Lobster}
\end{center}

\end {figure}
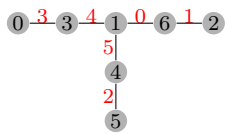

If $\deg(v)=1$, we can apply Lemma  \ref{add} to ``rotate'' the labels of $T_0$ so that the root vertex takes the label 0. 

\newpage

If $w$ is the minority weight of $T_0$ and $T_1$ is a caterpillar, we label it by Grace's algorithm with the root neighbor labeled $w$.  If $T_1$ is a lobster, we only need show that the following trees are $7$-cordial:

\begin{figure}[ht]
\begin{center}
\begin{tikzpicture}[scale=.75]
\tikzstyle{vertex}=[circle, draw, inner sep=0pt, minimum size=6pt]
\tikzstyle{vert}=[circle,fill=black,inner sep=3pt]
\tikzstyle{overt}=[circle,fill=black!30, inner sep=3pt]
\tikzstyle{root}=[rectangle,fill=black,inner sep=3pt]

  \node at (1,5) {$a.$};
  \node[root,label=above:\tiny{}](r) at (2,5){};
  \node[vertex,label=left:\tiny{}](u1) at (2,4){};
  \node[vertex,label=left:\tiny{}](u2) at (2,3){};
  \node[vertex,label=left:\tiny{}](u3) at (2,2){};
  \node[vertex,label=left:\tiny{}](u4) at (2,1){};
  \node[vertex,label=left:\tiny{}](u5) at (2,0){};
  \node[vertex,label=left:\tiny{}](u6) at (3,1){};
  \node[vertex,label=left:\tiny{}](u7) at (3,0){};

 \draw[color=black] 
 (r)--(u1)--(u2)--(u3)--(u4)--(u5) (u3)--(u6)--(u7);

  \node at (5,5) {$b.$};
  \node[root,label=above:\tiny{}](r) at (6,5){};
  \node[vertex,label=left:\tiny{}](u1) at (6,4){};
  \node[vertex,label=left:\tiny{}](u2) at (6,3){};
  \node[vertex,label=left:\tiny{}](u3) at (6,2){};
  \node[vertex,label=left:\tiny{}](u4) at (7,3){};
  \node[vertex,label=left:\tiny{}](u5) at (7,2){};
  \node[vertex,label=left:\tiny{}](u6) at (5,3){};
  \node[vertex,label=left:\tiny{}](u7) at (5,2){};

 \draw[color=black] 
 (r)--(u1)--(u2)--(u3) (u1)--(u4)--(u5) (u1)--(u6)--(u7);

  \node at (9,5) {$c.$};
  \node[root,label=above:\tiny{}](r) at (10,5){};
  \node[vertex,label=left:\tiny{}](u1) at (10,4){};
  \node[vertex,label=left:\tiny{}](u2) at (10,3){};
  \node[vertex,label=left:\tiny{}](u3) at (10,2){};
  \node[vertex,label=left:\tiny{}](u4) at (11,2){};
  \node[vertex,label=left:\tiny{}](u5) at (11,1){};
  \node[vertex,label=left:\tiny{}](u6) at (9,2){};
  \node[vertex,label=left:\tiny{}](u7) at (9,1){};

 \draw[color=black] 
 (r)--(u1)--(u2)--(u3) (u2)--(u4)--(u5) (u2)--(u6)--(u7);
\end{tikzpicture}
\end{center}

\end {figure}

If $\deg(v)=2$, we apply Proposition \ref{fact2} and note that we only need to show that the following rooted trees are $7$-cordial:

\begin{figure}[H]
\begin{center}
\begin{tikzpicture}[scale=0.55]
\tikzstyle{vertex}=[circle, draw, inner sep=0pt, minimum size=6pt]
\tikzstyle{vert}=[circle,fill=black,inner sep=3pt]
\tikzstyle{overt}=[circle,fill=black!30, inner sep=3pt]
\tikzstyle{root}=[rectangle,fill=black,inner sep=3pt]


\node at (0,100) {$d.$};
\node[root, label=above:\tiny{}] (r) at (1,100) {};
\node[vertex, label=above:\tiny{}] (u1) at (0,99) {};
\node[vertex, label=above:\tiny{}] (u2) at (0,98) {};
\node[vertex, label=above:\tiny{}] (u3) at (2,99) {};
\node[vertex, label=above:\tiny{}] (u4) at (2,98) {};
\node[vertex, label=above:\tiny{}] (u5) at (2,97) {};
\node[vertex, label=above:\tiny{}] (u6) at (2,96) {};
\node[vertex, label=above:\tiny{}] (u7) at (2,95) {};

\draw[color=black]
(r)--(u1)--(u2) (r)--(u3)--(u4)--(u5)--(u6)--(u7);

\node at (4,100) {$e.$};
\node[root, label=above:\tiny{}] (r) at (5,100) {};
\node[vertex, label=above:\tiny{}] (u1) at (4,99) {};
\node[vertex, label=above:\tiny{}] (u2) at (4,98) {};
\node[vertex, label=above:\tiny{}] (u3) at (6,99) {};
\node[vertex, label=above:\tiny{}] (u4) at (6,98) {};
\node[vertex, label=above:\tiny{}] (u5) at (6,97) {};
\node[vertex, label=above:\tiny{}] (u6) at (5.5,96) {};
\node[vertex, label=above:\tiny{}] (u7) at (6.5,96) {};

\draw[color=black]
(r)--(u1)--(u2) (r)--(u3)--(u4)--(u5)--(u6) (u5)--(u7);

\node at (8,100) {$f.$};
\node[root, label=above:\tiny{}] (r) at (9,100) {};
\node[vertex, label=above:\tiny{}] (u1) at (8,99) {};
\node[vertex, label=above:\tiny{}] (u2) at (8,98) {};
\node[vertex, label=above:\tiny{}] (u3) at (8,97) {};
\node[vertex, label=above:\tiny{}] (u4) at (10,99) {};
\node[vertex, label=above:\tiny{}] (u5) at (10,98) {};
\node[vertex, label=above:\tiny{}] (u6) at (9.5,97) {};
\node[vertex, label=above:\tiny{}] (u7) at (10.5,97) {};

\draw[color=black]
(r)--(u1)--(u2)--(u3) (r)--(u4)--(u5)--(u6) (u5)--(u7);

\node at (12,100) {$g.$};
\node[root, label=above:\tiny{}] (r) at (13,100) {};
\node[vertex, label=above:\tiny{}] (u1) at (12,99) {};
\node[vertex, label=above:\tiny{}] (u2) at (12,98) {};
\node[vertex, label=above:\tiny{}] (u3) at (12,97) {};
\node[vertex, label=above:\tiny{}] (u4) at (14,99) {};
\node[vertex, label=above:\tiny{}] (u5) at (13.5,98) {};
\node[vertex, label=above:\tiny{}] (u6) at (13.5,97) {};
\node[vertex, label=above:\tiny{}] (u7) at (14.5,98) {};

\draw[color=black]
(r)--(u1)--(u2)--(u3) (r)--(u4)--(u5)--(u6) (u4)--(u7);

\node at (0,93) {$h.$};
\node[root, label=above:\tiny{}] (r) at (1,93) {};
\node[vertex, label=above:\tiny{}] (u1) at (0,92) {};
\node[vertex, label=above:\tiny{}] (u2) at (0,91) {};
\node[vertex, label=above:\tiny{}] (u3) at (0,90) {};
\node[vertex, label=above:\tiny{}] (u4) at (2,92) {};
\node[vertex, label=above:\tiny{}] (u5) at (1.5,91) {};
\node[vertex, label=above:\tiny{}] (u6) at (2,91) {};
\node[vertex, label=above:\tiny{}] (u7) at (2.5,91) {};

\draw[color=black]
(r)--(u1)--(u2)--(u3) (r)--(u4)--(u5) (u4)--(u6) (u4)--(u7);

\node at (4,93) {$i.$};
\node[root, label=above:\tiny{}] (r) at (5,93) {};
\node[vertex, label=above:\tiny{}] (u1) at (4,92) {};
\node[vertex, label=above:\tiny{}] (u2) at (3.5,91) {};
\node[vertex, label=above:\tiny{}] (u3) at (4.5,91) {};
\node[vertex, label=above:\tiny{}] (u4) at (6,92) {};
\node[vertex, label=above:\tiny{}] (u5) at (6,91) {};
\node[vertex, label=above:\tiny{}] (u6) at (6,90) {};
\node[vertex, label=above:\tiny{}] (u7) at (6,89) {};

\draw[color=black]
(r)--(u1)--(u2) (u1)--(u3) (r)--(u4)--(u5)--(u6)--(u7);

\node at (8,93) {$j.$};
\node[root, label=above:\tiny{}] (r) at (9,93) {};
\node[vertex, label=above:\tiny{}] (u1) at (8,92) {};
\node[vertex, label=above:\tiny{}] (u2) at (7.5,91) {};
\node[vertex, label=above:\tiny{}] (u3) at (8.5,91) {};
\node[vertex, label=above:\tiny{}] (u4) at (10,92) {};
\node[vertex, label=above:\tiny{}] (u5) at (10,91) {};
\node[vertex, label=above:\tiny{}] (u6) at (9.5,90) {};
\node[vertex, label=above:\tiny{}] (u7) at (10.5,90) {};

\draw[color=black]
(r)--(u1)--(u2) (u1)--(u3) (r)--(u4)--(u5)--(u6) (u5)--(u7);

\node at (12,93) {$k.$};
\node[root, label=above:\tiny{}] (r) at (13,93) {};
\node[vertex, label=above:\tiny{}] (u1) at (12,92) {};
\node[vertex, label=above:\tiny{}] (u2) at (11.5,91) {};
\node[vertex, label=above:\tiny{}] (u3) at (12.5,91) {};
\node[vertex, label=above:\tiny{}] (u4) at (14,92) {};
\node[vertex, label=above:\tiny{}] (u5) at (13.5,91) {};
\node[vertex, label=above:\tiny{}] (u6) at (13.5,90) {};
\node[vertex, label=above:\tiny{}] (u7) at (14.5,91) {};

\draw[color=black]
(r)--(u1)--(u2) (u1)--(u3) (r)--(u4)--(u5)--(u6) (u4)--(u7);


\node at (0,87) {$\ell.$};
\node[root, label=above:\tiny{}] (r) at (1,87) {};
\node[vertex, label=above:\tiny{}] (u1) at (0,86) {};
\node[vertex, label=above:\tiny{}] (u2) at (-0.5,85) {};
\node[vertex, label=above:\tiny{}] (u3) at (0.5,85) {};
\node[vertex, label=above:\tiny{}] (u4) at (2,86) {};
\node[vertex, label=above:\tiny{}] (u5) at (1.5,85) {};
\node[vertex, label=above:\tiny{}] (u6) at (2,85) {};
\node[vertex, label=above:\tiny{}] (u7) at (2.5,85) {};

\draw[color=black]
(r)--(u1)--(u2) (u1)--(u3) (r)--(u4)--(u5) (u4)--(u6) (u4)--(u7);

\node at (4,87) {$m.$};
\node[root, label=above:\tiny{}] (r) at (5,87) {};
\node[vertex, label=above:\tiny{}] (u1) at (4,86) {};
\node[vertex, label=above:\tiny{}] (u2) at (4,85) {};
\node[vertex, label=above:\tiny{}] (u3) at (6,86) {};
\node[vertex, label=above:\tiny{}] (u4) at (6,85) {};
\node[vertex, label=above:\tiny{}] (u5) at (6,84) {};
\node[vertex, label=above:\tiny{}] (u6) at (6,83) {};
\node[vertex, label=above:\tiny{}] (u7) at (6.5,84) {};

\draw[color=black]
(r)--(u1)--(u2) (r)--(u3)--(u4)--(u5)--(u6) (u4)--(u7);

\node at (8,87) {$n.$};
\node[root, label=above:\tiny{}] (r) at (9,87) {};
\node[vertex, label=above:\tiny{}] (u1) at (8,86) {};
\node[vertex, label=above:\tiny{}] (u2) at (8,85) {};
\node[vertex, label=above:\tiny{}] (u3) at (10,86) {};
\node[vertex, label=above:\tiny{}] (u4) at (10,85) {};
\node[vertex, label=above:\tiny{}] (u5) at (10,84) {};
\node[vertex, label=above:\tiny{}] (u6) at (10,83) {};
\node[vertex, label=above:\tiny{}] (u7) at (10.5,85) {};

\draw[color=black]
(r)--(u1)--(u2) (r)--(u3)--(u4)--(u5)--(u6) (u3)--(u7);

\node at (12,87) {$o.$};
\node[root, label=above:\tiny{}] (r) at (13,87) {};
\node[vertex, label=above:\tiny{}] (u1) at (12,86) {};
\node[vertex, label=above:\tiny{}] (u2) at (12,85) {};
\node[vertex, label=above:\tiny{}] (u3) at (14,86) {};
\node[vertex, label=above:\tiny{}] (u4) at (14,85) {};
\node[vertex, label=above:\tiny{}] (u5) at (13.5,84) {};
\node[vertex, label=above:\tiny{}] (u6) at (14,84) {};
\node[vertex, label=above:\tiny{}] (u7) at (14.5,84) {};

\draw[color=black]
(r)--(u1)--(u2) (r)--(u3)--(u4)--(u5) (u4)--(u6) (u4)--(u7);


\node at (0,82) {$p.$};
\node[root, label=above:\tiny{}] (r) at (1,82) {};
\node[vertex, label=above:\tiny{}] (u1) at (0,81) {};
\node[vertex, label=above:\tiny{}] (u2) at (0,80) {};
\node[vertex, label=above:\tiny{}] (u3) at (2,81) {};
\node[vertex, label=above:\tiny{}] (u4) at (1.5,80) {};
\node[vertex, label=above:\tiny{}] (u5) at (2.5,80) {};
\node[vertex, label=above:\tiny{}] (u6) at (2,79) {};
\node[vertex, label=above:\tiny{}] (u7) at (3,79) {};

\draw[color=black]
(r)--(u1)--(u2) (r)--(u3)--(u5)--(u7) (u3)--(u4) (u5)--(u6);

\node at (4,82) {$q.$};
\node[root, label=above:\tiny{}] (r) at (5,82) {};
\node[vertex, label=above:\tiny{}] (u1) at (4,81) {};
\node[vertex, label=above:\tiny{}] (u2) at (4,80) {};
\node[vertex, label=above:\tiny{}] (u3) at (6,81) {};
\node[vertex, label=above:\tiny{}] (u4) at (6,80) {};
\node[vertex, label=above:\tiny{}] (u5) at (6,79) {};
\node[vertex, label=above:\tiny{}] (u6) at (5.5,80) {};
\node[vertex, label=above:\tiny{}] (u7) at (6.5,80) {};

\draw[color=black]
(r)--(u1)--(u2) (r)--(u3)--(u4)--(u5) (u3)--(u6) (u3)--(u7);

\node at (8,82) {$r.$};
\node[root, label=above:\tiny{}] (r) at (9,82) {};
\node[vertex, label=above:\tiny{}] (u1) at (8,81) {};
\node[vertex, label=above:\tiny{}] (u2) at (8,80) {};
\node[vertex, label=above:\tiny{}] (u3) at (10,81) {};
\node[vertex, label=above:\tiny{}] (u4) at (9.25,80) {};
\node[vertex, label=above:\tiny{}] (u5) at (9.75,80) {};
\node[vertex, label=above:\tiny{}] (u6) at (10.25,80) {};
\node[vertex, label=above:\tiny{}] (u7) at (10.75,80) {};

\draw[color=black]
(r)--(u1)--(u2) (r)--(u3)--(u4) (u3)--(u5) (u3)--(u6) (u3)--(u7);

\node at (12,82) {$s.$};
\node[root, label=above:\tiny{}] (r) at (13,82) {};
\node[vertex, label=above:\tiny{}] (u1) at (12,81) {};
\node[vertex, label=above:\tiny{}] (u2) at (14,81) {};
\node[vertex, label=above:\tiny{}] (u3) at (14.5,80) {};
\node[vertex, label=above:\tiny{}] (u4) at (13.5,80) {};
\node[vertex, label=above:\tiny{}] (u5) at (13,79) {};
\node[vertex, label=above:\tiny{}] (u6) at (14,79) {};
\node[vertex, label=above:\tiny{}] (u7) at (15,79) {};

\draw[color=black]
(r)--(u1) (r)--(u2)--(u3)--(u7)  (u2)--(u4) (u4)--(u5) (u4)--(u6);

\end{tikzpicture}
\end{center}
\end{figure}

If $\deg(v)=3$, we apply Proposition \ref{fact2} and notice that we only need to show the following rooted trees $7$-cordial:

\begin{figure}[!htb]
\begin{center}
\begin{tikzpicture}[scale=0.65]
\tikzstyle{vertex}=[circle, draw, inner sep=0pt, minimum size=6pt]
\tikzstyle{vert}=[circle,fill=black,inner sep=3pt]
\tikzstyle{overt}=[circle,fill=black!30, inner sep=3pt]
\tikzstyle{root}=[rectangle,fill=black,inner sep=3pt]

\node[root, label=above:\tiny{}] (r) at (1,100) {};
\node[vertex, label=above:\tiny{}] (u1) at (0,99) {};
\node[vertex, label=above:\tiny{}] (u2) at (0,98) {};
\node[vertex, label=above:\tiny{}] (u3) at (1,99) {};
\node[vertex, label=above:\tiny{}] (u4) at (1,98) {};
\node[vertex, label=above:\tiny{}] (u5) at (2,99) {};
\node[vertex, label=above:\tiny{}] (u6) at (2,98) {};
\node[vertex, label=above:\tiny{}] (u7) at (2,97) {};

\draw[color=black]
(r)--(u1)--(u2) (r)--(u3)--(u4) (r)--(u5)--(u6)--(u7);

\node at (0,100) {$t.$};

\node[root, label=above:\tiny{}] (r) at (6,100) {};
\node[vertex, label=above:\tiny{}] (u1) at (5,99) {};
\node[vertex, label=above:\tiny{}] (u2) at (5,98) {};
\node[vertex, label=above:\tiny{}] (u3) at (6,99) {};
\node[vertex, label=above:\tiny{}] (u4) at (6,98) {};
\node[vertex, label=above:\tiny{}] (u5) at (7,99) {};
\node[vertex, label=above:\tiny{}] (u6) at (6.5,98) {};
\node[vertex, label=above:\tiny{}] (u7) at (7.5,98) {};

\draw[color=black]
(r)--(u1)--(u2) (r)--(u3)--(u4) (r)--(u5)--(u6) (u5)--(u7);

\node at (5,100) {$u.$};

\node[root, label=above:\tiny{}] (r) at (11,100) {};
\node[vertex, label=above:\tiny{}] (u1) at (10,99) {};
\node[vertex, label=above:\tiny{}] (u2) at (11,99) {};
\node[vertex, label=above:\tiny{}] (u3) at (11,98) {};
\node[vertex, label=above:\tiny{}] (u4) at (11,97) {};
\node[vertex, label=above:\tiny{}] (u5) at (12,99) {};
\node[vertex, label=above:\tiny{}] (u6) at (12,98) {};
\node[vertex, label=above:\tiny{}] (u7) at (12,97) {};

\draw[color=black]
(r)--(u1) (r)--(u2)--(u3)--(u4) (r)--(u5)--(u6)--(u7);

\node at (10,100) {$v.$};

\node[root, label=above:\tiny{}] (r) at (1,95) {};
\node[vertex, label=above:\tiny{}] (u1) at (0,94) {};
\node[vertex, label=above:\tiny{}] (u2) at (1,94) {};
\node[vertex, label=above:\tiny{}] (u3) at (1,93) {};
\node[vertex, label=above:\tiny{}] (u4) at (1,92) {};
\node[vertex, label=above:\tiny{}] (u5) at (2,94) {};
\node[vertex, label=above:\tiny{}] (u6) at (1.5,93) {};
\node[vertex, label=above:\tiny{}] (u7) at (2.5,93) {};

\draw[color=black]
(r)--(u1) (r)--(u2)--(u3)--(u4) (r)--(u5)--(u6) (u5)--(u7);

\node at (0,95) {$w.$};

\node[root, label=above:\tiny{}] (r) at (6,95) {};
\node[vertex, label=above:\tiny{}] (u1) at (4.5,94) {};
\node[vertex, label=above:\tiny{}] (u2) at (6,94) {};
\node[vertex, label=above:\tiny{}] (u3) at (5.5,93) {};
\node[vertex, label=above:\tiny{}] (u4) at (6.5,93) {};
\node[vertex, label=above:\tiny{}] (u5) at (7.5,94) {};
\node[vertex, label=above:\tiny{}] (u6) at (7,93) {};
\node[vertex, label=above:\tiny{}] (u7) at (8,93) {};

\draw[color=black]
(r)--(u1) (r)--(u2)--(u3) (u2)--(u4) (r)--(u5)--(u6) (u5)--(u7);

\node at (4,95) {$x.$};

\node[root, label=above:\tiny{}] (r) at (11,95) {};
\node[vertex, label=above:\tiny{}] (u1) at (10,94) {};
\node[vertex, label=above:\tiny{}] (u2) at (11,94) {};
\node[vertex, label=above:\tiny{}] (u3) at (11,93) {};
\node[vertex, label=above:\tiny{}] (u4) at (12,94) {};
\node[vertex, label=above:\tiny{}] (u5) at (12,93) {};
\node[vertex, label=above:\tiny{}] (u6) at (12,92) {};
\node[vertex, label=above:\tiny{}] (u7) at (12,91) {};

\draw[color=black]
(r)--(u1) (r)--(u2)--(u3) (r)--(u4)--(u5)--(u6)--(u7);

\node at (10,95) {$y.$};

\node[root, label=above:\tiny{}] (r) at (1,90) {};
\node[vertex, label=above:\tiny{}] (u1) at (0,89) {};
\node[vertex, label=above:\tiny{}] (u2) at (1,89) {};
\node[vertex, label=above:\tiny{}] (u3) at (1,88) {};
\node[vertex, label=above:\tiny{}] (u4) at (2,89) {};
\node[vertex, label=above:\tiny{}] (u5) at (2,88) {};
\node[vertex, label=above:\tiny{}] (u6) at (1.5,87) {};
\node[vertex, label=above:\tiny{}] (u7) at (2.5,87) {};

\draw[color=black]
(r)--(u1) (r)--(u2)--(u3) (r)--(u4)--(u5)--(u6) (u5)--(u7);

\node at (0,90) {$z.$};

\node[root, label=above:\tiny{}] (r) at (6,90) {};
\node[vertex, label=above:\tiny{}] (u1) at (5,89) {};
\node[vertex, label=above:\tiny{}] (u2) at (6,89) {};
\node[vertex, label=above:\tiny{}] (u3) at (6,88) {};
\node[vertex, label=above:\tiny{}] (u4) at (7,89) {};
\node[vertex, label=above:\tiny{}] (u5) at (6.5,88) {};
\node[vertex, label=above:\tiny{}] (u6) at (7.5,88) {};
\node[vertex, label=above:\tiny{}] (u7) at (7.5,87) {};

\draw[color=black]
(r)--(u1) (r)--(u2)--(u3) (r)--(u4)--(u5) (u4)--(u6)--(u7);

\node at (5,90) {$aa.$};

\node[root, label=above:\tiny{}] (r) at (11,90) {};
\node[vertex, label=above:\tiny{}] (u1) at (10,89) {};
\node[vertex, label=above:\tiny{}] (u2) at (11,89) {};
\node[vertex, label=above:\tiny{}] (u3) at (11,88) {};
\node[vertex, label=above:\tiny{}] (u4) at (12,89) {};
\node[vertex, label=above:\tiny{}] (u5) at (11.5,88) {};
\node[vertex, label=above:\tiny{}] (u6) at (12,88) {};
\node[vertex, label=above:\tiny{}] (u7) at (12.5,88) {};

\draw[color=black]
(r)--(u1) (r)--(u2)--(u3) (r)--(u4)--(u5) (u4)--(u6) (u4)--(u7);

\node at (10,90) {$bb.$};

\end{tikzpicture}
\end{center}
\end{figure}

If $\deg(v)=4,5,6,7$, we apply Proposition \ref{fact2} and notice that we only need to show the following rooted trees $7$-cordial:

\begin{figure}[!htb]
\begin{center}
\begin{tikzpicture}[scale=0.65]
\tikzstyle{vertex}=[circle, draw, inner sep=0pt, minimum size=6pt]
\tikzstyle{vert}=[circle,fill=black,inner sep=3pt]
\tikzstyle{overt}=[circle,fill=black!30, inner sep=3pt]
\tikzstyle{root}=[rectangle,fill=black,inner sep=3pt]

\node[root, label=above:\tiny{}] (r) at (1,100) {};
\node[vertex, label=above:\tiny{}] (u1) at (-0.5,99) {};
\node[vertex, label=above:\tiny{}] (u2) at (0.5,99) {};
\node[vertex, label=above:\tiny{}] (u3) at (1.5,99) {};
\node[vertex, label=above:\tiny{}] (u4) at (1.5,98) {};
\node[vertex, label=above:\tiny{}] (u5) at (2.5,99) {};
\node[vertex, label=above:\tiny{}] (u6) at (2.5,98) {};
\node[vertex, label=above:\tiny{}] (u7) at (2.5,97) {};

\draw[color=black]
(r)--(u1) (r)--(u2) (r)--(u3)--(u4) (r)--(u5)--(u6)--(u7);

\node at (0,100) {$cc.$};

\node[root, label=above:\tiny{}] (r) at (9,100) {};
\node[vertex, label=above:\tiny{}] (u1) at (7.5,99) {};
\node[vertex, label=above:\tiny{}] (u2) at (8.5,99) {};
\node[vertex, label=above:\tiny{}] (u3) at (9.5,99) {};
\node[vertex, label=above:\tiny{}] (u4) at (9.5,98) {};
\node[vertex, label=above:\tiny{}] (u5) at (10.5,99) {};
\node[vertex, label=above:\tiny{}] (u6) at (10,98) {};
\node[vertex, label=above:\tiny{}] (u7) at (11,98) {};

\draw[color=black]
(r)--(u1) (r)--(u2) (r)--(u3)--(u4) (r)--(u5)--(u6) (u5)--(u7);

\node at (8,100) {$dd.$};

\node[root, label=above:\tiny{}] (r) at (1,96) {};
\node[vertex, label=above:\tiny{}] (u1) at (-0.5,95) {};
\node[vertex, label=above:\tiny{}] (u2) at (0.5,95) {};
\node[vertex, label=above:\tiny{}] (u3) at (0.5,94) {};
\node[vertex, label=above:\tiny{}] (u4) at (1.5,95) {};
\node[vertex, label=above:\tiny{}] (u5) at (1.5,94) {};
\node[vertex, label=above:\tiny{}] (u6) at (2.5,95) {};
\node[vertex, label=above:\tiny{}] (u7) at (2.5,94) {};

\draw[color=black]
(r)--(u1) (r)--(u2)--(u3) (r)--(u4)--(u5) (r)--(u6)--(u7);

\node at (0,96) {$ee.$};

\node[root, label=above:\tiny{}] (r) at (9,96) {};
\node[vertex, label=above:\tiny{}] (u1) at (7,95) {};
\node[vertex, label=above:\tiny{}] (u2) at (8,95) {};
\node[vertex, label=above:\tiny{}] (u3) at (9,95) {};
\node[vertex, label=above:\tiny{}] (u4) at (10,95) {};
\node[vertex, label=above:\tiny{}] (u5) at (10,94) {};
\node[vertex, label=above:\tiny{}] (u6) at (11,95) {};
\node[vertex, label=above:\tiny{}] (u7) at (11,94) {};

\draw[color=black]
(r)--(u1) (r)--(u2) (r)--(u3) (r)--(u4)--(u5) (r)--(u6)--(u7);

\node at (8,96) {$ff.$};

\end{tikzpicture}
\end{center}
\end{figure}
\medskip
For the representations of the rooted trees $a$ through $ff$ above, we define the level $\ell_0$ as the root. For any $i>0$, the level $\ell_i$ is defined to be those vertices of distance $i$ from the root, ordered from left to right as in the representation above. Using this notation, we provide a $7$-cordial labeling for each rooted tree, so that either every weight appears once or for every weight there is a labeling of  each rooted tree with that weight serving as a majority weight. For each case above, our labeling will be on vertices starting with the root and continuing to subsequent levels, from left to right. We end each labeling by listing which weight is in the majority. These labelings can be found in List $1$ in the Addendum posted on arXiv \cite{arXiv} .

Suppose $T$ splits into $T_0$ on $7(m-1)$ vertices and one of the above rooted trees $T_1$. By the induction hypothesis and Hovey's lemma, we can label $T_0$ $7$-cordially with some minority weight $w$. However, we have shown that no matter the value of $w$, we can make $w$ a majority weight of $T_1$ or there is a labeling of $T_1$ with no majority weight. Pasting $T$ back together with this labeling produces a $7$-cordial labeling of $T$.

Next, we consider case $(ii)$ of Proposition \ref{split7}. We label the rooted trees $T9~-~T14$ so that no label or weight appears more than once, and for every label there is a labeling with that label not present. These labelings can be found in List $2$ in the Addendum posted on arXiv.

Since $T_0$ is of order $7m+1$, we can label it $7$-cordially by the induction hypothesis and Hovey's lemma. This means that no weight on $T_0$ is in the minority and one label is in the majority. $T_2$ is then one of the trees on six vertices $T9~-~T14$.  For the minority label in one of the cordial labelings of the rooted tree $T_2$, we choose the majority label of $T_0$. Pasting $T$ back together produces a $7$-cordial labeling of $T$.

We now consider case $(iii)$ of Proposition \ref{split7}. We label the rooted forests $F6~-~F41$ so that zero appears on the left root and consider all other possible labels on the other root.  For each root label pairs, we provide a labeling that either has no majority weight or a set of labelings where each weight is in the majority. Again,  pasting $T$ back together with this labeling produces a $7$-cordial labeling of $T$.  These labelings can be found in List $3$ in the Addendum posted on arXiv.

For trees on $7(m-1)+6$ verticies, we argue as in the case of $7m$ vertices.

If $\deg(v)=1$, we can apply Lemma  \ref{add} to ``rotate'' the labels of $T_0$ so that the root vertex takes the label 0. If $w$ is the minority weight of $T_0$, we label the caterpillar $T_1$ by Grace's algorithm with the root neighbor labeled $w$.

If $\deg(v)=2$, we apply Proposition \ref{fact2} and notice that we only need to show the following rooted trees $7$-cordial:

\pagebreak

\begin{figure}[ht]
\begin{center}
\begin{tikzpicture}[scale=.9]
\tikzstyle{vertex}=[circle, draw, inner sep=0pt, minimum size=6pt]
\tikzstyle{vert}=[circle,fill=black,inner sep=3pt]
\tikzstyle{overt}=[circle,fill=black!30, inner sep=3pt]
\tikzstyle{root}=[rectangle,fill=black,inner sep=3pt]

  \node[root, label=above:\tiny{}] (r) at (2,3) {};
  \node[vertex, label=left:\tiny{}](u1) at (1,2){};
  \node[vertex, label=left:\tiny{}](u3) at (3,2){};
  \node[vertex, label=left:\tiny{}](u4) at (1,1){};
  \node[vertex, label=left:\tiny{}](u6) at (3,1){};
  \node[vertex, label=left:\tiny{}](u7) at (3,0){};
  \node[vertex, label=left:\tiny{}](u8) at (3,-1){};

 \draw[color=black] 
   (r)--(u1)--(u4)  (r)--(u3)--(u6)--(u7)--(u8);

\node at (1,3) {$gg.$};

  \node[root, label=above:\tiny{}] (r) at (5,3) {};
  \node[vertex, label=left:\tiny{}](u1) at (4,2){};
  \node[vertex, label=left:\tiny{}](u3) at (6,2){};
  \node[vertex, label=left:\tiny{}](u4) at (4,1){};
  \node[vertex, label=left:\tiny{}](u6) at (6,1){};
  \node[vertex, label=left:\tiny{}](u7) at (6,0){};
  \node[vertex, label=left:\tiny{}](u8) at (4,0){};

 \draw[color=black] 
   (r)--(u1)--(u4)--(u8)  (r)--(u3)--(u6)--(u7);

\node at (4,3) {$hh.$};

  \node[root, label=above:\tiny{}] (r) at (8.5,3) {};
  \node[vertex, label=left:\tiny{}](u1) at (7.5,2){};
  \node[vertex, label=left:\tiny{}](u3) at (9.5,2){};
  \node[vertex, label=left:\tiny{}](u4) at (7.5,1){};
  \node[vertex, label=left:\tiny{}](u6) at (9.5,1){};
  \node[vertex, label=left:\tiny{}](u7) at (7,1){};
  \node[vertex, label=left:\tiny{}](u8) at (10,1){};

 \draw[color=black] 
  (r)--(u1)--(u4)  (u1)--(u7)  (r)--(u3)--(u6) (u3)--(u8);

\node at (7.5,3) {$ii.$};

  \node[root, label=above:\tiny{}] (r) at (5,-1) {};
  \node[vertex, label=left:\tiny{}](u1) at (4,-2){};
  \node[vertex, label=left:\tiny{}](u3) at (6,-2){};
  \node[vertex, label=left:\tiny{}](u4) at (4,-3){};
  \node[vertex, label=left:\tiny{}](u6) at (6,-3){};
  \node[vertex, label=left:\tiny{}](u7) at (3.5,-3){};
  \node[vertex, label=left:\tiny{}](u8) at (3,-3){};

 \draw[color=black] 
   (r)--(u1)--(u4)  (r)--(u3)--(u6) (u1)--(u7) (u1)--(u8);

\node at (4,-1) {$jj.$};

  \node[root, label=above:\tiny{}] (r) at (8.5,-1) {};
  \node[vertex, label=left:\tiny{}](u1) at (7.5,-2){};
  \node[vertex, label=left:\tiny{}](u3) at (9.5,-2){};
  \node[vertex, label=left:\tiny{}](u4) at (7.5,-3){};
  \node[vertex, label=left:\tiny{}](u6) at (9.5,-3){};
  \node[vertex, label=left:\tiny{}](u7) at (7,-3){};
  \node[vertex, label=left:\tiny{}](u8) at (9.5,-4){};

 \draw[color=black] 
  (r)--(u1)--(u4)  (u1)--(u7)  (r)--(u3)--(u6)--(u8);

\node at (7.5,-1) {$kk.$};

\end{tikzpicture}
\end{center}

\end {figure}

If $\deg(v)=3$, we apply Proposition \ref{fact2} and notice that we only need to show the following rooted trees $7$-cordial: 

\begin{figure}[ht]
\begin{center}
\begin{tikzpicture}[scale=1]
\tikzstyle{vertex}=[circle, draw, inner sep=0pt, minimum size=6pt]
\tikzstyle{vert}=[circle,fill=black,inner sep=3pt]
\tikzstyle{overt}=[circle,fill=black!30, inner sep=3pt]
\tikzstyle{root}=[rectangle,fill=black,inner sep=3pt]

  \node[root, label=above:\tiny{}] (r) at (2,3) {};
  \node[vertex, label=left:\tiny{}](u1) at (1,2){};
  \node[vertex, label=left:\tiny{}](u3) at (3,2){};
  \node[vertex, label=left:\tiny{}](u4) at (2,2){};
  \node[vertex, label=left:\tiny{}](u6) at (3,1){};
  \node[vertex, label=left:\tiny{}](u7) at (3,0){};
  \node[vertex, label=left:\tiny{}](u8) at (2,1){};

 \draw[color=black] 
   (r)--(u1) (r)--(u4)--(u8)  (r)--(u3)--(u6)--(u7);

\node at (1,3) {$\ell\ell.$};

  \node[root, label=above:\tiny{}] (r) at (5,3) {};
  \node[vertex, label=left:\tiny{}](u1) at (4,2){};
  \node[vertex, label=left:\tiny{}](u3) at (6,2){};
  \node[vertex, label=left:\tiny{}](u4) at (4,1){};
  \node[vertex, label=left:\tiny{}](u6) at (6,1){};
  \node[vertex, label=left:\tiny{}](u7) at (5,2){};
  \node[vertex, label=left:\tiny{}](u8) at (5,1){};

 \draw[color=black] 
   (r)--(u1)--(u4)  (r)--(u3)--(u6) (r)--(u7)--(u8);

\node at (4,3) {$mm.$};

\end{tikzpicture}
\end{center}

\end {figure}

If $\deg(v)=4,5,6$, we apply Proposition \ref{fact2} and notice that we only need to show the following rooted tree $7$-cordial:

\begin{figure}[ht]
\begin{center}
\begin{tikzpicture}[scale=1]
\tikzstyle{vertex}=[circle, draw, inner sep=0pt, minimum size=6pt]
\tikzstyle{vert}=[circle,fill=black,inner sep=3pt]
\tikzstyle{overt}=[circle,fill=black!30, inner sep=3pt]
\tikzstyle{root}=[rectangle,fill=black,inner sep=3pt]

  \node[root, label=above:\tiny{}] (r) at (2,3) {};
  \node[vertex, label=left:\tiny{}](u1) at (1,2){};
  \node[vertex, label=left:\tiny{}](u3) at (3,2){};
  \node[vertex, label=left:\tiny{}](u4) at (2,2){};
  \node[vertex, label=left:\tiny{}](u6) at (3,1){};
  \node[vertex, label=left:\tiny{}](u7) at (2,1){};
  \node[vertex, label=left:\tiny{}](u8) at (0,2){};

 \draw[color=black] 
   (r)--(u1) (r)--(u4)--(u7)  (r)--(u3)--(u6) (r)--(u8);

\node at (1,3) {$nn.$};

\end{tikzpicture}
\end{center}

\end {figure}

For each of the above rooted trees $gg$ to $nn$, we produce two $7$-cordial labelings with no repeated vertex label and one minority weights, which will be different in the two labelings. If a minority weight of $T_0$ is the minority weights in one of the labelings of $T_1$, then we use the other. If the minority weight of $T_0$ is the minority of neither of the two  labelings of $T_1$, we use either. We follow the notation in the labelings of the previous cases. These descriptions can be found in List $4$ in the Addendum posted on arXiv.
\newline
\newline
\newline
Suppose $T$ splits into $T_0$ on $7(m-1)$ vertices and one of the above rooted trees $T_1$. By induction, we can label $T_0$ $7$-cordially with some minority weight $w$. However, we have shown that no matter the value of $w$, we can create a labeling of $T_1$ with no repeated label and $w$ is not a minority weight. Pasting $T$ back together with this labeling produces a $7$-cordial labeling of $T$.

Next, we consider case $(ii)$ of Proposition \ref{split6}. We label the rooted trees $T5, T6, T7, T8$ so that no label or weight appears more than once, and for every label there is a labeling with that label not present. These labelings can be found in List $5$ in the Addendum posted on arXiv.

Since $T_0$ is of order $7m+1$, we can label it $7$-cordially by the induction hypothesis and Hovey's lemma. This means that no weight on $T_0$ is in the minority and one label is in the majority. For the minority label in one of the cordial labelings of the rooted tree $T_1$, we choose the majority label of $T_0$. Pasting $T$ back together produces a $7$-cordial labeling of $T$.

We now consider case $(iii)$ of Proposition \ref{split6} where $T$ splits into $T_0$ on $7m$ verticies and a forest, $F$, that is one of $F2,F3,F4,$ or $F5$.  We label $F$ so that zero appears on the left root and consider all other possible labels on the other root. For each of these root label pairs we produce two $7$-cordial labelings with no repeated vertex label and one weight in the minority, which will be different in the two labelings. If a minority weight of $T_0$ is one of the minority weights in one of the labelings of $F$, then we use the other. If the minority weight of $T_0$ is neither of the two minority weights in the labelings of $F$, we use either labeling. We follow the notation in the labelings of the previous cases. These labelings can be found in List $6$ in the Addendum posted on arXiv.

For trees on $7(m-1)+5$ vertices, we again argue as in the cases with $7(m-1)+6$ and $7(m-1)+7$ vertices.

If $\deg(v)=1$, we can apply Lemma  \ref{add} to ``rotate'' the labels of $T_0$ so that the root vertex takes the label 0. If $w$ is the minority weight of $T_0$, we label the caterpillar $T_1$ by Grace's algorithm with the root neighbor labeled $w$.

\newpage
If $\deg(v)=2$, we consider the following trees:

\begin{figure}[ht]
\begin{center}
\begin{tikzpicture}[scale=.7]
\tikzstyle{vertex}=[circle, draw, inner sep=0pt, minimum size=6pt]
\tikzstyle{vert}=[circle,fill=black,inner sep=3pt]
\tikzstyle{overt}=[circle,fill=black!30, inner sep=3pt]
\tikzstyle{root}=[rectangle,fill=black,inner sep=3pt]

  \node[root, label=above:\tiny{}] (r) at (2,3) {};
  \node[vertex, label=left:\tiny{}](u1) at (1,2){};
  \node[vertex, label=left:\tiny{}](u3) at (3,2){};
  \node[vertex, label=left:\tiny{}](u4) at (1,1){};
  \node[vertex, label=left:\tiny{}](u6) at (3,1){};
  \node[vertex, label=left:\tiny{}](u7) at (3,0){};

 \draw[color=black] 
   (r)--(u1)--(u4)  (r)--(u3)--(u6)--(u7);

\node at (1,3) {$oo.$};

  \node[root, label=above:\tiny{}] (r) at (5,3) {};
  \node[vertex, label=left:\tiny{}](u1) at (4,2){};
  \node[vertex, label=left:\tiny{}](u3) at (6,2){};
  \node[vertex, label=left:\tiny{}](u4) at (4,1){};
  \node[vertex, label=left:\tiny{}](u6) at (6,1){};
  \node[vertex, label=left:\tiny{}](u7) at (6.5,1){};

 \draw[color=black] 
   (r)--(u1)--(u4)  (r)--(u3)--(u6) (u3)--(u7);

\node at (4,3) {$pp.$};

  \node[root, label=above:\tiny{}] (r) at (8,3) {};
  \node[vertex, label=left:\tiny{}](u1) at (7,2){};
  \node[vertex, label=left:\tiny{}](u3) at (9,2){};
  \node[vertex, label=left:\tiny{}](u6) at (9,1){};
  \node[vertex, label=left:\tiny{}](u7) at (9,0){};
  \node[vertex, label=left:\tiny{}](u8) at (9,-1){};

 \draw[color=black] 
   (r)--(u1)  (r)--(u3)--(u6)--(u7)--(u8);

\node at (7.5,3) {$qq.$};

  \node[root, label=above:\tiny{}] (r) at (2,-1) {};
  \node[vertex, label=left:\tiny{}](u1) at (1,-2){};
  \node[vertex, label=left:\tiny{}](u3) at (3,-2){};
  \node[vertex, label=left:\tiny{}](u6) at (3,-3){};
  \node[vertex, label=left:\tiny{}](u7) at (3,-4){};
  \node[vertex, label=left:\tiny{}](u8) at (3.5,-4){};

 \draw[color=black] 
   (r)--(u1)  (r)--(u3)--(u6)--(u7) (u6)--(u8);

\node at (1,-1) {$rr.$};

  \node[root, label=above:\tiny{}] (r) at (5,-1) {};
  \node[vertex, label=left:\tiny{}](u1) at (4,-2){};
  \node[vertex, label=left:\tiny{}](u3) at (6,-2){};
  \node[vertex, label=left:\tiny{}](u6) at (6,-3){};
  \node[vertex, label=left:\tiny{}](u7) at (6,-4){};
  \node[vertex, label=left:\tiny{}](u8) at (6.5,-3){};

 \draw[color=black] 
   (r)--(u1)  (r)--(u3)--(u6)--(u7) (u3)--(u8);

\node at (4,-1) {$ss.$};

  \node[root, label=above:\tiny{}] (r) at (8,-1) {};
  \node[vertex, label=left:\tiny{}](u1) at (7,-2){};
  \node[vertex, label=left:\tiny{}](u3) at (9,-2){};
  \node[vertex, label=left:\tiny{}](u6) at (9,-3){};
  \node[vertex, label=left:\tiny{}](u7) at (8.5,-3){};
  \node[vertex, label=left:\tiny{}](u8) at (9.5,-3){};

 \draw[color=black] 
   (r)--(u1)  (r)--(u3)--(u6) (u7)--(u3)--(u8);

\node at (7.5,-1) {$tt.$};

\end{tikzpicture}
\end{center}

\end {figure}

If $\deg(v)=3$, we consider the next two trees:

\begin{figure}[ht]
\begin{center}
\begin{tikzpicture}[scale=1]
\tikzstyle{vertex}=[circle, draw, inner sep=0pt, minimum size=6pt]
\tikzstyle{vert}=[circle,fill=black,inner sep=3pt]
\tikzstyle{overt}=[circle,fill=black!30, inner sep=3pt]
\tikzstyle{root}=[rectangle,fill=black,inner sep=3pt]

  \node[root, label=above:\tiny{}] (r) at (2,3) {};
  \node[vertex, label=left:\tiny{}](u1) at (1,2){};
  \node[vertex, label=left:\tiny{}](u2) at (2,2){};
  \node[vertex, label=left:\tiny{}](u3) at (3,2){};
  \node[vertex, label=left:\tiny{}](u4) at (2,1){};
  \node[vertex, label=left:\tiny{}](u6) at (3,1){};

 \draw[color=black] 
   (r)--(u1) (r)--(u2)--(u4)  (r)--(u3)--(u6);

\node at (1,3) {$uu.$};

  \node[root, label=above:\tiny{}] (r) at (5,3) {};
  \node[vertex, label=left:\tiny{}](u1) at (4,2){};
  \node[vertex, label=left:\tiny{}](u2) at (5,2){};

  \node[vertex, label=left:\tiny{}](u3) at (6,2){};
  \node[vertex, label=left:\tiny{}](u4) at (6,1){};
  \node[vertex, label=left:\tiny{}](u6) at (6.5,1){};


 \draw[color=black] 
   (r)--(u1) (r)--(u2)  (r)--(u3)--(u4) (u3)--(u6);

\node at (4,3) {$vv.$};

\end{tikzpicture}
\end{center}

\end {figure}

\pagebreak

If $\deg(v)=4$ we only need to consider the following tree:

\begin{figure}[ht]
\begin{center}
\begin{tikzpicture}[scale=1]
\tikzstyle{vertex}=[circle, draw, inner sep=0pt, minimum size=6pt]
\tikzstyle{vert}=[circle,fill=black,inner sep=3pt]
\tikzstyle{overt}=[circle,fill=black!30, inner sep=3pt]
\tikzstyle{root}=[rectangle,fill=black,inner sep=3pt]

  \node[root, label=above:\tiny{}] (r) at (2,3) {};
  \node[vertex, label=left:\tiny{}](u1) at (1,2){};
  \node[vertex, label=left:\tiny{}](u2) at (2,2){};
  \node[vertex, label=left:\tiny{}](u3) at (3,2){};
  \node[vertex, label=left:\tiny{}](u4) at (4,2){};
  \node[vertex, label=left:\tiny{}](u5) at (4,1){};

 \draw[color=black] 
   (u1)--(r)--(u2) (r)--(u3)  (r)--(u4)--(u5);

\node at (1,3) {$ww.$};
\end{tikzpicture}
\end{center}

\end {figure}

If $\deg(v)=5$ we have the following tree:

\begin{figure}[ht]
\begin{center}
\begin{tikzpicture}[scale=1]
\tikzstyle{vertex}=[circle, draw, inner sep=0pt, minimum size=6pt]
\tikzstyle{vert}=[circle,fill=black,inner sep=3pt]
\tikzstyle{overt}=[circle,fill=black!30, inner sep=3pt]
\tikzstyle{root}=[rectangle,fill=black,inner sep=3pt]

  \node[root, label=above:\tiny{}] (r) at (2,3) {};
  \node[vertex, label=left:\tiny{}](u1) at (1,2){};
  \node[vertex, label=left:\tiny{}](u2) at (2,2){};
  \node[vertex, label=left:\tiny{}](u3) at (3,2){};
  \node[vertex, label=left:\tiny{}](u4) at (4,2){};
  \node[vertex, label=left:\tiny{}](u5) at (0,2){};

 \draw[color=black] 
   (u1)--(r)--(u2) (r)--(u3)  (u5)--(r)--(u4);

\node at (1,3) {$xx.$};
\end{tikzpicture}
\end{center}

\end {figure}

For each of the above rooted trees $oo$ to $xx$, we produce two $7$-cordial labelings with no repeated vertex label and two minority weights, which will both be different in the two labelings. If a minority weight of $T_0$ is one of the two minority weights in the labelings of $T_1$, then we use the other. If the minority weight of $T_0$ is neither of the two minority weights in the labelings of $T_1$, we use either labeling. We again follow the notation in the labelings of the previous cases. These descriptions can be found in List $7$ in the Addendum posted on arXiv.

Next, we consider case $(ii)$ of Proposition \ref{split5}.  We produce three labelings of the rooted trees $T1, T2, T3, T4$ so that no weight is repeated, and there are three labels not present.  In these 3 distinct labelings, each label is in the minority in at least one of them.  These can be found in List $8$ in the Addendum posted on arXiv.

Since $T_0$ is of order $7m+1$, we can label it $7$-cordially by the induction hypothesis and Hovey's lemma. This means that no weight on $T_0$ is in the minority and one label is in the majority. For the minority label in one of the cordial labelings of the rooted tree $T_1$, we choose the majority label of $T_0$. Pasting $T$ back together produces a $7$-cordial labeling of $T$.

Finally, we consider case $(iii)$ of Proposition \ref{split5}.  We label $F1$ so that zero appears on the left root and consider all other possible labels on the other root. For each of these root label pairs we produce two $7$-cordial labelings with no repeated vertex label and three weights in the minority, which will be different in the two labelings. If the minority weight of $T_0$ is one of the minority weights in one of the labelings of $F1$, then we use the other. If the minority weight of $T_0$ is neither of the minority weights in the labelings of $F1$, we use either labeling. We follow the notation in the labelings of the previous cases. These labelings can be found in List $9$ in the Addendum posted on arXiv.
\medskip

\end{proof}

\newpage
\appendix
\section{}
\begin{figure}[ht]
\begin{center}
\begin{tikzpicture}[scale=0.9]
\tikzstyle{vertex}=[circle, draw, inner sep=0pt, minimum size=6pt]
\tikzstyle{vert}=[circle,fill=black,inner sep=3pt]
\tikzstyle{overt}=[circle,fill=black!30, inner sep=3pt]
\tikzstyle{root}=[rectangle,fill=black,inner sep=3pt]

\node at (0,101) {Rooted trees on 6 verticies from (2,3) critical pairs:};

\node at (0,100) {$T9$};
\node[root, label=above:\tiny{}] (r) at (1,100) {};
\node[vertex, label=above:\tiny{}] (u1) at (0,99) {};
\node[vertex, label=above:\tiny{}] (u2) at (2,99) {};
\node[vertex, label=above:\tiny{}] (u3) at (-0.5,98) {};
\node[vertex, label=above:\tiny{}] (u4) at (0.5,98) {};
\node[vertex, label=above:\tiny{}] (u5) at (1.5,98) {};
\node[vertex, label=above:\tiny{}] (u6) at (2.5,98) {};

\draw[color=black]
(r)--(u1)--(u3) (u1)--(u4) (r)--(u2)--(u5) (u2)--(u6);

\node at (5,100) {$T10$};
\node[root, label=above:\tiny{}] (r) at (6,100) {};
\node[vertex, label=above:\tiny{}] (u1) at (6,99) {};
\node[vertex, label=above:\tiny{}] (u2) at (4,98) {};
\node[vertex, label=above:\tiny{}] (u3) at (5,98) {};
\node[vertex, label=above:\tiny{}] (u4) at (6,98) {};
\node[vertex, label=above:\tiny{}] (u5) at (7,98) {};
\node[vertex, label=above:\tiny{}] (u6) at (8,98) {};

\draw[color=black]
(r)--(u1)--(u2) (u1)--(u3) (u1)--(u4) (u1)--(u5) (u1)--(u6);


\node at (0,95) {$T11$};
\node[root, label=above:\tiny{}] (r) at (1,95) {};
\node[vertex, label=above:\tiny{}] (u1) at (0,94) {};
\node[vertex, label=above:\tiny{}] (u2) at (1,94) {};
\node[vertex, label=above:\tiny{}] (u3) at (2,94) {};
\node[vertex, label=above:\tiny{}] (u4) at (0,93) {};
\node[vertex, label=above:\tiny{}] (u5) at (1,93) {};
\node[vertex, label=above:\tiny{}] (u6) at (2,93) {};

\draw[color=black]
(r)--(u1)--(u4) (r)--(u2)--(u5) (r)--(u3)--(u6);

\node at (5,95) {$T12$};
\node[root, label=above:\tiny{}] (r) at (6,95) {};
\node[vertex, label=above:\tiny{}] (u1) at (5,94) {};
\node[vertex, label=above:\tiny{}] (u2) at (4,93) {};
\node[vertex, label=above:\tiny{}] (u3) at (5,93) {};
\node[vertex, label=above:\tiny{}] (u4) at (6,93) {};
\node[vertex, label=above:\tiny{}] (u5) at (7,94) {};
\node[vertex, label=above:\tiny{}] (u6) at (7,93) {};

\draw[color=black]
(r)--(u1)--(u2) (u1)--(u3) (u1)--(u4) (r)--(u5)--(u6);

\node at (0,91) {Rooted forests on 7 verticies from (2,3) critical pairs:};


\node at (0,90) {$F6$};
\node[root, label=above:\tiny{}] (r1) at (1,90) {};
\node[root, label=above:\tiny{}] (r2) at (2,90) {};
\node[vertex, label=above:\tiny{}] (u1) at (1,89) {};
\node[vertex, label=above:\tiny{}] (u2) at (-0.5,88) {};
\node[vertex, label=above:\tiny{}] (u3) at (0.5,88) {};
\node[vertex, label=above:\tiny{}] (u4) at (1.5,88) {};
\node[vertex, label=above:\tiny{}] (u5) at (2.5,88) {};
\node[vertex, label=above:\tiny{}] (u6) at (1.5,89) {};
\node[vertex, label=above:\tiny{}] (u7) at (2.5,89) {};

\draw[color=black]
(r1)--(u1)--(u2) (u1)--(u3) (u1)--(u4) (u1)--(u5) (r2)--(u6) (r2)--(u7);

\node at (3,90) {$F7$};
\node[root, label=above:\tiny{}] (r1) at (4,90) {};
\node[root, label=above:\tiny{}] (r2) at (6,90) {};
\node[vertex, label=above:\tiny{}] (u1) at (4,89) {};
\node[vertex, label=above:\tiny{}] (u2) at (3.5,88) {};
\node[vertex, label=above:\tiny{}] (u3) at (4,88) {};
\node[vertex, label=above:\tiny{}] (u4) at (4.5,88) {};
\node[vertex, label=above:\tiny{}] (u5) at (5.5,89) {};
\node[vertex, label=above:\tiny{}] (u6) at (6,89) {};
\node[vertex, label=above:\tiny{}] (u7) at (6.5,89) {};

\draw[color=black]
(r1)--(u1)--(u2) (u1)--(u3) (u1)--(u4) (r2)--(u5) (r2)--(u6) (r2)--(u7);

\end{tikzpicture}
\end{center}
\end{figure}
Rooted trees on $6$ verticiesfrom $(3,4)$ critical pairs:
\begin{figure}[ht]
\begin{center}
\begin{tikzpicture}[scale=1]
\tikzstyle{vertex}=[circle, draw, inner sep=0pt, minimum size=6pt]
\tikzstyle{vert}=[circle,fill=black,inner sep=3pt]
\tikzstyle{overt}=[circle,fill=black!30, inner sep=3pt]
\tikzstyle{root}=[rectangle,fill=black,inner sep=3pt]


\node at (0,100) {$T13$};
\node[root, label=above:\tiny{}] (r) at (1,100) {};
\node[vertex, label=above:\tiny{}] (u1) at (1,99) {};
\node[vertex, label=above:\tiny{}] (u2) at (0,98) {};
\node[vertex, label=above:\tiny{}] (u3) at (0,97) {};
\node[vertex, label=above:\tiny{}] (u4) at (1,98) {};
\node[vertex, label=above:\tiny{}] (u5) at (1,97) {};
\node[vertex, label=above:\tiny{}] (u6) at (2,98) {};

\draw[color=black]
(r)--(u1)--(u2)--(u3) (u1)--(u4)--(u5) (u1)--(u6);

\node at (5,100) {$T14$};
\node[root, label=above:\tiny{}] (r) at (6,100) {};
\node[vertex, label=above:\tiny{}] (u1) at (6,99) {};
\node[vertex, label=above:\tiny{}] (u2) at (4.5,98) {};
\node[vertex, label=above:\tiny{}] (u3) at (4.5,97) {};
\node[vertex, label=above:\tiny{}] (u4) at (5.5,98) {};
\node[vertex, label=above:\tiny{}] (u5) at (6.5,98) {};
\node[vertex, label=above:\tiny{}] (u6) at (7.5,98) {};

\draw[color=black]
(r)--(u1)--(u2)--(u3) (u1)--(u4) (u1)--(u5) (u1)--(u6);

\end{tikzpicture}
\end{center}
\end{figure}

Rooted forests on $7$ vertices from $(3,4)$ critial pairs:
\begin{figure}[ht]
\begin{center}
\begin{tikzpicture}[scale=1]
\tikzstyle{vertex}=[circle, draw, inner sep=0pt, minimum size=6pt]
\tikzstyle{vert}=[circle,fill=black,inner sep=3pt]
\tikzstyle{overt}=[circle,fill=black!30, inner sep=3pt]
\tikzstyle{root}=[rectangle,fill=black,inner sep=3pt]



\node at (0,95) {$F8$};
\node[root, label=above:\tiny{}] (r1) at (1,95) {};
\node[root, label=above:\tiny{}] (r2) at (2.5,95) {};
\node[vertex, label=above:\tiny{}] (u1) at (0.5,94) {};
\node[vertex, label=above:\tiny{}] (u2) at (0.5,93) {};
\node[vertex, label=above:\tiny{}] (u3) at (1.5,94) {};
\node[vertex, label=above:\tiny{}] (u4) at (1.5,93) {};
\node[vertex, label=above:\tiny{}] (u5) at (1.5,92) {};
\node[vertex, label=above:\tiny{}] (u6) at (2.5,94) {};
\node[vertex, label=above:\tiny{}] (u7) at (2.5,93) {};

\draw[color=black]
(r1)--(u1)--(u2) (r1)--(u3)--(u4)--(u5) (r2)--(u6)--(u7);

\node at (4,95) {$F9$};
\node[root, label=above:\tiny{}] (r1) at (5,95) {};
\node[root, label=above:\tiny{}] (r2) at (6.5,95) {};
\node[vertex, label=above:\tiny{}] (u1) at (4.5,94) {};
\node[vertex, label=above:\tiny{}] (u2) at (4,93) {};
\node[vertex, label=above:\tiny{}] (u3) at (5,93) {};
\node[vertex, label=above:\tiny{}] (u4) at (5.5,94) {};
\node[vertex, label=above:\tiny{}] (u5) at (5.5,93) {};
\node[vertex, label=above:\tiny{}] (u6) at (6.5,94) {};
\node[vertex, label=above:\tiny{}] (u7) at (6.5,93) {};

\draw[color=black]
(r1)--(u1)--(u2) (u1)--(u3) (r1)--(u4)--(u5) (r2)--(u6)--(u7);

\node at (8,95) {$F10$};
\node[root, label=above:\tiny{}] (r1) at (9,95) {};
\node[root, label=above:\tiny{}] (r2) at (10.5,95) {};
\node[vertex, label=above:\tiny{}] (u1) at (9,94) {};
\node[vertex, label=above:\tiny{}] (u2) at (8.5,93) {};
\node[vertex, label=above:\tiny{}] (u3) at (8.5,92) {};
\node[vertex, label=above:\tiny{}] (u4) at (9.5,93) {};
\node[vertex, label=above:\tiny{}] (u5) at (9.5,92) {};
\node[vertex, label=above:\tiny{}] (u6) at (10.5,94) {};
\node[vertex, label=above:\tiny{}] (u7) at (10.5,93) {};

\draw[color=black]
(r1)--(u1)--(u2)--(u3) (u1)--(u4)--(u5) (r2)--(u6)--(u7);


\node at (0,90) {$F11$};
\node[root, label=above:\tiny{}] (r1) at (1,90) {};
\node[root, label=above:\tiny{}] (r2) at (2.5,90) {};
\node[vertex, label=above:\tiny{}] (u1) at (1,89) {};
\node[vertex, label=above:\tiny{}] (u2) at (1,88) {};
\node[vertex, label=above:\tiny{}] (u3) at (1.5,88) {};
\node[vertex, label=above:\tiny{}] (u4) at (1,87) {};
\node[vertex, label=above:\tiny{}] (u5) at (1.5,87) {};
\node[vertex, label=above:\tiny{}] (u6) at (2.5,89) {};
\node[vertex, label=above:\tiny{}] (u7) at (2.5,88) {};

\draw[color=black]
(r1)--(u1)--(u2)--(u4) (u1)--(u3) (u2)--(u5) (r2)--(u6)--(u7);

\node at (4,90) {$F12$};
\node[root, label=above:\tiny{}] (r1) at (5,90) {};
\node[root, label=above:\tiny{}] (r2) at (6.5,90) {};
\node[vertex, label=above:\tiny{}] (u1) at (5,89) {};
\node[vertex, label=above:\tiny{}] (u2) at (4.5,88) {};
\node[vertex, label=above:\tiny{}] (u3) at (4.5,87) {};
\node[vertex, label=above:\tiny{}] (u4) at (5,88) {};
\node[vertex, label=above:\tiny{}] (u5) at (5.5,88) {};
\node[vertex, label=above:\tiny{}] (u6) at (6.5,89) {};
\node[vertex, label=above:\tiny{}] (u7) at (6.5,88) {};

\draw[color=black]
(r1)--(u1)--(u2)--(u3)  (u1)--(u4) (u1)--(u5)  (r2)--(u6)--(u7);

\node at (8,90) {$F13$};
\node[root, label=above:\tiny{}] (r1) at (9,90) {};
\node[root, label=above:\tiny{}] (r2) at (10.5,90) {};
\node[vertex, label=above:\tiny{}] (u1) at (9,89) {};
\node[vertex, label=above:\tiny{}] (u2) at (8.25,88) {};
\node[vertex, label=above:\tiny{}] (u3) at (8.75,88) {};
\node[vertex, label=above:\tiny{}] (u4) at (9.25,88) {};
\node[vertex, label=above:\tiny{}] (u5) at (9.75,88) {};
\node[vertex, label=above:\tiny{}] (u6) at (10.5,89) {};
\node[vertex, label=above:\tiny{}] (u7) at (10.5,88) {};

\draw[color=black]
(r1)--(u1)--(u2) (u1)--(u3) (u1)--(u4) (u1)--(u5) (r2)--(u6)--(u7);


\node at (0,85) {$F14$};
\node[root, label=above:\tiny{}] (r1) at (1,85) {};
\node[root, label=above:\tiny{}] (r2) at (2.5,85) {};
\node[vertex, label=above:\tiny{}] (u1) at (1,84) {};
\node[vertex, label=above:\tiny{}] (u2) at (1,83) {};
\node[vertex, label=above:\tiny{}] (u3) at (1,82) {};
\node[vertex, label=above:\tiny{}] (u4) at (1.5,83) {};
\node[vertex, label=above:\tiny{}] (u5) at (2.5,84) {};
\node[vertex, label=above:\tiny{}] (u6) at (2.25,83) {};
\node[vertex, label=above:\tiny{}] (u7) at (2.75,83) {};

\draw[color=black]
(r1)--(u1)--(u2)--(u3) (u1)--(u4) (r2)--(u5)--(u6) (u5)--(u7);

\node at (4,85) {$F15$};
\node[root, label=above:\tiny{}] (r1) at (5,85) {};
\node[root, label=above:\tiny{}] (r2) at (6.5,85) {};
\node[vertex, label=above:\tiny{}] (u1) at (5,84) {};
\node[vertex, label=above:\tiny{}] (u2) at (4.5,83) {};
\node[vertex, label=above:\tiny{}] (u3) at (5,83) {};
\node[vertex, label=above:\tiny{}] (u4) at (5.5,83) {};
\node[vertex, label=above:\tiny{}] (u5) at (6.5,84) {};
\node[vertex, label=above:\tiny{}] (u6) at (6.25,83) {};
\node[vertex, label=above:\tiny{}] (u7) at (6.75,83) {};

\draw[color=black]
(r1)--(u1)--(u2) (u1)--(u3) (u1)--(u4) (r2)--(u5)--(u6) (u5)--(u7);

\node at (8,85) {$F16$};
\node[root, label=above:\tiny{}] (r1) at (9,85) {};
\node[root, label=above:\tiny{}] (r2) at (10.5,85) {};
\node[vertex, label=above:\tiny{}] (u1) at (9,84) {};
\node[vertex, label=above:\tiny{}] (u2) at (9,83) {};
\node[vertex, label=above:\tiny{}] (u3) at (8.75,82) {};
\node[vertex, label=above:\tiny{}] (u4) at (9.25,82) {};
\node[vertex, label=above:\tiny{}] (u5) at (10.5,84) {};
\node[vertex, label=above:\tiny{}] (u6) at (10.25,83) {};
\node[vertex, label=above:\tiny{}] (u7) at (10.75,83) {};

\draw[color=black]
(r1)--(u1)--(u2)--(u3) (u2)--(u4) (r2)--(u5)--(u6) (u5)--(u7);

\end{tikzpicture}
\end{center}
\end{figure}

\begin{figure}[ht]
\begin{center}
\begin{tikzpicture}[scale=1]
\tikzstyle{vertex}=[circle, draw, inner sep=0pt, minimum size=6pt]
\tikzstyle{vert}=[circle,fill=black,inner sep=3pt]
\tikzstyle{overt}=[circle,fill=black!30, inner sep=3pt]
\tikzstyle{root}=[rectangle,fill=black,inner sep=3pt]

\node at (0,80) {$F17$};
\node[root, label=above:\tiny{}] (r1) at (1,80) {};
\node[root, label=above:\tiny{}] (r2) at (2.5,80) {};
\node[vertex, label=above:\tiny{}] (u1) at (.5,79) {};
\node[vertex, label=above:\tiny{}] (u2) at (.5,78) {};
\node[vertex, label=above:\tiny{}] (u3) at (1.5,79) {};
\node[vertex, label=above:\tiny{}] (u4) at (1.5,78) {};
\node[vertex, label=above:\tiny{}] (u5) at (2.5,79) {};
\node[vertex, label=above:\tiny{}] (u6) at (2.25,78) {};
\node[vertex, label=above:\tiny{}] (u7) at (2.75,78) {};

\draw[color=black]
(r1)--(u1)--(u2) (r1)--(u3)--(u4) (r2)--(u5)--(u6) (u5)--(u7);

\node at (4,80) {$F18$};
\node[root, label=above:\tiny{}] (r1) at (5,80) {};
\node[root, label=above:\tiny{}] (r2) at (6.5,80) {};
\node[vertex, label=above:\tiny{}] (u1) at (5,79) {};
\node[vertex, label=above:\tiny{}] (u2) at (4.5,78) {};
\node[vertex, label=above:\tiny{}] (u3) at (5,78) {};
\node[vertex, label=above:\tiny{}] (u4) at (5,77) {};
\node[vertex, label=above:\tiny{}] (u5) at (6.25,79) {};
\node[vertex, label=above:\tiny{}] (u6) at (6.75,79) {};
\node[vertex, label=above:\tiny{}] (u7) at (6.75,78) {};

\draw[color=black]
(r1)--(u1)--(u2) (u1)--(u3)--(u4) (r2)--(u5) (r2)--(u6)--(u7);

\node at (8,80) {$F19$};
\node[root, label=above:\tiny{}] (r1) at (9,80) {};
\node[root, label=above:\tiny{}] (r2) at (10.5,80) {};
\node[vertex, label=above:\tiny{}] (u1) at (9,79) {};
\node[vertex, label=above:\tiny{}] (u2) at (8.5,78) {};
\node[vertex, label=above:\tiny{}] (u3) at (9,78) {};
\node[vertex, label=above:\tiny{}] (u4) at (9.5,78) {};
\node[vertex, label=above:\tiny{}] (u5) at (10.25,79) {};
\node[vertex, label=above:\tiny{}] (u6) at (10.75,79) {};
\node[vertex, label=above:\tiny{}] (u7) at (10.75,78) {};

\draw[color=black]
(r1)--(u1)--(u2) (u1)--(u3) (u1)--(u4) (r2)--(u5) (r2)--(u6)--(u7);


\node at (0,75) {$F20$};
\node[root, label=above:\tiny{}] (r1) at (1,75) {};
\node[root, label=above:\tiny{}] (r2) at (2.5,75) {};
\node[vertex, label=above:\tiny{}] (u1) at (.5,74) {};
\node[vertex, label=above:\tiny{}] (u2) at (.5,73) {};
\node[vertex, label=above:\tiny{}] (u3) at (1.5,74) {};
\node[vertex, label=above:\tiny{}] (u4) at (1.5,73) {};
\node[vertex, label=above:\tiny{}] (u5) at (2.25,74) {};
\node[vertex, label=above:\tiny{}] (u6) at (2.75,74) {};
\node[vertex, label=above:\tiny{}] (u7) at (2.75,73) {};

\draw[color=black]
(r1)--(u1)--(u2) (r1)--(u3)--(u4) (r2)--(u5) (r2)--(u6)--(u7);

\node at (4,75) {$F21$};
\node[root, label=above:\tiny{}] (r1) at (5,75) {};
\node[root, label=above:\tiny{}] (r2) at (6.5,75) {};
\node[vertex, label=above:\tiny{}] (u1) at (4.5,74) {};
\node[vertex, label=above:\tiny{}] (u2) at (4,73) {};
\node[vertex, label=above:\tiny{}] (u3) at (5,73) {};
\node[vertex, label=above:\tiny{}] (u4) at (5.5,74) {};
\node[vertex, label=above:\tiny{}] (u5) at (6.5,74) {};
\node[vertex, label=above:\tiny{}] (u6) at (6.25,73) {};
\node[vertex, label=above:\tiny{}] (u7) at (6.75,73) {};

\draw[color=black]
(r1)--(u1)--(u2) (u1)--(u3) (r1)--(u4) (r2)--(u5)--(u6) (u5)--(u7);

\node at (8,75) {$F22$};
\node[root, label=above:\tiny{}] (r1) at (9,75) {};
\node[root, label=above:\tiny{}] (r2) at (10.5,75) {};
\node[vertex, label=above:\tiny{}] (u1) at (9,74) {};
\node[vertex, label=above:\tiny{}] (u2) at (9,73) {};
\node[vertex, label=above:\tiny{}] (u3) at (8.5,72) {};
\node[vertex, label=above:\tiny{}] (u4) at (9,72) {};
\node[vertex, label=above:\tiny{}] (u5) at (9.5,72) {};
\node[vertex, label=above:\tiny{}] (u6) at (10.5,74) {};
\node[vertex, label=above:\tiny{}] (u7) at (10.5,73) {};

\draw[color=black]
(r1)--(u1)--(u2)--(u3) (u2)--(u4) (u2)--(u5) (r2)--(u6)--(u7);


\node at (0,70) {$F23$};
\node[root, label=above:\tiny{}] (r1) at (1,70) {};
\node[root, label=above:\tiny{}] (r2) at (2.5,70) {};
\node[vertex, label=above:\tiny{}] (u1) at (1,69) {};
\node[vertex, label=above:\tiny{}] (u2) at (.5,68) {};
\node[vertex, label=above:\tiny{}] (u3) at (1.5,68) {};
\node[vertex, label=above:\tiny{}] (u4) at (1,68) {};
\node[vertex, label=above:\tiny{}] (u5) at (1,67) {};
\node[vertex, label=above:\tiny{}] (u6) at (2.5,69) {};
\node[vertex, label=above:\tiny{}] (u7) at (2.5,68) {};

\draw[color=black]
(r1)--(u1)--(u2) (u1)--(u3) (u1)--(u4)--(u5) (r2)--(u6)--(u7);

\end{tikzpicture}
\end{center}
\end{figure}
\clearpage
There are no trees on $6$ vertices that arise from the $(4,5)$ critical pairs. \
Rooted forests on 7 vertices from $(4,5)$ critical pairs:
\begin{figure}[ht]
\begin{center}
\begin{tikzpicture}[scale=1]
\tikzstyle{vertex}=[circle, draw, inner sep=0pt, minimum size=6pt]
\tikzstyle{vert}=[circle,fill=black,inner sep=3pt]
\tikzstyle{overt}=[circle,fill=black!30, inner sep=3pt]
\tikzstyle{root}=[rectangle,fill=black,inner sep=3pt]


\node at (0,95) {$F24$};
\node[root, label=above:\tiny{}] (r1) at (1,95) {};
\node[root, label=above:\tiny{}] (r2) at (2.5,95) {};
\node[vertex, label=above:\tiny{}] (u1) at (1,94) {};
\node[vertex, label=above:\tiny{}] (u2) at (1,93) {};
\node[vertex, label=above:\tiny{}] (u3) at (1,92) {};
\node[vertex, label=above:\tiny{}] (u4) at (1,91) {};
\node[vertex, label=above:\tiny{}] (u5) at (2.5,94) {};
\node[vertex, label=above:\tiny{}] (u6) at (2.5,93) {};
\node[vertex, label=above:\tiny{}] (u7) at (2.5,92) {};

\draw[color=black]
(r1)--(u1)--(u2)--(u3)--(u4) (r2)--(u5)--(u6)--(u7);

\node at (4,95) {$F25$};
\node[root, label=above:\tiny{}] (r1) at (5,95) {};
\node[root, label=above:\tiny{}] (r2) at (6.5,95) {};
\node[vertex, label=above:\tiny{}] (u1) at (5,94) {};
\node[vertex, label=above:\tiny{}] (u2) at (5,93) {};
\node[vertex, label=above:\tiny{}] (u3) at (5,92) {};
\node[vertex, label=above:\tiny{}] (u4) at (5,91) {};
\node[vertex, label=above:\tiny{}] (u5) at (6.5,94) {};
\node[vertex, label=above:\tiny{}] (u6) at (6.25,93) {};
\node[vertex, label=above:\tiny{}] (u7) at (6.75,93) {};

\draw[color=black]
(r1)--(u1)--(u2)--(u3)--(u4) (r2)--(u5)--(u6) (u5)--(u7);

\node at (8,95) {$F26$};
\node[root, label=above:\tiny{}] (r1) at (9,95) {};
\node[root, label=above:\tiny{}] (r2) at (10.5,95) {};
\node[vertex, label=above:\tiny{}] (u1) at (9,94) {};
\node[vertex, label=above:\tiny{}] (u2) at (9,93) {};
\node[vertex, label=above:\tiny{}] (u3) at (9,92) {};
\node[vertex, label=above:\tiny{}] (u4) at (9,91) {};
\node[vertex, label=above:\tiny{}] (u5) at (10.25,94) {};
\node[vertex, label=above:\tiny{}] (u6) at (10.75,94) {};
\node[vertex, label=above:\tiny{}] (u7) at (10.75,93) {};

\draw[color=black]
(r1)--(u1)--(u2)--(u3)--(u4) (r2)--(u5) (r2)--(u6)--(u7);


\node at (0,90) {$F27$};
\node[root, label=above:\tiny{}] (r1) at (1,90) {};
\node[root, label=above:\tiny{}] (r2) at (2.5,90) {};
\node[vertex, label=above:\tiny{}] (u1) at (1,89) {};
\node[vertex, label=above:\tiny{}] (u2) at (1,88) {};
\node[vertex, label=above:\tiny{}] (u3) at (1,87) {};
\node[vertex, label=above:\tiny{}] (u4) at (1,86) {};
\node[vertex, label=above:\tiny{}] (u5) at (2.,89) {};
\node[vertex, label=above:\tiny{}] (u6) at (2.5,89) {};
\node[vertex, label=above:\tiny{}] (u7) at (3,89) {};

\draw[color=black]
(r1)--(u1)--(u2)--(u3)--(u4) (r2)--(u5) (r2)--(u6) (r2)--(u7);

\node at (4,90) {$F28$};
\node[root, label=above:\tiny{}] (r1) at (5,90) {};
\node[root, label=above:\tiny{}] (r2) at (6.5,90) {};
\node[vertex, label=above:\tiny{}] (u1) at (5,89) {};
\node[vertex, label=above:\tiny{}] (u2) at (5,88) {};
\node[vertex, label=above:\tiny{}] (u3) at (5,87) {};
\node[vertex, label=above:\tiny{}] (u4) at (5,86) {};
\node[vertex, label=above:\tiny{}] (u5) at (5.5,86) {};
\node[vertex, label=above:\tiny{}] (u6) at (6.5,89) {};
\node[vertex, label=above:\tiny{}] (u7) at (6.5,88) {};

\draw[color=black]
(r1)--(u1)--(u2)--(u3)--(u4) (u3)--(u5) (r2)--(u6)--(u7);

\node at (8,90) {$F29$};
\node[root, label=above:\tiny{}] (r1) at (9,90) {};
\node[root, label=above:\tiny{}] (r2) at (10.5,90) {};
\node[vertex, label=above:\tiny{}] (u1) at (9,89) {};
\node[vertex, label=above:\tiny{}] (u2) at (9,88) {};
\node[vertex, label=above:\tiny{}] (u3) at (9,87) {};
\node[vertex, label=above:\tiny{}] (u4) at (9,86) {};
\node[vertex, label=above:\tiny{}] (u5) at (9.5,86) {};
\node[vertex, label=above:\tiny{}] (u6) at (10.25,89) {};
\node[vertex, label=above:\tiny{}] (u7) at (10.75,89) {};

\draw[color=black]
(r1)--(u1)--(u2)--(u3)--(u4) (u3)--(u5) (r2)--(u6) (r2)--(u7);


\node at (0,85) {$F30$};
\node[root, label=above:\tiny{}] (r1) at (1,85) {};
\node[root, label=above:\tiny{}] (r2) at (2.5,85) {};
\node[vertex, label=above:\tiny{}] (u1) at (1,84) {};
\node[vertex, label=above:\tiny{}] (u2) at (1,83) {};
\node[vertex, label=above:\tiny{}] (u3) at (1,82) {};
\node[vertex, label=above:\tiny{}] (u4) at (1,81) {};
\node[vertex, label=above:\tiny{}] (u5) at (0.5,82) {};
\node[vertex, label=above:\tiny{}] (u6) at (2.5,84) {};
\node[vertex, label=above:\tiny{}] (u7) at (2.5,83) {};

\draw[color=black]
(r1)--(u1)--(u2)--(u3)--(u4) (u2)--(u5) (r2)--(u6)--(u7);

\node at (4,85) {$F31$};
\node[root, label=above:\tiny{}] (r1) at (5,85) {};
\node[root, label=above:\tiny{}] (r2) at (6.5,85) {};
\node[vertex, label=above:\tiny{}] (u1) at (5,84) {};
\node[vertex, label=above:\tiny{}] (u2) at (5,83) {};
\node[vertex, label=above:\tiny{}] (u3) at (5,82) {};
\node[vertex, label=above:\tiny{}] (u4) at (5,81) {};
\node[vertex, label=above:\tiny{}] (u5) at (5.5,82) {};
\node[vertex, label=above:\tiny{}] (u6) at (6.25,84) {};
\node[vertex, label=above:\tiny{}] (u7) at (6.75,84) {};

\draw[color=black]
(r1)--(u1)--(u2)--(u3)--(u4) (u2)--(u5) (r2)--(u6) (r2)--(u7);

\node at (8,85) {$F32$};
\node[root, label=above:\tiny{}] (r1) at (9,85) {};
\node[root, label=above:\tiny{}] (r2) at (10.5,85) {};
\node[vertex, label=above:\tiny{}] (u1) at (9,84) {};
\node[vertex, label=above:\tiny{}] (u2) at (9,83) {};
\node[vertex, label=above:\tiny{}] (u3) at (9,82) {};
\node[vertex, label=above:\tiny{}] (u4) at (9,81) {};
\node[vertex, label=above:\tiny{}] (u5) at (8.5,83) {};
\node[vertex, label=above:\tiny{}] (u6) at (10.5,84) {};
\node[vertex, label=above:\tiny{}] (u7) at (10.5,83) {};

\draw[color=black]
(r1)--(u1)--(u2)--(u3)--(u4) (u1)--(u5) (r2)--(u6) (r2)--(u7);

\end{tikzpicture}
\end{center}
\end{figure}

\begin{figure}[ht]
\begin{center}
\begin{tikzpicture}[scale=1]
\tikzstyle{vertex}=[circle, draw, inner sep=0pt, minimum size=6pt]
\tikzstyle{vert}=[circle,fill=black,inner sep=3pt]
\tikzstyle{overt}=[circle,fill=black!30, inner sep=3pt]
\tikzstyle{root}=[rectangle,fill=black,inner sep=3pt]


\node at (0,80) {$F33$};
\node[root, label=above:\tiny{}] (r1) at (1,80) {};
\node[root, label=above:\tiny{}] (r2) at (2.5,80) {};
\node[vertex, label=above:\tiny{}] (u1) at (1,79) {};
\node[vertex, label=above:\tiny{}] (u2) at (1,78) {};
\node[vertex, label=above:\tiny{}] (u3) at (1,77) {};
\node[vertex, label=above:\tiny{}] (u4) at (1,76) {};
\node[vertex, label=above:\tiny{}] (u5) at (1.5,78) {};
\node[vertex, label=above:\tiny{}] (u6) at (2.25,79) {};
\node[vertex, label=above:\tiny{}] (u7) at (2.75,79) {};

\draw[color=black]
(r1)--(u1)--(u2)--(u3)--(u4) (u1)--(u5) (r2)--(u6) (r2)--(u7);

\node at (4,80) {$F34$};
\node[root, label=above:\tiny{}] (r1) at (5,80) {};
\node[root, label=above:\tiny{}] (r2) at (6.5,80) {};
\node[vertex, label=above:\tiny{}] (u1) at (4.75,79) {};
\node[vertex, label=above:\tiny{}] (u2) at (4.75,78) {};
\node[vertex, label=above:\tiny{}] (u3) at (4.25,77) {};
\node[vertex, label=above:\tiny{}] (u4) at (5.25,77) {};
\node[vertex, label=above:\tiny{}] (u5) at (5.5,79) {};
\node[vertex, label=above:\tiny{}] (u6) at (6.5,79) {};
\node[vertex, label=above:\tiny{}] (u7) at (6.5,78) {};

\draw[color=black]
(r1)--(u1)--(u2)--(u3) (u2)--(u4) (r1)--(u5) (r2)--(u6)--(u7);

\node at (8,80) {$F35$};
\node[root, label=above:\tiny{}] (r1) at (9,80) {};
\node[root, label=above:\tiny{}] (r2) at (10.5,80) {};
\node[vertex, label=above:\tiny{}] (u1) at (8.5,79) {};
\node[vertex, label=above:\tiny{}] (u2) at (8.,78) {};
\node[vertex, label=above:\tiny{}] (u3) at (8.,77) {};
\node[vertex, label=above:\tiny{}] (u4) at (9,78) {};
\node[vertex, label=above:\tiny{}] (u5) at (9.5,79) {};
\node[vertex, label=above:\tiny{}] (u6) at (10.5,79) {};
\node[vertex, label=above:\tiny{}] (u7) at (10.5,78) {};

\draw[color=black]
(r1)--(u1)--(u2)--(u3) (u1)--(u4) (r1)--(u5) (r2)--(u6)--(u7);


\node at (0,75) {$F36$};
\node[root, label=above:\tiny{}] (r1) at (1,75) {};
\node[root, label=above:\tiny{}] (r2) at (2.5,75) {};
\node[vertex, label=above:\tiny{}] (u1) at (1,74) {};
\node[vertex, label=above:\tiny{}] (u2) at (1,73) {};
\node[vertex, label=above:\tiny{}] (u3) at (1,72) {};
\node[vertex, label=above:\tiny{}] (u4) at (.5,74) {};
\node[vertex, label=above:\tiny{}] (u5) at (1.5,74) {};
\node[vertex, label=above:\tiny{}] (u6) at (2.5,74) {};
\node[vertex, label=above:\tiny{}] (u7) at (2.5,73) {};

\draw[color=black]
(r1)--(u1)--(u2)--(u3) (r1)--(u4) (r1)--(u5) (r2)--(u6)--(u7);

\end{tikzpicture}
\end{center}
\end{figure}
There are no trees on $6$ vertices that arise from the $(5,6)$ critical pairs. \
Rooted forests on 7 vertices from $(5,6)$ critical pairs:

\begin{figure}[ht]
\begin{center}
\begin{tikzpicture}[scale=1]
\tikzstyle{vertex}=[circle, draw, inner sep=0pt, minimum size=6pt]
\tikzstyle{vert}=[circle,fill=black,inner sep=3pt]
\tikzstyle{overt}=[circle,fill=black!30, inner sep=3pt]
\tikzstyle{root}=[rectangle,fill=black,inner sep=3pt]


\node at (0,70) {$F37$};
\node[root, label=above:\tiny{}] (r1) at (1,70) {};
\node[root, label=above:\tiny{}] (r2) at (2.5,70) {};
\node[vertex, label=above:\tiny{}] (u1) at (1,69) {};
\node[vertex, label=above:\tiny{}] (u2) at (1,68) {};
\node[vertex, label=above:\tiny{}] (u3) at (1,67) {};
\node[vertex, label=above:\tiny{}] (u4) at (1,66) {};
\node[vertex, label=above:\tiny{}] (u5) at (1.,65) {};
\node[vertex, label=above:\tiny{}] (u6) at (2.5,69) {};
\node[vertex, label=above:\tiny{}] (u7) at (2.5,68) {};

\draw[color=black]
(r1)--(u1)--(u2)--(u3)--(u4)--(u5) (r2)--(u6)--(u7);

\node at (4,70) {$F38$};
\node[root, label=above:\tiny{}] (r1) at (5,70) {};
\node[root, label=above:\tiny{}] (r2) at (6.5,70) {};
\node[vertex, label=above:\tiny{}] (u1) at (5,69) {};
\node[vertex, label=above:\tiny{}] (u2) at (5,68) {};
\node[vertex, label=above:\tiny{}] (u3) at (5,67) {};
\node[vertex, label=above:\tiny{}] (u4) at (5,66) {};
\node[vertex, label=above:\tiny{}] (u5) at (5.,65) {};
\node[vertex, label=above:\tiny{}] (u6) at (6.25,69) {};
\node[vertex, label=above:\tiny{}] (u7) at (6.75,69) {};

\draw[color=black]
(r1)--(u1)--(u2)--(u3)--(u4)--(u5) (r2)--(u6) (r2)--(u7);

\node at (8,70) {$F39$};
\node[root, label=above:\tiny{}] (r1) at (9,70) {};
\node[root, label=above:\tiny{}] (r2) at (10.5,70) {};
\node[vertex, label=above:\tiny{}] (u1) at (8.75,69) {};
\node[vertex, label=above:\tiny{}] (u2) at (8.75,68) {};
\node[vertex, label=above:\tiny{}] (u3) at (8.75,67) {};
\node[vertex, label=above:\tiny{}] (u4) at (8.75,66) {};
\node[vertex, label=above:\tiny{}] (u5) at (9.25,69) {};
\node[vertex, label=above:\tiny{}] (u6) at (10.5,69) {};
\node[vertex, label=above:\tiny{}] (u7) at (10.5,68) {};

\draw[color=black]
(r1)--(u1)--(u2)--(u3)--(u4) (r1)--(u5) (r2)--(u6)--(u7);


\node at (0,64) {$F40$};
\node[root, label=above:\tiny{}] (r1) at (1,64) {};
\node[root, label=above:\tiny{}] (r2) at (2.5,64) {};
\node[vertex, label=above:\tiny{}] (u1) at (.75,63) {};
\node[vertex, label=above:\tiny{}] (u2) at (.75,62) {};
\node[vertex, label=above:\tiny{}] (u3) at (1.25,63) {};
\node[vertex, label=above:\tiny{}] (u4) at (1.25,62) {};
\node[vertex, label=above:\tiny{}] (u5) at (2.5,63) {};
\node[vertex, label=above:\tiny{}] (u6) at (2.5,62) {};
\node[vertex, label=above:\tiny{}] (u7) at (2.5,61) {};

\draw[color=black]
(r1)--(u1)--(u2) (r1)--(u3)--(u4) (r2)--(u5)--(u6)--(u7);

\node at (4,64) {$F41$};
\node[root, label=above:\tiny{}] (r1) at (5,64) {};
\node[root, label=above:\tiny{}] (r2) at (6.5,64) {};
\node[vertex, label=above:\tiny{}] (u1) at (5,63) {};
\node[vertex, label=above:\tiny{}] (u2) at (5,62) {};
\node[vertex, label=above:\tiny{}] (u3) at (5,61) {};
\node[vertex, label=above:\tiny{}] (u4) at (4.5,62) {};
\node[vertex, label=above:\tiny{}] (u5) at (5.5,62) {};
\node[vertex, label=above:\tiny{}] (u6) at (6.25,63) {};
\node[vertex, label=above:\tiny{}] (u7) at (6.75,63) {};

\draw[color=black]
(r1)--(u1)--(u2)--(u3) (u1)--(u4) (u1)--(u5) (r2)--(u6) (r2)--(u7);

\end{tikzpicture}
\end{center}
\end{figure}

\end{document}


\title{%
Addendum:  All trees are seven-cordial}
\author{Keith Driscoll}
\address{Keith Driscoll (\tt keithdriscoll@clayton.edu)}
\date{\today}
\maketitle
Seven cordial labelings for trees and forests in ``All Trees are Seven-Cordial''. \\

\underline{\emph{List $1$:}}\

\noindent $a.\,\, 0,6,3,5,0,2,4,1;$ no majority weight.\\

\noindent $b.\,\, 0,3,5,6,1,2,0,4;$ no majority weight.\\

\noindent $c.\,\, 0,1,2,4,5,3,0,6;$ no majority weight.\\

\noindent $d.\,\, 0,0,5,4,1,2,6,3;$ no majority weight.\\

\noindent $e.\,\, 0,2,0,6,3,1,4,5;$ no majority weight.\\

\noindent $f.\,\, 0,5,3,4,1,2,6,0;$ no majority weight.\\

\noindent $g.\,\, 0,5,1,4,3,6,2,0;$ no majority weight.\\

\noindent $h.\,\, 0,2,0,6,5,4,3,1;$ majority weight $=0$.\\
$h.\,\, 0,0,1,2,3,4,5,6;$ majority weight $=1$.\\
$h.\,\, 0,6,0,5,1,2,3,4;$ majority weight $=2$.\\
$h.\,\, 0,0,1,3,2,4,5,6;$ majority weight $=3$.\\
$h.\,\, 0,2,0,1,4,5,6,3;$ majority weight $=4$.\\
$h.\,\, 0,4,5,6,0,1,2,3;$ majority weight $=5$.\\
$h.\,\, 0,0,1,6,5,2,4,3;$ majority weight $=6$.\\

\noindent $i.\,\, 0,0,1,6,5,2,3,4;$ majority weight $=0$.\\
$i.\,\, 0,0,1,4,6,2,3,5;$ majority weight $=1$.\\
$i.\,\, 0,2,0,3,6,4,5,1;$ majority weight $=2$.\\
$i.\,\, 0,3,4,0,2,5,1,6;$ majority weight $=3$.\\
$i.\,\, 0,2,3,6,5,1,4,0;$ majority weight $=4$.\\
$i.\,\, 0,0,1,4,5,2,3,6;$ majority weight $=5$.\\
$i.\,\, 0,5,6,1,3,4,0,2;$ majority weight $=6$.\\

\noindent $j.\,\, 0,2,1,3,5,6,0,4;$ majority weight $=0$.\\
$j.\,\, 0,2,1,3,5,0,4,6;$ majority weight $=1$.\\
$j.\,\, 0,1,2,5,6,0,3,4;$ majority weight $=2$.\\
$j.\,\, 0,1,3,5,6,0,2,4;$ majority weight $=3$.\\
$j.\,\, 0,1,4,5,6,0,3,2;$ majority weight $=4$.\\
$j.\,\, 0,0,2,5,6,1,3,4;$ majority weight $=5$.\\
$j.\,\, 0,2,6,3,5,0,4,1;$ majority weight $=6$.\\

\noindent $k.\,\, 0,6,0,4,5,2,1,3;$ no majority weight.\\

\noindent $\ell.\,\,0,1,0,5,6,3,4,2;$ majority weight $=0$.\\
$\ell.\,\, 0,2,0,4,6,1,3,5;$ majority weight $=1$.\\
$\ell.\,\, 0,5,0,4,6,1,3,2;$ majority weight $=2$.\\
$\ell.\,\, 0,4,0,2,6,1,3,5;$ majority weight $=3$.\\
$\ell.\,\, 0,3,0,1,5,6,2,4;$ majority weight $=4$.\\
$\ell.\,\, 0,0,1,5,6,2,3,4;$ majority weight $=5$.\\
$\ell.\,\, 0,1,0,5,4,3,6,2;$ majority weight $=6$.\\

\noindent $m.\,\, 0,4,6,3,2,0,1,5;$ no majority weight.\\

\noindent $n.\,\, 0,2,3,6,1,4,5,0;$ no majority weight.\\

\noindent $o.\,\, 0,3,5,1,2,0,4,6;$ no majority weight.\\

\noindent $p.\,\, 0,3,0,1,5,2,6,4;$ no majority weight.\\

\noindent $q\,\, 0,6,0,4,2,1,5,3;$ no majority weight.\\

\noindent $r.\,\, 0,1,0,6,2,3,4,5;$ majority weight $=0$.\\
$r.\,\, 0,2,0,6,5,4,3,1;$ majority weight $=1$.\\
$r.\,\, 0,3,0,6,5,4,2,1;$ majority weight $=2$.\\
$r.\,\, 0,1,0,2,3,4,5,6;$ majority weight $=3$.\\
$r.\,\, 0,1,0,3,2,4,5,6;$ majority weight $=4$.\\
$r.\,\, 0,1,0,4,2,3,5,6;$ majority weight $=5$.\\
$r.\,\, 0,1,0,5,2,3,4,6;$ majority weivht $=6$.\\

\noindent $s.\,\, 0,0,4,6,1,2,3,5;$ no majority weight.\\

\noindent $t.\,\, 0,6,1,2,4,3,5,0;$ no majority weight.\\

\noindent $u.\,\, 0,0,3,2,5,1,4,6;$ no majority weight.\\

\noindent $v.\,\, 0,0,6,1,5,2,4,3;$ no majority weight.\\

\noindent $w.\,\, 0,6,0,1,5,2,3,4;$ no majority weight.\\

\noindent $x.\,\, 0,0,4,3,1,2,6,5;$ no majority weight.\\

\noindent $y.\,\, 0,6,2,1,5,4,0,3;$ no majority weight.\\

\noindent $z.\,\, 0,4,6,2,1,3,0,5;$ no majority weight.\\

\noindent $aa.\,\, 0,1,5,6,2,3,4,0;$ no majority weight.\\

\noindent $bb.\,\, 0,2,0,1,3,4,5,6;$ majority weight $=0$.\\
$bb.\,\, 0,5,0,6,1,2,3,4;$ majority weight $=1$.\\
$bb.\,\, 0,6,5,0,4,1,2,3;$ majority weight $=2$.\\
$bb.\,\, 0,0,1,6,2,3,4,5;$ majority weight $=3$.\\
$bb.\,\, 0,2,1,0,3,4,5,6;$ majority weight $=4$.\\
$bb.\,\, 0,1,2,0,3,4,5,6;$ majority weight $=5$.\\
$bb.\,\, 0,0,6,1,3,2,4,5;$ majority weight $=6$.\\

\noindent $cc.\,\, 0,5,6,4,1,3,2,0;$ no majority weight.\\

\noindent $dd.\,\, 0,0,3,6,4,2,1,5;$ no majority weight.\\

\noindent $ee.\,\, 0,0,3,5,6,1,4,2;$ no majority weight.\\

\noindent $ff.\,\, 0,0,1,5,4,3,2,6;$ no majority weight.\\

\underline{\emph{List $2$:}}\

\noindent $T9.\,\, 0,4,3,1,2,5,6;$ minority label $=0$.\\
$T9.\,\, 0,6,0,2,3,4,5;$ minority label $=1$.\\
$T9.\,\, 0,0,6,5,1,3,4;$ minority label $=2$.\\
$T9.\,\, 0,2,0,1,6,4,5;$ minority label $=3$.\\
$T9.\,\, 0,0,5,2,3,1,6;$ minority label $=4$.\\
$T9.\,\, 0,1,0,3,4,2,6;$ minority label $=5$.\\
$T9.\,\, 0,0,1,2,3,4,5;$ minority label $=6$.\\

\noindent $T10.\,\, 0,1,2,3,4,5,6;$ minority label $=0$.\\
$T10.\,\, 0,0,2,3,4,5,6;$ minority label $=1$.\\
$T10.\,\, 0,0,1,3,4,5,6;$ minority label $=2$.\\
$T10.\,\, 0,0,1,2,4,5,6;$ minority label $=3$.\\
$T10.\,\, 0,0,1,2,3,5,6;$ minority label $=4$.\\
$T10.\,\, 0,0,1,2,3,4,6;$ minority label $=5$.\\
$T10.\,\, 0,0,1,2,3,4,5;$ minority label $=6$.\\

\noindent $T11.\,\, 0,1,4,2,5,6,3;$ minority label $=0$.\\
$T11.\,\, 0,3,4,0,6,2,5;$ minority label $=1$.\\
$T11.\,\, 0,0,3,4,6,5,1;$ minority label $=2$.\\
$T11.\,\, 0,1,4,0,5,6,2;$ minority label $=3$.\\
$T11.\,\, 0,2,5,0,1,3,6;$ minority label $=4$.\\
$T11.\,\, 0,2,4,0,3,6,1;$ minority label $=5$.\\
$T11.\,\, 0,0,3,4,2,5,1;$ minority label $=6$.\\

\noindent $T12.\,\, 0,5,4,1,2,3,6;$ minority label $=0$.\\
$T12.\,\, 0,0,6,3,4,5,2;$ minority label $=1$.\\
$T12.\,\, 0,0,6,1,4,5,3;$ minority label $=2$.\\
$T12.\,\, 0,0,6,1,2,5,4;$ minority label $=3$.\\
$T12.\,\, 0,0,5,1,2,3,6;$ minority label $=4$.\\
$T12.\,\, 0,0,1,2,3,6,4;$ minority label $=5$.\\
$T12.\,\, 0,0,5,2,3,4,1;$ minority label $=6$.\\

\noindent $T13.\,\, 0,1,5,3,4,2,6;$ minority label $=0$.\\
$T13.\,\, 0,5,2,3,6,4,0;$ minority label $=1$.\\
$T13.\,\, 0,6,1,3,5,4,0;$ minority label $=2$.\\
$T13.\,\, 0,6,4,2,5,1,0;$ minority label $=3$.\\
$T13.\,\, 0,6,1,3,5,0,2;$ minority label $=4$.\\
$T13.\,\, 0,6,1,3,4,0,2;$ minority label $=5$.\\
$T13.\,\, 0,1,5,3,4,2,0;$ minority label $=6$.\\

\noindent $T14.\,\, 0,1,6,2,4,5,3;$ minority label $=0$.\\
$T14.\,\, 0,3,6,2,4,5,0;$ minority label $=1$.\\
$T14.\,\, 0,3,5,1,4,6,0;$ minority label $=2$.\\
$T14.\,\, 0,2,5,1,4,6,0;$ minority label $=3$.\\
$T14.\,\, 0,2,6,1,3,5,0;$ minority label $=4$.\\
$T14.\,\, 0,1,6,2,3,4,0;$ minority label $=5$.\\
$T14.\,\, 0,0,5,2,3,4,1;$ minority label $=6$.\\

\underline{\emph{List $3$:}}\

\noindent $F6.\,\, 0,0,0,2,3,4,5,6,1;$ no majority weight.\\
\\root pair 0 and 1:\\
$F6.\,\, 0,1,2,0,6,1,3,4,5;$ majority weight $=0$.\\
$F6.\,\, 0,1,1,0,2,3,4,5,6;$ majority weight $=1$.\\
$F6.\,\, 0,1,0,1,2,3,4,5,6;$ majority weight $=3$.\\
$F6.\,\, 0,1,0,2,3,1,4,5,6;$ majority weight $=4$.\\
$F6.\,\, 0,1,0,3,4,1,2,5,6;$ majority weight $=5$.\\
$F6.\,\, 0,1,0,4,5,1,2,3,6;$ majority weight $=6$.\\
\\\noindent $F6.\,\, 0,2,6,0,3,1,2,4,5;$ no majority weight.\\
\noindent $F6.\,\, 0,3,2,0,1,3,4,5,6;$ no majority weight.\\
\noindent $F6.\,\, 0,4,5,0,6,1,2,3,4;$ no majority weight.\\
\\root pair 0 and 5:\\
$F6.\,\, 0,5,3,0,2,1,4,5,6;$ majority weight $=0$.\\
$F6.\,\, 0,5,1,4,6,0,2,3,5;$ majority weight $=1$.\\
$F6.\,\, 0,5,0,4,6,1,2,3,5;$ majority weight $=2$.\\
$F6.\,\, 0,5,6,0,5,1,2,3,4;$ majority weight $=3$.\\
$F6.\,\, 0,5,0,1,6,2,3,4,5;$ majority weight $=4$.\\
$F6.\,\, 0,5,0,5,6,1,2,3,4;$ majority weight $=5$.\\
$F6.\,\, 0,5,0,1,3,2,4,5,6;$ majority weight $=6$.\\
\\\noindent $F6.\,\, 0,6,4,0,2,1,3,5,6;$ no majority weight.\\

\noindent $F7.\,\, 0,0,0,4,5,6,1,2,3;$ no majority weight.\\
\noindent $F7.\,\, 0,1,6,0,2,4,1,3,5;$ no majority weight.\\
\\root pair 0 and 2:\\
$F7.\,\, 0,2,3,0,5,6,1,2,4;$ majority weight $=0$.\\
$F7.\,\, 0,2,3,0,4,6,1,2,5;$ majority weight $=1$.\\
$F7.\,\, 0,2,3,0,4,5,1,2,6;$ majority weight $=2$.\\
$F7.\,\, 0,2,3,4,5,6,0,1,2;$ majority weight $=3$.\\
$F7.\,\, 0,2,4,1,3,6,0,2,5;$ majority weight $=4$.\\
$F7.\,\, 0,2,4,0,3,5,1,2,6;$ majority weight $=5$.\\
$F7.\,\, 0,2,6,0,3,4,1,2,5;$ majority weight $=6$.\\
\\root pair 0 and 3:\\
$F7.\,\, 0,3,0,1,4,5,2,3,6;$ majority weight $=0$.\\
$F7.\,\, 0,3,0,2,5,6,1,3,4;$ majority weight $=1$.\\
$F7.\,\, 0,3,2,0,1,6,3,4,5;$ majority weight $=2$.\\
$F7.\,\, 0,3,3,1,4,5,0,2,6;$ majority weight $=3$.\\
$F7.\,\, 0,3,0,1,2,5,3,4,6;$ majority weight $=4$.\\
$F7.\,\, 0,3,0,2,3,6,1,4,5;$ majority weight $=5$.\\
$F7.\,\, 0,3,2,0,1,3,4,5,6;$ majority weight $=6$.\\
\\\noindent $F7.\,\, 0,4,3,0,1,2,4,5,6;$ no majority weight.\\
\noindent $F7.\,\, 0,5,2,0,3,6,1,4,5;$ no majority weight.\\
\noindent $F7.\,\, 0,6,1,0,3,5,2,4,6;$ no majority weight.\\

\noindent $F8.\,\, 0,0,5,3,2,1,4,6,0;$ no majority weight.\\
\noindent $F8.\,\, 0,1,2,3,0,5,1,6,4;$ no majority weight.\\
\noindent $F8.\,\, 0,2,2,3,6,5,1,0,4;$ no majority weight.\\
\noindent $F8.\,\, 0,3,1,2,3,4,5,0,6;$ no majority weight.\\
\noindent $F8.\,\, 0,4,3,4,2,6,1,5,0;$ no majority weight.\\
\noindent $F8.\,\, 0,5,2,5,6,4,3,1,0;$ no majority weight.\\
\noindent $F8.\,\, 0,6,1,2,0,4,5,3,6;$ no majority weight.\\

\noindent $F9.\,\, 0,0,4,0,6,1,5,3,2;$ no majority weight.\\
\noindent $F9.\,\, 0,1,6,5,3,1,2,4,0;$ no majority weight.\\
\noindent $F9.\,\, 0,2,6,4,3,1,2,5,0;$ no majority weight.\\
\noindent $F9.\,\, 0,3,2,4,3,5,6,1,0;$ no majority weight.\\
\noindent $F9.\,\, 0,4,1,2,6,3,4,5,0;$ no majority weight.\\
\noindent $F9.\,\, 0,5,4,1,0,2,5,6,3;$ no majority weight.\\
\noindent $F9.\,\, 0,6,2,4,0,5,6,1,3;$ no majority weight.\\

\noindent $F10.\,\, 0,0,1,3,4,5,6,0,2;$ no majority weight.\\
\noindent $F10.\,\, 0,1,2,0,4,5,3,1,6;$ no majority weight.\\
\noindent $F10.\,\, 0,2,5,4,2,3,0,1,6;$ no majority weight.\\
\noindent $F10.\,\, 0,3,6,1,3,2,4,0,5;$ no majority weight.\\
\noindent $F10.\,\, 0,4,1,6,4,5,3,0,2;$ no majority weight.\\
\noindent $F10.\,\, 0,5,2,1,5,6,3,0,4;$ no majority weight.\\
\noindent $F10.\,\, 0,6,3,0,4,5,2,1,6;$ no majority weight.\\

\noindent $F11.\,\, 0,0,0,3,2,5,1,4,6;$ no majority weight.\\
\noindent $F11.\,\, 0,1,3,4,5,6,0,1,2;$ no majority weight.\\
\noindent $F11.\,\, 0,2,4,0,2,1,3,6,5;$ no majority weight.\\
\noindent $F11.\,\, 0,3,4,5,6,1,2,0,3;$ no majority weight.\\
\noindent $F11.\,\, 0,4,1,2,3,4,5,0,6;$ no majority weight.\\
\noindent $F11.\,\, 0,5,3,0,5,6,4,1,2;$ no majority weight.\\
\noindent $F11.\,\, 0,6,4,3,2,1,0,5,6;$ no majority weight.\\

\noindent $F12.\,\, 0,0,0,5,2,3,4,1,6;$ no majority weight.\\
\noindent $F12.\,\, 0,1,3,5,6,1,4,0,2;$ no majority weight.\\
\noindent $F12.\,\, 0,2,2,1,5,4,6,3,0;$ no majority weight.\\
\noindent $F12.\,\, 0,3,5,0,3,1,2,4,6;$ no majority weight.\\
\noindent $F12.\,\, 0,4,2,0,4,5,6,3,1;$ no majority weight.\\
\noindent $F12.\,\, 0,5,4,3,1,2,5,0,6;$ no majority weight.\\
\noindent $F12.\,\, 0,6,5,4,3,1,2,0,6;$ no majority weight.\\
\\Root pair 0 and 0:\\
$F13.\,\, 0,0,0,5,1,3,4,6,2;$ majority weight $=0$.\\
$F13.\,\, 0,0,0,2,3,4,5,16,;$ majority weight $=1$.\\
$F13.\,\, 0,0,5,0,1,3,4,6,2;$ majority weight $=2$.\\
$F13.\,\, 0,0,4,0,1,2,5,6,3;$ majority weight $=3$.\\
$F13.\,\, 0,0,3,0,1,2,5,6,4;$ majority weight $=4$.\\
$F13.\,\, 0,0,2,0,34,6,1,5,;$ majority weight $=5$.\\
$F13.\,\, 0,0,1,0,2,3,4,5,6;$ majority weight $=6$.\\
\\\noindent $F13.\,\, 0,1,1,2,3,4,5,6,0;$ no majority weight.\\
\noindent $F13.\,\, 0,2,2,4,1,3,5,6,0;$ no majority weight.\\
\noindent $F13.\,\, 0,3,1,0,3,4,5,6,2;$ no majority weight.\\
\noindent $F13.\,\, 0,4,6,0,1,2,3,4,5;$ no majority weight.\\
\noindent $F13.\,\, 0,5,5,3,1,2,4,6,0;$ no majority weight.\\
\noindent $F13.\,\, 0,6,6,5,1,2,3,4,0;$ no majority weight.\\

\noindent $F14.\,\, 0,0,0,2,3,5,4,6,1;$ no majority weight.\\
\noindent $F14.\,\, 0,1,1,3,5,4,0,6,2;$ no majority weight.\\
\noindent $F14.\,\, 0,2,4,1,2,5,0,6,3;$ no majority weight.\\
\noindent $F14.\,\, 0,3,5,6,3,2,0,4,1;$ no majority weight.\\
\noindent $F14.\,\, 0,4,3,0,4,5,2,6,1;$ no majority weight.\\
\noindent $F14.\,\, 0,5,2,0,5,6,3,4,1;$ no majority weight.\\
\noindent $F14.\,\, 0,6,2,4,1,4,0,3,6;$ no majority weight.\\
\\root pair 0 and 0:\\
$F15.\,\, 0,0,0,1,2,3,4,5,6;$ majority weight $=0$.\\
$F15.\,\, 0,0,0,3,1,4,6,2,5;$ majority weight $=1$.\\
$F15.\,\, 0,0,0,5,1,2,3,4,6;$ majority weight $=2$.\\
$F15.\,\, 0,0,0,6,1,2,3,4,5;$ majority weight $=3$.\\
$F15.\,\, 0,0,0,5,2,3,4,1,6;$ majority weight $=4$.\\
$F15.\,\, 0,0,0,3,1,4,5,2,6;$ majority weight $=5$.\\
$F15.\,\, 0,0,0,1,2,3,6,4,5;$ majority weight $=6$.\\
\\\noindent $F15.\,\, 0,1,2,0,1,3,5,4,6;$ no majority weight.\\
\noindent $F15.\,\, 0,2,4,0,2,3,6,1,5;$ no majority weight.\\
\noindent $F15.\,\, 0,3,6,0,1,2,3,4,5;$ no majority weight.\\
\noindent $F15.\,\, 0,4,1,0,4,5,6,2,3;$ no majority weight.\\
\noindent $F15.\,\, 0,5,3,0,1,4,5,2,6;$ no majority weight.\\
\noindent $F15.\,\, 0,6,5,0,2,4,6,1,3;$ no majority weight.\\

\noindent $F16.\,\, 0,0,0,4,3,1,2,5,6;$ no majority weight.\\
\noindent $F16.\,\, 0,1,4,0,1,2,3,5,6;$ no majority weight.\\
\noindent $F16.\,\, 0,2,6,1,2,3,4,0,5;$ no majority weight.\\
\noindent $F16.\,\, 0,3,1,2,3,4,5,0,6;$ no majority weight.\\
\noindent $F16.\,\, 0,4,2,0,4,1,5,3,6;$ no majority weight.\\
\noindent $F16.\,\, 0,5,3,2,1,0,6,4,5;$ no majority weight.\\
\noindent $F16.\,\, 0,6,5,3,2,0,1,4,6;$ no majority weight.\\

\noindent $F17.\,\, 0,0,5,2,0,1,6,3,4;$ no majority weight.\\
\noindent $F17.\,\, 0,1,2,5,3,6,1,0,4;$ no majority weight.\\
\noindent $F17.\,\, 0,2,0,6,3,2,4,1,5;$ no majority weight.\\
\noindent $F17.\,\, 0,3,5,6,0,2,3,1,4;$ no majority weight.\\
\noindent $F17.\,\, 0,4,5,0,6,1,4,2,3;$ no majority weight.\\
\\root pair 0 and 5:\\
$F17.\,\, 0,5,5,0,6,3,2,4,1;$ majority weight $=0$.\\
$F17.\,\, 0,5,1,0,3,5,4,2,6;$ majority weight $=1$.\\
$F17.\,\, 0,5,0,6,4,3,2,1,5;$ majority weight $=2$.\\
$F17.\,\, 0,5,2,3,5,6,4,0,1;$ majority weight $=3$.\\
$F17.\,\, 0,5,5,0,6,1,4,2,3;$ majority weight $=4$.\\
$F17.\,\, 0,5,5,0,3,1,4,2,6;$ majority weight $=5$.\\
$F17.\,\, 0,5,0,2,1,3,6,4,5;$ majority weight $=6$.\\
\\\noindent $F17.\,\, 0,6,5,2,4,1,6,0,3;$ no majority weight.\\

\noindent $F18.\,\, 0,0,3,1,6,4,2,5,0;$ no majority weight.\\
\noindent $F18.\,\, 0,1,3,5,0,2,6,4,1;$ no majority weight.\\
\noindent $F18.\,\, 0,2,6,0,3,5,2,4,1;$ no majority weight.\\
\noindent $F18.\,\, 0,3,0,6,5,4,3,1,2;$ no majority weight.\\
\noindent $F18.\,\, 0,4,1,3,0,5,4,2,6;$ no majority weight.\\
\noindent $F18.\,\, 0,5,3,1,0,6,5,4,2;$ no majority weight.\\
\noindent $F18.\,\, 0,6,5,3,4,1,2,0,6;$ no majority weight.\\
\\root pair 0 and 0:\\
$F19.\,\, 0,0,0,6,5,1,3,4,2;$ majority weight $=0$.\\
$F19.\,\, 0,0,0,6,5,1,2,4,3;$ majority weight $=1$.\\
$F19.\,\, 0,0,0,6,5,1,2,3,4;$ majority weight $=2$.\\
$F19.\,\, 0,0,0,5,6,1,2,3,4;$ majority weight $=3$.\\
$F19.\,\, 0,0,0,4,6,1,2,3,5;$ majority weight $=4$.\\
$F19.\,\, 0,0,0,5,4,2,3,6,1;$ majority weight $=5$.\\
$F19.\,\, 0,0,0,6,5,2,3,4,1;$ majority weight $=6$.\\
\\\noindent $F19.\,\, 0,1,1,3,2,4,5,6,0;$ no majority weight.\\
\noindent $F19.\,\, 0,2,1,0,4,2,6,5,3;$ no majority weight.\\
\noindent $F19.\,\, 0,3,5,6,0,1,2,3,4;$ no majority weight.\\
\noindent $F19.\,\, 0,4,2,1,0,4,5,6,3;$ no majority weight.\\
\noindent $F19.\,\, 0,5,6,2,0,3,4,5,1;$ no majority weight.\\
\noindent $F19.\,\, 0,6,6,4,5,1,2,3,0;$ no majority weight.\\

\noindent $F20.\,\, 0,0,3,0,2,4,5,6,1;$ no majority weight.\\
\noindent $F20.\,\, 0,1,5,1,3,6,4,2,0;$ no majority weight.\\
\noindent $F20.\,\, 0,2,0,2,6,1,4,3,5;$ no majority weight.\\
\noindent $F20.\,\, 0,3,3,0,6,5,2,4,1;$ no majority weight.\\
\noindent $F20.\,\, 0,4,2,3,0,1,4,5,6;$ no majority weight.\\
\noindent $F20.\,\, 0,5,3,1,2,0,6,5,4;$ no majority weight.\\
\noindent $F20.\,\, 0,6,5,4,1,0,3,6,2;$ no majority weight.\\

\noindent $F21.\,\, 0,0,3,0,4,5,6,1,2;$ no majority weight.\\
\noindent $F21.\,\, 0,1,4,3,6,1,5,0,2;$ no majority weight.\\
\noindent $F21.\,\, 0,2,2,3,4,5,6,0,1;$ no majority weight.\\
\noindent $F21.\,\, 0,3,3,1,6,2,4,0,5;$ no majority weight.\\
\noindent $F21.\,\, 0,4,4,3,1,2,5,0,6;$ no majority weight.\\
\noindent $F21.\,\, 0,5,5,4,3,1,2,0,6;$ no majority weight.\\
\\root pair 0 and 6:\\
$F21.\,\, 0,6,6,5,4,1,2,0,3;$ majority weight $=0$.\\
$F21.\,\, 0,6,2,3,0,5,6,1,4;$ majority weight $=1$.\\
$F21.\,\, 0,6,6,0,4,2,3,1,5;$ majority weight $=2$.\\
$F21.\,\, 0,6,2,3,4,5,6,1,0;$ majority weight $=3$.\\
$F21.\,\, 0,6,6,4,1,3,5,0,2;$ majority weight $=4$.\\
$F21.\,\, 0,6,6,5,4,2,3,1,0;$ majority weight $=5$.\\
$F21.\,\, 0,6,2,3,5,4,6,1,0;$ majority weight $=6$.\\

\noindent $F22.\,\, 0,0,,2,4,5,6,3,0,1;$ no majority weight.\\
\noindent $F22.\,\, 0,1,3,0,1,2,4,5,6;$ no majority weight.\\
\noindent $F22.\,\, 0,2,5,1,2,3,4,6,0;$ no majority weight.\\
\noindent $F22.\,\, 0,3,0,2,3,4,1,5,6;$ no majority weight.\\
\noindent $F22.\,\, 0,4,2,3,4,5,6,1,0;$ no majority weight.\\
\noindent $F22.\,\, 0,5,5,2,3,4,6,0,1;$ no majority weight.\\
\noindent $F22.\,\, 0,6,0,3,4,5,6,1,0;$ no majority weight.\\

\noindent $F23.\,\, 0,0,0,2,3,5,4,6,1;$ no majority weight.\\
\noindent $F23.\,\, 0,1,2,3,5,4,6,0,1;$ no majority weight.\\
\\root pair 0 and 2:\\
$F23.\,\, 0,2,2,4,3,1,5,0,6;$ majority weight $=0$.\\
$F23.\,\, 0,2,1,0,4,6,5,3,2;$ majority weight $=1$.\\
$F23.\,\, 0,2,2,4,5,0,6,1,3;$ majority weight $=2$.\\
$F23.\,\, 0,2,2,4,3,1,5,6,0;$ majority weight $=3$.\\
$F23.\,\, 0,2,2,4,5,1,6,0,3;$ majority weight $=4$.\\
$F23.\,\, 0,2,2,4,5,3,6,1,0;$ majority weight $=5$.\\
$F23.\,\, 0,2,2,4,3,1,6,0,5;$ majority weight $=6$.\\
\\\noindent $F23.\,\, 0,3,5,0,1,3,2,4,6;$ no majority weight.\\
\noindent $F23.\,\, 0,4,2,0,5,4,6,3,1;$ no majority weight.\\
\noindent $F23.\,\, 0,5,4,1,5,3,6,0,2;$ no majority weight.\\
\noindent $F23.\,\, 0,6,5,4,1,3,2,0,6;$ no majority weight.\\

\noindent $F24.\,\, 0,0,0,3,4,6,1,2,5;$ no majority weight.\\
\noindent $F24.\,\, 0,1,4,0,1,3,5,6,2;$ no majority weight.\\
\noindent $F24.\,\, 0,2,4,6,1,3,5,0,2;$ no majority weight.\\
\noindent $F24.\,\, 0,3,4,0,3,5,6,1,2;$ no majority weight.\\
\noindent $F24.\,\, 0,4,2,0,6,5,4,1,3;$ no majority weight.\\
\noindent $F24.\,\, 0,5,3,1,6,4,2,0,5;$ no majority weight.\\
\noindent $F24.\,\, 0,6,3,0,6,4,2,1,5;$ no majority weight.\\

\noindent $F25.\,\, 0,0,3,1,2,5,6,0,4;$ no majority weight.\\
\noindent $F25.\,\, 0,1,1,3,4,0,6,2,5;$ no majority weight.\\
\noindent $F25.\,\, 0,2,4,6,1,0,3,2,5;$ no majority weight.\\
\noindent $F25.\,\, 0,3,0,1,3,4,5,6,2;$ no majority weight.\\
\noindent $F25.\,\, 0,4,0,6,4,2,3,1,5;$ no majority weight.\\
\noindent $F25.\,\, 0,5,2,0,6,3,4,1,5;$ no majority weight.\\
\noindent $F25.\,\, 0,6,6,4,3,0,1,5,2;$ no majority weight.\\

\noindent $F26.\,\, 0,0,0,4,5,3,1,6,2;$ no majority weight.\\
\noindent $F26.\,\, 0,1,2,4,3,6,0,1,5;$ no majority weight.\\
\noindent $F26.\,\, 0,2,4,6,0,1,3,5,2;$ no majority weight.\\
\noindent $F26.\,\, 0,3,6,0,1,3,4,5,2;$ no majority weight.\\
\noindent $F26.\,\, 0,4,1,0,6,4,3,2,5;$ no majority weight.\\
\noindent $F26.\,\, 0,5,3,1,0,6,4,2,5;$ no majority weight.\\
\noindent $F26.\,\, 0,6,5,3,4,1,0,6,2;$ no majority weight.\\

\noindent $F27.\,\, 0,0,1,2,3,4,6,0,5;$ no majority weight.\\
\noindent $F27.\,\, 0,1,1,4,5,6,3,0,2;$ no majority weight.\\
\noindent $F27.\,\, 0,2,3,5,6,0,1,4,2;$ no majority weight.\\
\noindent $F27.\,\, 0,3,5,6,0,1,3,4,2;$ no majority weight.\\
\noindent $F27.\,\, 0,4,2,6,0,1,4,3,5;$ no majority weight.\\
\noindent $F27.\,\, 0,5,1,2,4,6,5,0,3;$ no majority weight.\\
\noindent $F27.\,\, 0,6,6,1,2,3,4,0,5;$ no majority weight.\\

\noindent $F28.\,\, 0,0,5,2,1,6,3,0,4;$ no majority weight.\\
\noindent $F28.\,\, 0,1,6,2,3,5,1,4,0;$ no majority weight.\\
\noindent $F28.\,\, 0,2,2,6,4,1,0,3,5;$ no majority weight.\\
\noindent $F28.\,\, 0,3,5,0,1,4,6,3,2;$ no majority weight.\\
\noindent $F28.\,\, 0,4,2,0,6,3,1,4,5;$ no majority weight.\\
\noindent $F28.\,\, 0,5,4,5,2,0,6,3,1;$ no majority weight.\\
\noindent $F28.\,\, 0,6,4,5,1,0,6,3,2;$ no majority weight.\\

\noindent $F29.\,\, 0,0,6,4,5,1,0,2,3;$ no majority weight.\\
\noindent $F29.\,\, 0,1,1,5,6,4,0,2,3;$ no majority weight.\\
\\root pair 0 and 2:\\
$F29.\,\, 0,2,0,5,6,2,1,4,3;$ majority weight $=0$.\\
$F29.\,\, 0,2,1,0,6,3,2,4,5;$ majority weight $=1$.\\
$F29.\,\, 0,2,3,0,6,1,5,2,4;$ majority weight $=2$.\\
$F29.\,\, 0,2,3,0,6,1,2,4,5;$ majority weight $=3$.\\
$F29.\,\, 0,2,3,2,6,1,5,4,0;$ majority weight $=4$.\\
$F29.\,\, 0,2,0,3,6,2,1,4,5;$ majority weight $=5$.\\
$F29.\,\, 0,2,3,4,6,1,5,0,2;$ majority weight $=6$.\\
\\\noindent $F29.\,\, 0,3,5,3,4,6,2,1,0;$ no majority weight.\\
\noindent $F29.\,\, 0,4,6,0,1,4,5,2,3;$ no majority weight.\\
\noindent $F29.\,\, 0,5,6,2,5,3,1,0,4;$ no majority weight.\\
\noindent $F29.\,\, 0,6,5,1,2,4,0,3,6;$ no majority weight.\\

\noindent $F30.\,\, 0,0,6,4,2,1,5,0,3;$ no majority weight.\\
\noindent $F30.\,\, 0,1,1,3,6,2,0,4,5;$ no majority weight.\\
\noindent $F30.\,\, 0,2,1,4,2,0,5,3,6;$ no majority weight.\\
\noindent $F30.\,\, 0,3,2,3,6,0,5,1,4;$ no majority weight.\\
\noindent $F30.\,\, 0,4,4,6,1,3,0,5,2;$ no majority weight.\\
\noindent $F30.\,\, 0,5,3,0,6,4,2,1,5;$ no majority weight.\\
\noindent $F30.\,\, 0,6,4,3,1,0,5,6,2;$ no majority weight.\\

\noindent $F31.\,\, 0,0,2,3,4,6,0,1,5;$ no majority weight.\\
\noindent $F31.\,\, 0,1,3,5,6,1,0,4,2;$ no majority weight.\\
\noindent $F31.\,\, 0,2,2,5,6,4,0,1,3;$ no majority weight.\\
\noindent $F31.\,\, 0,3,0,5,6,3,2,1,4;$ no majority weight.\\
\noindent $F31.\,\, 0,4,0,5,6,1,4,3,2;$ no majority weight.\\
\noindent $F31.\,\, 0,5,1,5,6,4,2,3,0;$ no majority weight.\\
\noindent $F31.\,\, 0,6,1,5,6,2,4,0,3;$ no majority weight.\\

\noindent $F32.\,\, 0,0,1,0,5,2,4,3,6;$ no majority weight.\\
\noindent $F32.\,\, 0,1,2,3,6,5,0,1,4;$ no majority weight.\\
\noindent $F32.\,\, 0,2,6,0,5,4,1,3,2;$ no majority weight.\\
\noindent $F32.\,\, 0,3,5,0,1,2,4,6,3;$ no majority weight.\\
\noindent $F32.\,\, 0,4,5,0,4,1,3,6,2;$ no majority weight.\\
\noindent $F32.\,\, 0,5,1,0,3,2,6,5,4;$ no majority weight.\\
\noindent $F32.\,\, 0,6,5,4,1,2,0,6,3;$ no majority weight.\\

\noindent $F33.\,\, 0,0,1,2,3,5,6,0,4;$ no majority weight.\\
\noindent $F33.\,\, 0,1,1,5,6,3,4,0,2;$ no majority weight.\\
\noindent $F33.\,\, 0,2,4,5,6,2,1,0,3;$ no majority weight.\\
\noindent $F33.\,\, 0,3,1,0,6,4,3,2,5;$ no majority weight.\\
\noindent $F33.\,\, 0,4,0,5,6,4,1,2,3;$ no majority weight.\\
\noindent $F33.\,\, 0,5,3,1,2,5,6,0,4;$ no majority weight.\\
\noindent $F33.\,\, 0,6,6,1,2,4,3,0,5;$ no majority weight.\\

\noindent $F34.\,\, 0,0,0,3,6,4,2,1,5;$ no majority weight.\\
\noindent $F34.\,\, 0,1,4,3,0,1,2,5,6;$ no majority weight.\\
\noindent $F34.\,\, 0,2,6,2,5,4,3,0,1;$ no majority weight.\\
\noindent $F34.\,\, 0,3,6,0,2,5,1,3,4;$ no majority weight.\\
\noindent $F34.\,\, 0,4,0,5,4,3,2,1,6;$ no majority weight.\\
\noindent $F34.\,\, 0,5,5,0,3,4,1,2,6;$ no majority weight.\\
\noindent $F34.\,\, 0,6,3,4,0,6,5,1,2;$ no majority weight.\\

\noindent $F35.\,\, 0,0,0,3,2,5,4,6,1;$ no majority weight.\\
\noindent $F35.\,\, 0,1,2,3,4,5,6,0,1;$ no majority weight.\\
\noindent $F35.\,\, 0,2,5,4,0,2,1,3,6;$ no majority weight.\\
\noindent $F35.\,\, 0,3,4,5,0,3,2,1,6;$ no majority weight.\\
\noindent $F35.\,\, 0,4,1,2,3,5,4,0,6;$ no majority weight.\\
\noindent $F35.\,\, 0,5,2,3,0,5,6,1,4;$ no majority weight.\\
\noindent $F35.\,\, 0,6,5,4,3,2,1,0,6;$ no majority weight.\\

\noindent $F36.\,\, 0,0,3,0,4,2,5,6,1;$ no majority weight.\\
\noindent $F36.\,\, 0,1,3,2,4,5,6,0,1;$ no majority weight.\\
\noindent $F36.\,\, 0,2,0,4,5,6,2,3,1;$ no majority weight.\\
\noindent $F36.\,\, 0,3,3,0,4,6,5,2,1;$ no majority weight.\\
\noindent $F36.\,\, 0,4,1,6,4,5,3,0,2;$ no majority weight.\\
\noindent $F36.\,\, 0,5,2,1,5,6,3,0,4;$ no majority weight.\\
\noindent $F36.\,\, 0,6,3,0,4,5,2,1,6;$ no majority weight.\\

\noindent $F37.\,\, 0,0,5,0,1,4,2,6,3;$ no majority weight.\\
\noindent $F37.\,\, 0,1,3,4,6,0,2,5,1;$ no majority weight.\\
\noindent $F37.\,\, 0,2,4,0,1,3,5,2,6;$ no majority weight.\\
\noindent $F37.\,\, 0,3,3,6,4,2,0,5,1;$ no majority weight.\\
\noindent $F37.\,\, 0,4,2,3,6,0,5,1,4;$ no majority weight.\\
\noindent $F37.\,\, 0,5,3,0,6,4,2,5,1;$ no majority weight.\\
\noindent $F37.\,\, 0,6,4,3,1,0,5,2,6;$ no majority weight.\\

\noindent $F38.\,\, 0,0,2,3,4,6,1,5,0;$ no majority weight.\\
\noindent $F38.\,\, 0,1,1,2,3,6,0,5,4;$ no majority weight.\\
\noindent $F38.\,\, 0,2,2,5,6,4,1,3,0;$ no majority weight.\\
\noindent $F38.\,\, 0,3,0,5,6,3,1,4,2;$ no majority weight.\\
\noindent $F38.\,\, 0,4,0,1,2,4,6,3,5;$ no majority weight.\\
\noindent $F38.\,\, 0,5,4,0,1,6,3,5,2;$ no majority weight.\\
\noindent $F38.\,\, 0,6,1,4,5,6,0,2,3;$ no majority weight.\\

\noindent $F39.\,\, 0,0,1,3,2,4,5,0,6;$ no majority weight.\\
\noindent $F39.\,\, 0,1,3,2,4,5,0,1,6;$ no majority weight.\\
\noindent $F39.\,\, 0,2,5,4,0,1,3,6,2;$ no majority weight.\\
\noindent $F39.\,\, 0,3,0,3,6,4,2,1,5;$ no majority weight.\\
\noindent $F39.\,\, 0,4,0,4,1,3,5,6,2;$ no majority weight.\\
\noindent $F39.\,\, 0,5,2,3,0,6,4,1,5;$ no majority weight.\\
\noindent $F39.\,\, 0,6,3,2,1,5,4,6,0;$ no majority weight.\\

\noindent $F40.\,\, 0,0,2,5,3,6,1,4,0;$ no majority weight.\\
\\root pair 0 and 1:\\
$F40.\,\, 0,1,1,0,6,5,4,3,2,;$ majority weight $=0$.\\
$F40.\,\, 0,1,2,1,4,6,5,0,3;$ majority weight $=1$.\\
$F40.\,\, 0,1,0,6,1,2,4,3,5;$ majority weight $=2$.\\
$F40.\,\, 0,1,6,3,0,5,4,2,1;$ majority weight $=3$.\\
$F40.\,\, 0,1,5,2,3,1,6,4,0;$ majority weight $=4$.\\
$F40.\,\, 0,1,3,4,1,5,2,6,0;$ majority weight $=5$.\\
$F40.\,\, 0,1,5,6,0,2,3,4,1;$ majority weight $=6$.\\
\\\noindent $F40.\,\, 0,2,6,5,1,3,2,0,4;$ no majority weight.\\
\noindent $F40.\,\, 0,3,5,2,0,1,6,4,3;$ no majority weight.\\
\noindent $F40.\,\, 0,4,2,5,0,6,1,3,4;$ no majority weight.\\
\noindent $F40.\,\, 0,5,5,3,1,2,6,0,4;$ no majority weight.\\
\noindent $F40.\,\, 0,6,0,2,5,3,6,1,4;$ no majority weight.\\

\noindent $F41.\,\, 0,0,1,2,3,5,4,6,0;$ no majority weight.\\
\noindent $F41.\,\, 0,1,1,5,6,3,2,4,0;$ no majority weight.\\
\noindent $F41.\,\, 0,2,0,4,6,2,3,5,1;$ no majority weight.\\
\noindent $F41.\,\, 0,3,4,6,0,1,3,2,5;$ no majority weight.\\
\noindent $F41.\,\, 0,4,3,0,1,5,4,6,2;$ no majority weight.\\
\noindent $F41.\,\, 0,5,4,2,3,1,6,5,0;$ no majority weight.\\
\noindent $F41.\,\, 0,6,6,3,4,1,5,2,0;$ no majority weight.\\

\underline{\emph{List $4$:}}\

\noindent $gg.\,\, 0,3,4,6,1,5,2;$ minority weight $=1$.\\
\noindent $gg.\,\, 0,4,3,1,6,2,5;$ minority weight $=6$.\\

\noindent $hh.\,\, 0,4,3,6,1,5,2;$ minority weight $=0$.\\
\noindent $hh.\,\, 0,2,6,5,4,3,0;$ minority weight $=5$.\\

\noindent $ii.\,\, 0,4,3,1,2,5,6;$ minority weight $=0$.\\
\noindent $ii.\,\, 0,4,0,2,6,1,5;$ minority weight $=2$.\\

\noindent $jj.\,\, 0,5,4,1,2,3,6;$ minority weight $=2$.\\
\noindent $jj.\,\, 0,0,5,1,2,3,6;$ minority weight $=6$.\\

\noindent $kk.\,\, 0,5,4,1,2,6,3;$ minority weight $=1$.\\
\noindent $kk.\,\, 0,0,4,1,5,6,3;$ minority weight $=6$.\\

\noindent $\ell\ell.\,\, 0,3,2,4,6,1,5;$ minority weight $=0$.\\
\noindent $\ell\ell.\,\, 0,3,2,4,5,1,0;$ minority weight $=6$.\\

\noindent $mm.\,\, 0,0,3,4,2,5,1;$ minority weight $=6$.\\
\noindent $mm.\,\, 0,0,3,4,6,5,1;$ minority weight $=2$.\\

\noindent $nn.\,\, 0,4,3,5,2,1,6;$ minority weight $=0$.\\
\noindent $nn.\,\, 0,4,0,5,2,1,6;$ minority weight $=3$.\\

\underline{\emph{List $5$:}}\

\noindent$T5.\,\, 0,3,5,1,2,4;$ minority labels: 0 and 6.\\
\noindent$T5.\,\, 0,2,1,4,3,0;$ minority labels: 5 and 6.\\
\noindent$T5.\,\, 0,1,0,5,3,6;$ minority labels: 2 and 4.\\
\noindent$T5.\,\, 0,0,4,5,6,2;$ minority labels: 1 and 3.\\

\noindent$T6.\,\, 0,4,2,1,5,3;$ minority labels: 0 and 6.\\
\noindent$T6.\,\, 0,4,6,3,0,5;$ minority labels: 1 and 2.\\
\noindent$T6.\,\, 0,1,6,2,5,0;$ minority labels: 3 and 4.\\
\noindent$T6.\,\, 0,1,6,2,0,4;$ minority labels: 3 and 5.\\

\noindent$T7.\,\, 0,3,1,5,4,2;$ minority labels: 0 and 6.\\
\noindent$T7.\,\, 0,5,3,0,6,4;$ minority labels: 1 and 2.\\
\noindent$T7.\,\, 0,2,5,0,6,1;$ minority labels: 3 and 4.\\
\noindent$T7.\,\, 0,2,4,0,6,1;$ minority labels: 3 and 5.\\

\noindent$T8.\,\, 0,4,1,5,3,2;$ minority labels: 0 and 6.\\
\noindent$T8.\,\, 0,6,3,0,5,4;$ minority labels: 1 and 2.\\
\noindent$T8.\,\, 0,2,6,5,0,1;$ minority labels: 3 and 4.\\
\noindent$T8.\,\, 0,2,6,4,0,1;$ minority labels: 3 and 5.\\

\underline{\emph{List $6$:}}\

\noindent Labels for $F2$:\\
\\\noindent Root pair 0 and 0:\\
\noindent $\,\,0,0,4,3,1,6,5,2;$ minority weight $=1$.\\
\noindent $\,\,0,0,4,3,1,5,6,0;$ minority weight $=2$.\\

\noindent Root pair 0 and 1:\\
\noindent $\,\,0,1,5,6,1,4,0,2;$ minority weight $=4$.\\
\noindent $\,\,0,1,4,0,1,3,5,2;$ minority weight $=2$.\\

\noindent Root pair 0 and 2:\\
\noindent $\,\,0,2,4,0,1,3,5,2;$ minority weight $=1$.\\
\noindent $\,\,0,2,4,6,1,3,5,2;$ minority weight $=3$.\\

\noindent Root pair 0 and 3:\\
\noindent $\,\,0,3,5,0,1,4,6,2;$ minority weight $=2$.\\
\noindent $\,\,0,3,5,0,1,4,6,3;$ minority weight $=1$.\\

\noindent Root pair 0 and 4:\\
\noindent $\,\,0,4,6,1,4,0,5,2;$ minority weight $=4$.\\
\noindent $\,\,0,4,6,1,4,3,5,2;$ minority weight $=1$.\\

\noindent Root pair 0 and 5:\\
\noindent $\,\,0,5,0,1,3,4,6,2;$ minority weight $=4$.\\
\noindent $\,\,0,5,4,3,1,6,5,2;$ minority weight $=3$.\\

\noindent Root pair 0 and 6:\\
\noindent $\,\,0,6,4,3,1,0,5,2;$ minority weight $=1$.\\
\noindent $\,\,0,6,2,4,3,0,5,1;$ minority weight $=0$.\\

\noindent Labels for $F3$:\\
\\\noindent Root pair 0 and 0:\\
\noindent $\,\,0,0,3,2,1,6,4,5;$ minority weight $=0$.\\
\noindent $\,\,0,0,4,0,6,5,2,3;$ minority weight $=6$.\\

\noindent Root pair 0 and 1:\\
\noindent $\,\,0,1,0,4,3,2,5,6;$ minority weight $=4$.\\
\noindent $\,\,0,1,0,4,3,2,5,1;$ minority weight $=2$.\\

\noindent Root pair 0 and 2:\\
\noindent $\,\,0,2,4,0,1,3,5,6;$ minority weight $=1$.\\
\noindent $\,\,0,2,1,3,5,0,4,2;$ minority weight $=4$.\\

\noindent Root pair 0 and 3:\\
\noindent $\,\,0,3,1,2,3,4,0,6;$ minority weight $=0$.\\
\noindent $\,\,0,3,1,2,3,5,0,6;$ minority weight $=6$.\\

\noindent Root pair 0 and 4:\\
\noindent $\,\,0,4,1,0,6,5,3,4;$ minority weight $=6$.\\
\noindent $\,\,0,4,1,2,3,5,0,6;$ minority weight $=5$.\\

\noindent Root pair 0 and 5:\\
\noindent $\,\,0,5,3,0,6,4,1,2;$ minority weight $=6$.\\
\noindent $\,\,0,5,3,0,5,4,1,2;$ minority weight $=2$.\\

\noindent Root pair 0 and 6:\\
\noindent $\,\,0,6,3,0,1,2,4,6;$ minority weight $=1$.\\
\noindent $\,\,0,6,3,2,1,0,4,6;$ minority weight $=6$.\\

\noindent Labels for $F4$:\\
\\\noindent Root pair 0 and 0:\\
\noindent $\,\,0,0,4,3,1,2,6,5;$ minority weight $=1$.\\
\noindent $\,\,0,0,0,6,2,1,5,4;$ minority weight $=3$.\\
 
\noindent Root pair 0 and 1:\\
\noindent $\,\,0,1,4,0,1,2,3,5;$ minority weight $=2$.\\
\noindent $\,\,0,1,1,5,4,3,2,6;$ minority weight $=3$.\\

\noindent Root pair 0 and 2:\\
\noindent $\,\,0,2,4,6,1,2,3,5;$ minority weight $=3$.\\
\noindent $\,\,0,2,4,0,1,2,3,5;$ minority weight $=1$.\\

\noindent Root pair 0 and 3:\\
\noindent $\,\,0,3,5,0,1,2,4,6;$ minority weight $=2$.\\
\noindent $\,\,0,3,5,0,1,3,4,6;$ minority weight $=0$.\\

\noindent Root pair 0 and 4:\\
\noindent $\,\,0,4,2,0,6,5,3,1;$ minority weight $=5$.\\
\noindent $\,\,0,4,2,0,6,4,3,1;$ minority weight $=0$.\\

\noindent Root pair 0 and 5:\\
\noindent $\,\,0,5,0,5,1,2,6,4;$ minority weight $=5$.\\
\noindent $\,\,0,5,4,3,1,2,6,5;$ minority weight $=3$.\\

\noindent Root pair 0 and 6:\\
\noindent $\,\,0,6,5,4,1,2,0,6;$ minority weight $=2$.\\
\noindent $\,\,0,6,5,4,1,3,0,6;$ minority weight $=0$.\\

\noindent Labels for $F5$:\\
\\\noindent Root pair 0 and 0:\\
\noindent $\,\,0,0,4,5,2,3,1,6;$ minority weight $=3$.\\
\noindent $\,\,0,0,0,5,2,3,1,6;$ minority weight $=4$.\\

\noindent Root pair 0 and 1:\\
\noindent $\,\,0,1,3,4,5,6,1,2;$ minority weight $=1$.\\
\noindent $\,\,0,1,3,1,5,6,4,2;$ minority weight $=4$.\\

\noindent Root pair 0 and 2:\\
\noindent $\,\,0,2,0,5,6,4,1,3;$ minority weight $=3$.\\
\noindent $\,\,0,2,0,5,6,2,1,4;$ minority weight $=4$.\\

\noindent Root pair 0 and 3:\\
\noindent $\,\,0,3,4,5,6,3,1,2;$ minority weight $=3$.\\
\noindent $\,\,0,3,0,5,6,3,1,2;$ minority weight $=4$.\\

\noindent Root pair 0 and 4:\\
\noindent $\,\,0,4,4,5,6,3,1,2;$ minority weight $=2$.\\
\noindent $\,\,0,4,6,5,0,1,3,2;$ minority weight $=3$.\\

\noindent Root pair 0 and 5:\\
\noindent $\,\,0,5,4,6,5,1,3,2;$ minority weight $=1$.\\
\noindent $\,\,0,5,1,6,5,4,3,2;$ minority weight $=4$.\\

\noindent Root pair 0 and 6:\\
\noindent $\,\,0,6,4,5,2,3,1,0;$ minority weight $=3$.\\
\noindent $\,\,0,6,4,5,2,6,1,0;$ minority weight $=0$.\\

\underline{\emph{List $7$:}}\

\noindent oo.$\,\,0,4,3,1,5,2;$ minority weights: 2 and 6.\\
 oo.$\,\,0,0,2,4,1,5;$ minority weights: 1 and 5.\\

\noindent pp.$\,\,0,3,2,1,5,4;$ minority weights: 1 and 5.\\
 pp.$\,\,0,4,3,1,6,5;$ minority weights: 0 and 6.\\

\noindent qq.$\,\,0,3,4,1,5,2;$ minority weights: 1 and 2.\\
 qq.$\,\,0,4,3,6,2,5;$ minority weights: 5 and 6.\\

\noindent rr.$\,\,0,4,3,5,1,2;$ minority weights 2 and 5.\\
 rr.$\,\,0,5,4,6,2,3;$ minority weights: 0 and 6.\\

\noindent ss.$\,\,0,3,4,2,1,5;$ minority weights: 1 and 2.\\
 ss.$\,\,0,1,2,4,5,6;$ minority weights: 4 and 5.\\

\noindent tt.$\,\,0,4,5,1,2,3;$ minority weights: 2 and 3.\\
 tt.$\,\,0,2,3,1,4,5;$ minority weights: 5 and 6.\\

\noindent uu.$\,\,0,3,2,4,5,1;$ minority weights: 1 and 6.\\
 uu.$\,\,0,4,2,5,6,1;$ minority weights: 0 and 3.\\

\noindent vv.$\,\,0,3,4,5,1,2;$ minority weights: 1 and 2.\\
 vv.$\,\,0,4,3,2,5,6;$ minority weights: 5 and 6.\\

\noindent ww.$\,\,0,2,3,4,5,1;$ minority weights: 0 and 1.\\
 ww.$\,\,0,3,4,5,1,6;$ minority weights: 2 and 6.\\

\noindent xx.$\,\,0,1,2,3,4,5;$ minority weights: 0 and 6.\\
 xx.$\,\,0,6,0,1,2,3;$ minority weights: 4 and 5.\\

\underline{\emph{List $8$:}}\

\noindent $T1.\,\, 0,3,2,1,4;$ minority labels 0, 5, and 6. \\
$T1.\,\, 0,0,5,6,4;$ minority labels 1, 2, and 3.\\
$T1.\,\, 0,0,5,6,3;$ minority labels 1, 2, and 4.\\

\noindent $T2.\,\, 0,4,1,2,3;$ minority labels 0, 5, and 6.\\
$T2.\,\, 0,0,4,5,6;$ minority labels 1, 2, and 3.\\
$T2.\,\, 0,0,3,5,6;$ minority labels 2, 3, and 4.\\

\noindent $T3.\,\, 0,0,3,1,2;$ minority labels 4, 5, and 6.\\
$T3.\,\, 0,0,4,5,6;$ minority labels 1, 2, and 3.\\
$T3.\,\, 0,1,4,5,6;$ minority labels 0, 2, and 3.\\

\noindent $T4.\,\, 0,0,3,2,1;$ minority labels 4, 5, and 6.\\
$T4.\,\, 0,6,3,4,5;$ minority labels 0, 1, and 2.\\
$T4.\,\, 0,6,2,4,5;$ minority labels 0, 1, and 3.\\

\underline{\emph{List $9$:}}\

\noindent Labels for $F1$:\\
\\\noindent Root pair 0 and 0:\\
\noindent $\,\,0,0,0,3,2,1,4;$ minority weights 1 and 6.\\
\noindent $\,\,0,0,2,0,6,5,1;$ minority weights 3 and 4.\\
\\\noindent Root pair 0 and 1:\\
\noindent $\,\,0,1,3,4,1,2,5;$ minority weights 1 and 2.\\
\noindent $\,\,0,1,3,0,1,2,5;$ minority weights 5 and 6.\\
\\\noindent Root pair 0 and 2:\\
\noindent $\,\,0,2,4,3,2,0,1;$ minority weights 0 and 2.\\
\noindent $\,\,0,2,2,5,4,0,1;$ minority weights 3 and 4.\\
\\\noindent Root pair 0 and 3:\\
\noindent $\,\,0,3,0,3,2,1,4;$ minority weights 1 and 3.\\
\noindent $\,\,0,3,0,5,4,1,2;$ minority weights 2 and 5.\\
\\\noindent Root pair 0 and 4:\\
\noindent $\,\,0,4,4,2,1,0,3;$ minority weights 0 and 1.\\
\noindent $\,\,0,4,4,2,3,0,1;$ minority weights 3 and 5.\\
\\\noindent Root pair 0 and 5:\\
\noindent $\,\,0,5,4,2,1,0,3;$ minority weights 1 and 6.\\
\noindent $\,\,0,5,0,4,1,2,3;$ minority weights 3 and 4.\\
\\\noindent Root pair 0 and 6:\\
\noindent $\,\,0,6,4,2,1,0,3;$ minority weights 0 and 6.\\
\noindent $\,\,0,6,0,4,1,2,3;$ minority weights 2 and 4.\\

\medskip